\newcommand{\footrecall}[1]{%
}
\def\mapright#1{\smash{\mathop{\longrightarrow}\limits^{#1}}}
\def\tra#1{\smash{\mathop{\mid\kern
-1pt\joinrel\relbar\joinrel\relbar}\limits^{*}_{#1}}}
\def\longtra#1{\smash{\mathop{\mid\kern
-1pt\joinrel\relbar\joinrel\relbar\joinrel\relbar}\limits^{*}_{#1}}}
\def\vlongtra#1{\smash{\mathop{\mid\kern
-1pt\joinrel\relbar\joinrel\relbar\joinrel\relbar\joinrel\relbar}\limits^{*}_{#1}}}
\def\vvlongtra#1{\smash{\mathop{\mid\kern
-1pt\joinrel\relbar\joinrel\relbar\joinrel\relbar\joinrel\relbar\joinrel\relbar}\limits^{*}_{#1}}}
\def\vvvlongtra#1{\smash{\mathop{\mid\kern
-1pt\joinrel\relbar\joinrel\relbar\joinrel\relbar\joinrel\relbar\joinrel\relbar\joinrel\relbar}\limits^{*}_{#1}}}
\def\etra#1{\smash{\mathop{\mid\kern
-1pt\joinrel\relbar\joinrel\relbar}\limits_{#1}}}
\def\mapleft#1{\smash{\mathop{\longleftarrow}\limits^{#1}}}
\def\vlongrightarrow{\relbar\joinrel\longrightarrow}
\def\longmapright#1{\smash{\mathop{\vlongrightarrow}\limits^{#1}}}
\newcommand{\Mon}{\mbox{\rm Mon}}
\def\A{{\cal{A}}}
\def\iff{\Leftrightarrow}
\def\Rw{\Rightarrow}
\def\oo{\overline}
\def\wt{\widetilde}
\def\wh{\widehat}
\def\B{{\cal{B}}}
\def\C{{\cal{C}}}
\def\im{\mbox{Im}\,}
\def\cl{\mbox{Cl}\,}
\def\cay{\mbox{Cay}}
\def\ker{\mbox{Ker}\,}
\def\mt{\mbox{MT}}
\def\fold{\mbox{Fold}}
\def\P{{\cal{P}}}
\def\R{\mathrel{{\mathcal R}}}
\def\p{\varphi}
\def\inv{^{-1}}
\def\la{\langle}
\def\ra{\rangle}
\def\J{{\cal{J}}}
\def\bi{\begin{itemize}}
\def\ei{\end{itemize}}
\def\beq{\begin{equation}}
\def\eeq{\end{equation}}
\titleformat*{\section}{\large\bfseries}
\titleformat*{\subsection}{\normalsize \bfseries}
\title{Eraser morphisms and membership problem in groups and monoids}
\author[1]{Daniele D'Angeli \thanks{daniele.dangeli@unicusano.it}}
\author[2]{Emanuele Rodaro \thanks{emanuele.rodaro@gmail.com}}
\author[3]{Pedro V. Silva \thanks{pvsilva@fc.up.pt}}
\author[4]{Alexander Zakharov \thanks{zakhar.sasha@gmail.com}}
\affil[1]{Universita degli Studi Niccolo Cusano - Via Don Carlo Gnocchi, 3, 00166 
Roma, Italy}
\affil[2]{Department of Mathematics, Politecnico di Milano, Piazza Leonardo da Vinci, 32, 20133 Milano, Italy}
\affil[3]{Centre of Mathematics, University of Porto, R. Campo Alegre 687, 4169-007 Porto,
Portugal}
\affil[4]{Chebyshev Laboratory, St Petersburg State University, 14th Line 29B, Vasilyevsky Island, St.Petersburg, 199178, Russia, and The Russian Foreign Trade Academy, 4a Pudovkina street, 119285, Moscow, Russia.}
\date{}
\begin{document}

\newtheorem{theorem}{Theorem}[section]
\newtheorem{lemma}[theorem]{Lemma}
\newtheorem{question}[theorem]{Question}
\newtheorem{proposition}[theorem]{Proposition}
\newtheorem{corollary}[theorem]{Corollary}

\theoremstyle{definition}
\newtheorem{definition}[theorem]{Definition}
\newtheorem{example}[theorem]{Example}

\newcommand{\ophi}{\overline{\varphi}}
\newcommand{\opsi}{\overline{\psi}}

\maketitle

%
%
%
%

%
%
%
%

\begin{abstract}
We develop the theory of fragile words by introducing the concept of eraser morphism and extending the concept to more general contexts such as (free) inverse monoids. We characterize the image of the eraser morphism in the free group case,  and show that it has decidable membership problem. We establish several algorithmic properties of the class of finite-$\J$-above (inverse) monoids. We prove that the image of the eraser morphism in the free inverse monoid case (and more generally, in the finite-$\J$-above case) has decidable membership problem, and relate its kernel to the free group fragile words. 
\end{abstract}

\section{Introduction}

In \cite{puzzles}, Demaine {\em et al.} introduced the concept of a {\it fragile word} in the context of free groups: a word $w \in FG_A$ is called fragile if it is in the kernel of the canonical projection $\epsilon_a:FG_A \to FG_{A\setminus \{ a\}}$ for every  $a \in A$. One can systematically build fragile words with the help of nested commutators, which provide a quadratic upper bound for the length of the shortest nontrivial fragile words on $n$ letters.

There is a geometric interpretation of fragile words in terms of {\it Brunnian links}, i.e. collections of $n$ loops that are linked but such that the removal of any loop unlinks the rest, see \cite{puzzles} and \cite{GG}. An example is the famous Borommean link (Borommean rings), which is a Brunnian link with 3 loops.
It was proved in \cite{GG} that the quadratic bound is actually the best possible, i.e. the shortest non-trivial fragile word in a free group of rank $n$ has quadratic length in $n$, given by a precise formula, and in particular appropriate nested commutators are the shortest non-trivial fragile words, for each $n$. The proof in \cite{GG} uses the terminology of Brunnian links, with fragile words of shortest length corresponding to minimal Brunnian links, but, in fact, the proof is purely combinatorial.  

In \cite{puzzles}  Demaine {\em et al.} also established connections with the nice combinatorial problem of {\em picture-hanging puzzles}. In the picture-hanging puzzle we are to hang a picture on a wall using a string and $n$ nails, so that the string loops around $n$ nails and the removal of any nail results in a fall of the picture. Since the fundamental group of a plane with $n$ points (nails) removed is isomorphic to the free group of rank $n$, there is a one-to-one correspondence between words in a free group of rank $n$ and different ways to hang a picture on $n$ nails, considered up to homotopy, with fragile words corresponding precisely to the hangings such that the removal of any nail results in a fall of the picture. 
We refer to \cite{puzzles} for more details about such interpretation, and information about more general picture-hanging puzzles.

On the other hand, the first two authors established in \cite{fragile} a new motivation. There exists a connection with the theory of automaton groups, through the freeness problem: given an invertible transducer with the extended alphabet, the shortest nontrivial relator (if the automaton group is not free) is always given by a fragile word.

The present paper aims at developing the theory of fragile words in two directions:
\begin{itemize}
\item
Introducing and studying the concept of eraser morphism.
\item
Extending the concept of fragile word to more general contexts such as (free) inverse monoids.
\end{itemize} 

The eraser morphism $\epsilon:FG_A \to \prod_{a \in A} FG_{A\setminus \{ a\}}$ is defined as $\epsilon = \prod_{a \in A} \epsilon_a$ and its kernel is constituted by the fragile words of $FG_A$. We can generalize this notion to a group defined by a fixed finite presentation. We note that such erasing morphisms have been considered early in combinatorial group theory. A celebrated example is Magnus' Freiheitssatz, stating that in the case of a one-relator group, the image of each $\epsilon_a$ is free on $A \setminus \{ a\}$ \cite{frei}. It is only natural that, since the kernel of the eraser morphism $\epsilon$ consists of the fragile words, one considers also its image. One of the main results of this paper characterizes the image of $\epsilon$ in the free group case,  and shows that it has decidable membership problem.

Inverse monoids generalize groups in the following sense: as groups are, up to isomorphism, sets of permutations on a fixed set, closed under composition and inverse functions, inverse monoids are, up to isomorphism, sets of partial injective transformations on a fixed set, closed under compositions and inverse functions, and containing the identity. This makes inverse monoids virtually ubiquitous in areas such as geometry or topology. We also discuss fragile words and eraser morphisms in the context of inverse monoids. In order to do so, we study the (good) algorithmic properties of the class of finite-$\J$-above (inverse) monoids (which include a solvable word problem), a subject of independent interest. 

Among other results, we prove that the image of $\epsilon$ in the free inverse monoid case (and more generally, in the finite-$\J$-above case) has decidable membership problem, and relate its kernel to the free group fragile words. We also consider finiteness conditions such as having finite $\R$-classes, and relate the property in the original inverse monoid with the corresponding quotients featuring the eraser morphism.

The paper is organized as follows: in Section 2 we present background concepts and results, including the main tools of geometric inverse semigroup theory and some rudiments of automata and language theory. Section 3 is devoted to the algorithmic properties of finite-$\J$-above (inverse) monoids. Finally, in Section 4 we study fragile words and the eraser morphism for both free groups and subclasses of inverse monoids.

\section{Preliminaries}

\subsection{Automata}\label{sec: automata}

Let $A$ be a finite set (usually called an {\em alphabet}). An $A$-{\em automaton} is a quadruple of the form $\A = (Q,I,T,E)$, where:
\bi
\item
$Q$ is a set (states),
\item
$I,T \subseteq Q$ (initial and terminal states, respectively),
\item
$E \subseteq Q \times A \times Q$ (transitions).
\ei
The automaton $\A$ is finite if $Q$ is finite. A {\em path} in $\A$ is a sequence of the form
\beq
\label{path}
q_0 \mapright{a_1} q_1 \mapright{a_2} \ldots \mapright{a_n} q_n
\eeq
with $n \geq 1$ and $(q_{j-1},a_j,q_j) \in E \cup \{ (q,1,q) \mid q \in Q\}$ for $j = 1,\ldots,n$. Its {\em label} is the word $a_1a_2\ldots a_n \in A^*$. The path (\ref{path}) is {\em successful} if $q_0 \in I$ and $q_n \in T$. The {\em language} of $\A$, denoted by $L(\A)$, is the set of labels of all successful paths in $\A$. The automaton $\A$ is {\em trim} if every vertex occurs in some successful path.

A subset of $A^*$ is an $A$-{\em language}. An $A$-language $L$ is {\em rational} if $L = L(\A)$ for some finite $A$-automaton $\A$. We may always assume that a rational language is recognized by a finite deterministic automaton. An $A$-automaton $\A = (Q,I,T,E)$ is {\em deterministic} if $|I| = 1$ and 
$$(p,a,q), (p,a,q') \in E \, \Rw \, q = q'.$$
We summarize in the following result the properties of rational languages which are relevant for us:

\begin{proposition}
\label{rat}
\bi
\item[(i)] The set of rational $A$-languages is closed under the boolean operations and all the constructions involved are effective.
\item[(ii)] Given rational $A$-languages $L,L'$, it is decidable whether or not: $L = \emptyset$, $L = L'$.
\ei
\end{proposition}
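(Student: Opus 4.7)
The plan is to invoke the standard toolkit of finite-automata constructions; the result is entirely classical, and the only thing to flag is the choice of explicit algorithms to justify the word ``effective''.

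For part (i), given finite $A$-automata $\A$ and $\A'$ recognizing $L$ and $L'$, I would exhibit finite automata recognizing $L \cup L'$, $L \cap L'$, and the complement $A^* \setminus L$; the remaining boolean combinations follow by composition, since $L \setminus L' = L \cap (A^* \setminus L')$ and the symmetric difference is $(L \setminus L') \cup (L' \setminus L)$. Union is handled by taking a disjoint union of $\A$ and $\A'$ and declaring the initial (resp.\ terminal) states of the new automaton to be the union of those of $\A$ and $\A'$. For intersection and complement, first pass to deterministic automata via the subset construction, which is effective. The intersection is then built as the usual product automaton: states are pairs, transitions are componentwise, and a pair is terminal if and only if both components are. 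The complement is obtained by completing the deterministic automaton (adding a sink state and the missing transitions) and then swapping the terminal and non-terminal states. Each step produces a finite automaton from the input data in finitely many elementary operations.

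For part (ii), emptiness of $L(\A)$ reduces to a reachability check in the underlying directed graph of $\A$: $L(\A) = \emptyset$ if and only if no terminal state is reachable from an initial state, which is decidable by breadth-first search. Equivalence of two rational languages then reduces to emptiness through the identity $L = L'$ if and only if $(L \setminus L') \cup (L' \setminus L) = \emptyset$, and by part (i) this symmetric difference is the language of an effectively constructible finite automaton, to which the emptiness test can then be applied.

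There is no substantial obstacle to overcome here; the only ingredient that is not completely trivial is the subset construction used for determinization, which is well understood, and its potential exponential blow-up in the number of states does not affect any of the decidability claims. I would simply cite a standard reference in automata theory for the details of these constructions and verify that each one is effective in the sense required.
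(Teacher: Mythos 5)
Your argument is correct and is exactly the classical one (product automaton, subset construction, complementation by swapping accepting states, emptiness via reachability, equivalence via symmetric difference) that the paper implicitly relies on: the paper states this proposition as standard background without proof, deferring to the automata-theory literature. Nothing is missing, and citing a standard reference such as Hopcroft--Ullman for the constructions is precisely what is expected here.
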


A {\it morphism} $\varphi$ from an $A$-automaton $\A_1=(Q_1,I_1,T_1,E_1)$ to an $A$-automaton $\A_2=(Q_2,I_2,T_2,E_2)$ is a pair of maps
$\varphi: Q_1 \rightarrow Q_2$ and $\varphi: E_1 \rightarrow E_2$,
which satisfy the three properties: $\varphi(I_1) \subseteq I_2$, $\varphi(T_1) \subseteq T_2$, and $\varphi(p,a,q)=(\varphi(p),a,\varphi(q))$ for every $(p,a,q) \in E_1$.

Two $A$-automata $\A = (Q,I,T,E)$ and $\A' = (Q',I',T',E')$ are {\em isomorphic} if there exists a bijection $\p:Q \to Q'$ such that $I\p = I'$, $T\p = T'$ and 
$$(p,a,q) \in E \iff (p\p,a,q\p) \in E'$$
holds for all $p,q \in Q$ and $a \in A$. 

Given an alphabet $A$, we define a set $A^{-1} = \{ a^{-1} \mid a \in A\}$ of formal inverses of $A$ and write $\wt{A} = A\cup A\inv$. An $\wt{A}$-automaton $\A = (Q,I,T,E)$ is {\em involutive} if
$$(p,a,q) \in E \iff (q,a\inv,p) \in E$$
holds for all $p,q \in Q$ and $a \in A$. If $|T| = 1$ and $\A$ is also deterministic and trim, 
we call it an {\em inverse} automaton. Inverse automata are known to be minimal, which implies they are fully determined (up to isomorphism) by their language.

Let $\A = (Q,I,T,E)$ be a finite involutive $\wt{A}$-automaton. We define a finite involutive $\wt{A}$-automaton $\fold(\A)$ by successively identifying distinct vertices $q,q'$ whenever there exist edges
$$q \mapleft{a} p \mapright{a} q'$$
with $a \in \wt{A}$. This procedure establishes a confluent algorithm which terminates after finitely many steps, producing a deterministic automaton (inverse if $|I| = |T| = 1$ and the underlying graph is connected).

\subsection{Rational and recognizable subsets}
	
	Suppose $M$ is a finitely generated monoid. In analogy to rational languages in a free monoid, one can define the set of {\it rational subsets} in $M$ to consist of all the subsets of $M$ which can be obtained from the finite ones by taking unions of two subsets, products of two subsets and Kleene star of a subset (i.e., passing to the submonoid generated by the given subset). In particular, rational subsets of free monoids are precisely the rational languages, and a subgroup of a group is a rational subset if and only if it is finitely generated (due to Anisimov and Seifert \cite[Theorem III.2.7]{Ber}). Let $\alpha: A^* \rightarrow M$ be a surjective homomorphism, for some finite alphabet $A$. It turns out that a subset $K$ of $M$ is rational if and only if there exists a rational language $L$ in $A^*$ such that $K=\alpha(L)$, see \cite[Proposition 1.7, p.223]{Sak}; this property is sometimes taken as the definition of a rational subset.
	\par 
	 We say that $X \subseteq M$ is a {\em recognizable subset} of $M$ if there exists a homomorphism $\theta:M \to K$ to some finite monoid $K$ satisfying $X = \theta^{-1}\theta(X)$.  Given a surjective homomorphism  $\alpha: A^* \rightarrow M$, it turns out that a subset $X$ of $M$ is recognizable if and only if $\alpha^{-1}(X)$ is a rational language \cite[Theorem 2.2, p.247]{Sak}. 
	 A subgroup of a group is a recognizable subset if and only if it has finite index \cite[Proposition 6.1, p.302]{Sak}. \par 
	 Equivalently, a subset $X$ of $M$ is recognizable if and only if the {\em syntactic congruence} $\sim_X$ has finite index (i.e., there are finitely many equivalence classes); this is the congruence on $M$ defined by $u \sim_X v$ if
$$\forall p,q \in M\, (puq \in X \Leftrightarrow pvq \in X);$$ 
see \cite[Theorem 2.3, p.247]{Sak}.
 	Let ${\rm Rat}(M)$ (respectively ${\rm Rec}(M)$) denote the set of all rational (respectively recognizable) subsets of $M$. For a finitely generated monoid $M$ we always have ${\rm Rec}(M) \subseteq {\rm Rat}(M)$ \cite[Proposition III.2.4]{Ber}. 


\subsection{Inverse monoids and Sch\"utzenberger automata}\label{im}

A monoid $M$ is said to be {\em inverse} if it satisfies
$$\forall a \in M\, \exists ! b \in M\; (aba = a \, \wedge\, bab = b).$$
The element $b$ is called the inverse of $a$ and is denoted by $a^{-1}$. The identity of $M$ is denoted by $1$ as usual.

The following alternative characterizations of inverse monoids are well known:

\begin{proposition}
\label{acim}
The following conditions are equivalent for a monoid $M$:
\begin{itemize}
\item[(i)] $M$ is inverse;
\item[(ii)] the idempotents of $M$ commute and $M$ satisfies
$$\forall a \in M\,  \exists b \in M\; aba = a.$$
\item[(iii)] $M$ is isomorphic (as a monoid) to some monoid of partial injective functions containing the inverse functions of its elements.
\end{itemize}
\end{proposition}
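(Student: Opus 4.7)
The plan is to prove the cycle (iii) $\Rightarrow$ (i) $\Rightarrow$ (ii) $\Rightarrow$ (i), together with (i) $\Rightarrow$ (iii) via the Vagner--Preston representation. The implication (iii) $\Rightarrow$ (i) is essentially automatic: for a partial injection $f$, the functional inverse of $f$, defined on the image of $f$, is the unique $g$ satisfying $fgf=f$ and $gfg=g$, so any monoid of partial injections closed under inverses and containing the identity is inverse in the sense of (i).

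For (i) $\Rightarrow$ (ii), existence of $b$ with $aba=a$ is immediate. The real content is commutativity of idempotents, and I would proceed in two steps. First, any idempotent $e$ is self-inverse: from $eee=e$ and uniqueness of inverses, $e^{-1}=e$. Second, given idempotents $e,f$ and $a=ef$, I would verify directly that $fa^{-1}e$ satisfies the two inverse axioms for $a$; by uniqueness, $a^{-1}=fa^{-1}e$, and hence $(ef)^2=(ef)(fa^{-1}e)(ef)=aa^{-1}a=a=ef$, so $ef$ is idempotent. A symmetric argument gives $fe$ idempotent, and a direct check shows that $fe$ satisfies the inverse axioms for $ef$; by uniqueness, $(ef)^{-1}=fe$. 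Since $ef$ is itself idempotent and hence self-inverse, $ef=fe$.

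For (ii) $\Rightarrow$ (i), I would first upgrade any $b$ with $aba=a$ to a strong inverse $b'=bab$ satisfying both $ab'a=a$ and $b'ab'=b'$; the idempotence of $ab$ and $ba$, which follows at once from $aba=a$, reduces the verification to a short string of equalities. Given two such strong inverses $b,c$ of $a$, the idempotents $ba$ and $ca$ commute, yielding
\[
b = bab = b(aca)b = (ba)(ca)b = (ca)(ba)b = c(aba)b = cab,
\]
and symmetrically $c=bac$. Using also that the idempotents $ab$ and $ac$ commute, $ab=a(cab)=(ac)(ab)=(ab)(ac)=a(bac)=ac$, whence $b=bab=bac=c$.

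Finally, for (i) $\Rightarrow$ (iii), I would invoke the Vagner--Preston representation: send $a \in M$ to the partial map $\rho_a : a^{-1}aM \to aa^{-1}M$ given by $x \mapsto ax$. Routine checks show that $\rho_a$ is a well-defined partial injection with $(\rho_a)^{-1}=\rho_{a^{-1}}$, that $a \mapsto \rho_a$ is an injective monoid homomorphism, and that the image contains the identity and is closed under inverses. The main obstacle I anticipate is the cleverness needed in (i) $\Rightarrow$ (ii): the non-obvious application of uniqueness of the inverse to force $a^{-1}=fa^{-1}e$ for $a=ef$, which then unlocks commutativity of idempotents; the rest of the proof amounts to careful bookkeeping.
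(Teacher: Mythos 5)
The paper does not prove this proposition at all --- it is stated as a well-known fact (it is the classical characterization of inverse monoids, proved e.g.\ via the Vagner--Preston theorem in standard references). Your argument is the standard textbook proof and is correct: the computation forcing $(ef)^{-1}=fa^{-1}e$ and then $(ef)^{-1}=fe$ for (i)$\Rightarrow$(ii), the upgrade $b\mapsto bab$ and the two commuting-idempotent computations for (ii)$\Rightarrow$(i), and the Vagner--Preston representation for (i)$\Rightarrow$(iii) all check out (the only cosmetic caveat being that with $x\mapsto ax$ on $a^{-1}aM$ one must compose partial maps ``right factor first'' for $a\mapsto\rho_a$ to be a homomorphism rather than an anti-homomorphism).
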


One of the features of inverse monoids is the existence of the so-called {\em natural partial order}, a partial order compatible with both product and inversion. If $M$ is an inverse monoid and $E(M) = \{ e \in M \mid e^2 = e\}$, the natural partial order on $M$ is defined by
$$u \leq v\mbox{ if $u = ev$ for some }e \in E(M).$$ 
Equivalently, $u \leq v$ if and only if $u = uu\inv v$.

The class of inverse monoids constitutes a variety ${\cal{I}}$ for the signature $(\cdot, \, ^{-1},1)$ (with arities 2,1,0 respectively). We describe next the free objects of ${\cal{I}}$.

Given a set $A$, recall the notation $\wt{A} = A\cup A\inv$, where $A\inv$ denotes a set of formal inverses of $A$.
We extend the mapping defined $a \mapsto a\inv$ $(a \in A)$ to an involution $\inv$ of $\wt{A}$, which is subsequently extended to an involution of $\wt{A}$ through
$$1\inv = 1,\quad (ua)\inv = a\inv u\inv\; (u \in \wt{A}^*, a\in \wt{A}).$$ 
The {\em free inverse monoid} on a set $A$ can be described as the quotient $FIM_A = \wt{A}^*/\rho_A$, where $\rho_A$ (the {\em Vagner congruence}) denotes the congruence on $\wt{A}^*$ generated by the relation
$$\{ (ww\inv w,w) \mid w \in \wt{A}^*\} \cup \{ (uu\inv vv\inv, vv\inv uu\inv ) \mid u,v \in \wt{A}^*\}.$$
The word problem for $FIM_A$ was solved (independently) in the 1970's by Munn and Scheiblich. We describe next Munn's solution: given $w \in \wt{A}^*$, say $w = a_1\ldots a_n$ $(a_i \in \wt{A})$, we define ${\rm Lin}(w)$ (the linear automaton of $w$) by taking:
\bi
\item
all the prefixes of $w$ as states;
\item
1 and $w$ as initial and terminal states, respectively;
\item
transitions $a_1\ldots a_{j-1} \mapright{a_j} a_1\ldots a_j$ and $a_1\ldots a_{j-1} \mapleft{a_j\inv} a_1\ldots a_j$ for $j = 1,\ldots,n$.
\ei
Thus ${\rm Lin}(w)$ looks like
$$\to 1 \mapright{a_1} a_1 \mapright{a_2} a_1a_2 \mapright{a_3} \ldots \mapright{a_n} a_1\ldots a_n = w \to$$
where we omit the opposite edges (${\rm Lin}(w)$ is involutive).
The {\em Munn tree} of $w$ is defined as
$$\mt(w) = \fold({\rm Lin}(w)).$$
An alternative construction uses the concept of Cayley graph of the free group $FG_A$. If $M$ is a monoid generated by a subset $A$, the {\em Cayley graph} $\cay_A(M)$ has the elements of $M$ as vertices and edges of the form $m \mapright{a} ma$ for all $m \in M$ and $a \in A$. Then the underlying graph of $\mt(w)$ can be described as the subgraph of $\cay_{\wt{A}}(FG_A)$ spanned by the path $1 \mapright{w} w \in G$, considering also the opposite edges. Then we take 1 and $w \in G$ as initial and terminal vertices, respectively.

The word problem for $FIM_A$ is now solved through the equivalence
$$u \rho_A v \; \iff\; \mt(u) \cong \mt(v),$$
which holds for all $u,v \in \wt{A}^*$.

Every inverse monoid  is isomorphic to some quotient of some free inverse monoid. Technically, a presentation of inverse monoids is a formal expression of the form ${\rm Inv}\langle A \mid R\rangle$, where $A$ is a set and $R \subseteq \wt{A}^* \times \wt{A}^*$. The inverse monoid defined by this presentation is $\wt{A}^*/(\rho_A \cup R)^{\sharp}$ (we denote by $R^{\sharp}$ the congruence on a monoid $M$ generated by a relation $R \subseteq M \times M$).

The standard technique to studying inverse monoid presentations involves the strongly connected components of the Cayley graph, known as Sch\"utzenberger graphs. The most common definition involves the Green $\cal{R}$-relation. Given an inverse monoid $M$ and $u,v \in M$, we write $u {\cal{R}} v$ if $uu\inv = vv\inv$. Equivalently, $u{\cal{R}}v$ if and only if there exist $x,y \in M$ such that $u = vx$ and $v = uy$. This is an equivalence relation on $M$. 

Assume now that $M$ is defined by the presentation Inv$\langle A \mid R\rangle$, and write $\tau = (\rho_A \cup R)^{\sharp}$. It is known that:
\bi
\item
the edge $u\tau \mapright{a} (ua)\tau$ of $\cay_{\wt{A}}(M)$ admits an opposite edge if and only if $u\tau \,{\cal{R}} \, (ua)\tau$;
\item
the strongly connected components of $\cay_{\wt{A}}(M)$ are the subgraphs induced by the $\cal{R}$-classes of $M$.
\ei
Given $w \in \wt{A}^*$, the {\em Sch\"utzenberger automaton} of $w$ (with respect to the above presentation) is obtained from the Sch\"utzenberger graph containing $u\tau$ by setting $(uu\inv)\tau$ (respectively $u\tau$) as the unique initial (respectively terminal) state. Note that $x \, {\cal{R}} xx\inv$ holds in every inverse monoid. We denote the Sch\"utzenberger automaton of $w$ by $\A(w)$, omitting therefore the presentation when it is clear from the context. Note that Sch\"utzenberger automata are always inverse automata. The following results illustrate the role played by Sch\"utzenberger automata on solving the word problem of the inverse monoid $M$, see \cite{Steph90} and \cite{Steph98}:

\begin{proposition}\label{prop: isomorphic}
\label{order}
Let {\rm Inv}$\langle A \mid R\rangle$ be a presentation and write $\tau =  (\rho_A \cup R)^{\sharp}$. For every $u \in \wt{A}^*$, $L(\A(u)) = \{ v \in \wt{A}^* \mid v\tau \geq u\tau \}$.
\end{proposition}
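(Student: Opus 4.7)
The plan is to translate both sides of the claimed equality into assertions inside the inverse monoid $M = \wt{A}^*/\tau$. Using $u = uu^{-1}u$, the relation $v\tau \geq u\tau$ is equivalent, by the definition of the natural partial order, to $(uu^{-1}v)\tau = u\tau$. On the other hand, $\A(u)$ is the subautomaton of $\cay_{\wt{A}}(M)$ induced by the $\R$-class of $u\tau$, with $(uu^{-1})\tau$ and $u\tau$ as initial and terminal states respectively. Since reading $v = a_1 \cdots a_n$ from $(uu^{-1})\tau$ in the Cayley graph ends at $(uu^{-1}v)\tau$, membership of $v$ in $L(\A(u))$ is equivalent to the conjunction of $(uu^{-1}v)\tau = u\tau$ with the requirement that every intermediate vertex $(uu^{-1}a_1\cdots a_i)\tau$ lie in the $\R$-class of $u\tau$.

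The inclusion $L(\A(u)) \subseteq \{v : v\tau \geq u\tau\}$ follows immediately: the endpoint condition alone already yields $v\tau \geq u\tau$. For the reverse inclusion, I assume $(uu^{-1}v)\tau = u\tau$ and verify, for every prefix $p$ of $v$, that $(uu^{-1}p)\tau \R u\tau$. By the idempotent-equality characterization of $\R$, this amounts to proving $(uu^{-1}p)(uu^{-1}p)^{-1} = uu^{-1}$ in $M$. Computing the inverse and using commutativity of idempotents together with $(uu^{-1})^2 = uu^{-1}$ reduces the left-hand side to $uu^{-1}pp^{-1}$, so the target identity becomes $uu^{-1}pp^{-1} = uu^{-1}$, equivalently $uu^{-1} \leq pp^{-1}$ in the idempotent order.

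To establish this, I factor $v = pq$ and apply the hypothesis $(uu^{-1}pq)\tau = u\tau$. Evaluating $(uu^{-1}pq)(uu^{-1}pq)^{-1}$ from both sides of this equality and reordering commuting idempotents yields $uu^{-1} = uu^{-1}\cdot pqq^{-1}p^{-1}$ in $M$. Since $qq^{-1} \leq 1$ and the natural partial order on an inverse monoid is compatible with multiplication, $pqq^{-1}p^{-1} \leq pp^{-1}$, whence $uu^{-1} = uu^{-1}\cdot pqq^{-1}p^{-1} \leq uu^{-1}pp^{-1}$, forcing $uu^{-1}pp^{-1} = uu^{-1}$ as required.

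The main obstacle is the bookkeeping of idempotent rearrangements in $M$: each simplification must be justified by the identities $e^2 = e$, $ef = fe$ on idempotents, and $(xy)^{-1} = y^{-1}x^{-1}$, together with compatibility of $\leq$ with products, rather than by naive manipulation of words in $\wt{A}^*$. Once one commits to working in $M$ throughout and tracks how $uu^{-1}$ and $pp^{-1}$ interact as commuting idempotents, the argument is routine.
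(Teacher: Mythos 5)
Your argument is correct. Note that the paper does not prove this proposition at all --- it is quoted from Stephen's papers --- so what you have produced is a self-contained direct proof rather than a variant of an argument in the text. Your route is the natural one: you use that $\A(u)$ is the subautomaton of $\cay_{\wt{A}}(M)$ induced by the $\R$-class of $u\tau$, so that $v=a_1\cdots a_n$ lies in $L(\A(u))$ exactly when $(uu\inv v)\tau=u\tau$ and every vertex $(uu\inv a_1\cdots a_i)\tau$ stays in that $\R$-class; the forward inclusion is then immediate from $u\leq v \iff u=uu\inv v$, and for the converse you correctly reduce the prefix condition to $uu\inv pp\inv=uu\inv$ and derive it from $uu\inv\leq pqq\inv p\inv\leq pp\inv$. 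The two small points you gloss over are both fine but worth making explicit: the simplification $(uu\inv pq)(uu\inv pq)\inv = uu\inv\, pqq\inv p\inv$ uses that $pqq\inv p\inv$, being a conjugate of the idempotent $qq\inv$, is itself an idempotent commuting with $uu\inv$; and the final ``forcing'' step is just the fact that for idempotents $e\leq f$ is equivalent to $ef=e$ (or antisymmetry of $\leq$ together with $uu\inv pp\inv\leq uu\inv$). Stephen's own treatment obtains this statement within his machinery of approximating $\A(u)$ by expansions and foldings; your argument is more elementary and uses nothing beyond the definition of the Schützenberger automaton, the characterization $x\R y\iff xx\inv=yy\inv$, and compatibility of the natural partial order with multiplication, all of which are stated in the paper's preliminaries, so it would serve as an adequate proof if the paper chose to include one.
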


\begin{proposition}
\label{schutz}
Let {\rm Inv}$\langle A \mid R\rangle$ be a presentation and write $\tau =  (\rho_A \cup R)^{\sharp}$. 
The following conditions are equivalent for all $u,v \in \wt{A}^*$:
\bi
\item[(i)] $u \: \tau \: v$;
\item[(ii)] $\A(u) \cong \A(v)$;
\item[(iii)] $L(\A(u)) = L(\A(v))$;
\item[(iv)] $u \in L(\A(v))$ and $v \in L(\A(u))$.
\ei
\end{proposition}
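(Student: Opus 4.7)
My plan is to cycle through the implications using Proposition \ref{order} as the main engine, together with the fact (recalled at the end of Section~\ref{sec: automata}) that inverse automata are minimal and hence fully determined up to isomorphism by their language.

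First I would establish (i)$\Rightarrow$(iii). If $u\,\tau\,v$ then $u\tau = v\tau$, so by Proposition \ref{order}
\[
L(\A(u)) = \{w \in \wt{A}^* \mid w\tau \geq u\tau\} = \{w \in \wt{A}^* \mid w\tau \geq v\tau\} = L(\A(v)).
\]
The implication (iii)$\Rightarrow$(ii) is then immediate: two inverse automata with the same language must be isomorphic, since inverse automata are minimal. Conversely, (ii)$\Rightarrow$(iii) holds because isomorphic automata have the same language.

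Next I would handle (iii)$\Rightarrow$(iv), which is trivial once we observe that $u \in L(\A(u))$ and $v \in L(\A(v))$; indeed, both follow from Proposition \ref{order} since $u\tau \geq u\tau$ and $v\tau \geq v\tau$.

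Finally I would prove (iv)$\Rightarrow$(i), which closes the cycle. From $u \in L(\A(v))$ Proposition \ref{order} yields $u\tau \geq v\tau$, and from $v \in L(\A(u))$ it yields $v\tau \geq u\tau$. By antisymmetry of the natural partial order on the inverse monoid defined by Inv$\langle A \mid R\rangle$, we conclude $u\tau = v\tau$, i.e.\ $u\,\tau\,v$. I do not expect any real obstacle here, since every step reduces to a direct application of Proposition \ref{order} plus a standard property (minimality of inverse automata, antisymmetry of $\leq$); the only subtle point is invoking the minimality of inverse automata to deduce isomorphism from language equality, but this has already been recorded in the preliminaries.
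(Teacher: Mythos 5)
Your argument is correct. The paper itself gives no proof of this proposition --- it is quoted as a known result from Stephen's work (\cite{Steph90}, \cite{Steph98}) --- so there is no in-paper argument to compare against; but your derivation is a clean, self-contained one: the cycle (i)$\Rightarrow$(iii)$\Rightarrow$(iv)$\Rightarrow$(i) follows directly from Proposition \ref{order} (using reflexivity and antisymmetry of the natural partial order), and (ii)$\Leftrightarrow$(iii) follows from the minimality of inverse automata recorded in the preliminaries. The only thing worth noting is that (i)$\Rightarrow$(ii) is in fact immediate from the definition of the Sch\"utzenberger automaton, since $\A(u)$ depends only on $u\tau$; your route through languages is equally valid and keeps everything anchored to Proposition \ref{order}.
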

In \cite{Steph90} it is provided a (confluent) iterative procedure for constructing the Sch\"utzenberger automaton relative to a given finite presentation ${\rm Inv}\langle A \mid R\rangle$ of a word $w\in \wt{A}^*$ via two operations. One that we have already seen in Section~\ref{sec: automata}, that is the folding operation on an involutive $\wt{A}$-automaton. The other operation is called \emph{the elementary expansion}. The elementary expansion applied to an involutive $\wt{A}$-automaton $\B$ consists in adding a path $q_1\mapright{u}q_2$ to $\B$ wherever $q_1\mapright{v}q_2$ is a path in $\B$ and $(u,v)\in R$ or $(v,u) \in R$. Following \cite{Steph98} we may define for any involutive $\wt{A}$-automaton $\B$ a directed system consisting of all objects that can be obtained by an arbitrary sequence of elementary expansions and folding operations. This directed system is a downwardly directed commutative diagram, hence it has a colimit (see \cite{Steph98}). Accordingly to \cite{Steph98} we call this colimit the \emph{closure} of $\B$ with respect to $R$ and it is denoted by $\cl_{R}(\B)$. In the same paper it is shown that $\cl_R({\rm Lin}(w))$ is the Sch\"utzenberger automaton $\A(w)$ of the word $w\in\wt{A}^*$ with respect to ${\rm Inv}\langle A \mid R\rangle$. In particular, since for the free inverse monoid there are no elementary expansions to perform, we get $\cl_R({\rm Lin}(w))=\fold({\rm Lin}(w))$, hence we immediately deduce the result of Munn: $\mt(w) = \fold({\rm Lin}(w))$. 

The following result shows that in case a Sch\"utzenberger automaton is finite, the confluent sequence devised by Stephen is finite and so this automaton is effectively constructible. This result is known, but we include a proof for lack of an adequate reference.

\begin{proposition}
\label{cons}
Let {\rm Inv}$\langle A \mid R\rangle$ be a finite presentation and let $u \in \wt{A}^*$ be such that $\A(u)$ is finite. Then
$\A(u)$ is effectively constructible.
\end{proposition}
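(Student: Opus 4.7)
The plan is to run Stephen's iterative construction in a fair, scheduled way and to halt as soon as the current automaton has caught up with the colimit. Concretely, set $\B_0 = {\rm Lin}(u)$ and, inductively, let $\B_{n+1} = \fold(\B_n^{\rm exp})$, where $\B_n^{\rm exp}$ is obtained from $\B_n$ by adjoining, for every $(r,s) \in R$ and every path $q_1 \mapright{s} q_2$ in $\B_n$, a fresh path $q_1 \mapright{r} q_2$ (and symmetrically with the roles of $r$ and $s$ exchanged). Since $R$ and $\B_n$ are both finite, each round adjoins only finitely many vertices and edges and is effectively computable, so every $\B_n$ is a finite involutive $\wt{A}$-automaton, and the sequence $(\B_n)$ is a cofinal chain in Stephen's directed system generated by ${\rm Lin}(u)$.

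The algorithm computes $\B_0, \B_1, \B_2, \ldots$ and halts at the first $n$ for which the natural morphism $\B_n \to \B_{n+1}$ is an isomorphism, returning $\B_n$. Once this happens, one further round reproduces $\B_n$ up to isomorphism, so the tail is constant and $\B_n$ is the colimit of that tail; by confluence of Stephen's system this colimit coincides with $\cl_R({\rm Lin}(u)) = \A(u)$, whence $\B_n \cong \A(u)$. The test $\B_n \cong \B_{n+1}$ is effective because both are finite inverse automata, which are minimal for their languages, so the isomorphism test reduces to equality of rational languages, decidable by Proposition~\ref{rat}.

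Termination relies on the hypothesis that $\A(u)$ is finite. The canonical morphisms $\B_n \to \A(u)$ supplied by the colimit are compatible with the chain, and since every vertex and edge of $\A(u)$ is created at some finite stage of Stephen's construction and $\A(u)$ has only finitely many of each, these morphisms become surjective from some $N$ onwards. But a surjective morphism between inverse (in particular deterministic and trim) $\wt{A}$-automata sharing a distinguished initial and terminal state is forced to be an isomorphism, because in a deterministic trim automaton each vertex is determined by a label of a path from the initial state. Hence $\B_n \cong \A(u)$ for $n \geq N$, and the stopping criterion triggers.

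The main obstacle lies in this last step: showing that the round-based, non-canonical chain really catches up with $\A(u)$ in finitely many steps rather than merely approximating it. The argument rests crucially on confluence of Stephen's directed system (from the cited references) — which makes the scheduled chain $(\B_n)$ cofinal — together with the minimality of inverse automata among deterministic trim involutive $\wt{A}$-automata with the same language, which upgrades eventual surjectivity of the colimit morphism to eventual isomorphism.
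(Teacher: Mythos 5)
Your overall scheme is the same as the paper's: run the full-expansion-plus-folding sequence starting from ${\rm Lin}(u)$, halt when one round changes nothing, and test this via language equality using minimality of inverse automata; your argument that the output is $\A(u)$ once the criterion triggers is also fine. The genuine gap is in the termination argument. You invoke the claim that a surjective morphism between inverse automata with matching initial and terminal states must be an isomorphism, ``because in a deterministic trim automaton each vertex is determined by a label of a path from the initial state.'' This is false: determinism of the source only says that a word read from the initial state reaches at most one vertex; it does not prevent two distinct vertices, reached by different words, from having the same image. The automaton $1 \mapright{a} 2$ (initial $1$, terminal $2$) maps onto the one-vertex automaton with an $a$-loop, a surjective non-injective morphism of inverse automata. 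Worse, this is exactly what happens along Stephen's sequence: the canonical morphism $\B_n \to \A(u)$ is typically not injective, because expansions performed after stage $n$ create paths whose folding identifies old vertices of $\B_n$. For instance, for ${\rm Inv}\langle a,b,c \mid ab=c\rangle$ and $u=abc^{-1}$, the Munn tree $\mt(u)$ (four vertices) maps onto the finite automaton $\A(u)$ (three vertices, with $a:0\to 1$, $b:1\to 2$, $c:0\to 2$) surjectively on vertices and edges and preserving initial and terminal states, yet it is not isomorphic to $\A(u)$: its endpoints are identified only at the next stage. So eventual surjectivity of $\B_n \to \A(u)$ does not yield $\B_n \cong \A(u)$, and your proof that the stopping criterion is ever reached collapses.

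What is needed is an argument that the languages actually stabilize. The paper proves this by fixing, for each of the finitely many states $q$ of $\A(u)$, a word $v_q$ reading from the base point to $q$, observing that the finitely many words $v_t$ and $v_pav_q\inv v_t$ (one for each edge $(p,a,q)$) all lie in some $L(\A_m(u))$, and then using determinism and involutivity of $\A_m(u)$ to cancel factors $xx\inv$ and conclude $L(\A(u)) \subseteq L(\A_m(u))$; minimality of inverse automata then gives $\A_m(u) \cong \A(u)$, so the criterion triggers. Your colimit approach can be repaired, but it needs a second stabilization step beyond surjectivity: in a directed colimit, two vertices of $\B_N$ with the same image in $\A(u)$ become identified at some finite later stage, so, $\B_N$ being finite, there is $M \geq N$ at which the image of $\B_N$ inside $\B_M$ maps bijectively to $\A(u)$; by determinism this image is a subautomaton of $\B_M$ isomorphic to $\A(u)$ containing the initial and terminal states, whence $L(\A(u)) \subseteq L(\B_M) \subseteq L(\A(u))$ and minimality finishes. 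As written, that step is missing, and the lemma you use in its place is not true.
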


\begin{proof}

The Stephen sequence \cite{Steph90} is a sequence $(\A_n(u))_n$ of finite inverse $\wt{A}$-automata with the following properties:
\bi
\item[(P1)]
$\A_1(u) = \mt(u)$;
\item[(P2)]
$\A_n(u)$ is effectively constructible from $R$ and $\A_{n-1}(u)$ for each $n \geq 2$. Namely, $\A_{n}(u)$ is obtained from $\A_{n-1}(u)$ by applying simultaneously all possible instances of elementary expansions and foldings.
\item[(P3)]
$L(\A_1(u)) \subseteq L(\A_2(u)) \subseteq L(\A_3(u)) \subseteq \ldots$
\item[(P4)]
$\displaystyle
L(\A(u)) = \bigcup_{n \geq 1} L(\A_n(u))$.
\ei
We show that, in our case, there exists some $m \geq 1$ such that $L(\A(u)) = L(\A_m(u))$. Indeed, assume that $\A(u) = (Q,q_0,t,E)$ (with $Q$ finite). For every $q \in Q$, fix a path $q_0 \mapright{v_q} q$ for some $v_q \in \wt{A}^*$ (existence follows from $\A(u)$ being trim). We may assume that $v_{q_0} = 1$. Now $v_t \in L(\A(u))$, and for every $(p,a,q) \in E$ we have $v_pav_q\inv v_t \in L(\A(u))$. Since $E$ is finite and in view of (P4), there exists some $m \geq 1$ such that
\beq
\label{cons1}
\{ v_t \} \cup \{ v_pav_q\inv v_t \mid (p,a,q) \in E \} \subseteq L(\A_m(u)).
\eeq
Also by (P4), we have $L(\A_m(u)) \subseteq L(\A(u))$. Conversely, let $w \in L(\A(u))$. Then there exists a path
$$q_0 \mapright{a_1} q_1 \mapright{a_2}  \ldots \mapright{a_n} q_n = t$$
in $\A(u)$, with $a_1,\ldots,a_n \in \wt{A}$ and $w = a_1\ldots a_n$. Let $q'_0$ and $t'$ denote the initial and terminal vertices of $\A_m(u)$, respectively. Write $z_i = v_{q_{i-1}}a_iv_{q_i}\inv v_t$ for $i = 1,\ldots,n$.
Since $\A(u)$ is inverse, it follows easily from (\ref{cons1}) that there exists in $\A_m(u)$ a path of the form
$$q'_0 \mapright{z_1} t' \mapright{v_t\inv} q_0 \mapright{z_2} t' \mapright{v_t\inv} \ldots \mapright{z_{n-1}} t' \mapright{v_t\inv} q'_0 \mapright{z_n} t'.$$
Thus
\beq
\label{cons2}
\begin{array}{l}
v_{q_{0}}a_1v_{q_1}\inv v_tv_t\inv v_{q_{1}}a_2v_{q_2}\inv v_tv_t\inv \ldots v_{q_{n-2}}a_{n-1}v_{q_{n-1}}\inv v_tv_t\inv v_{q_{n-1}}a_nv_{q_n}\inv v_t\\
\hspace{5cm}
= z_1v_t\inv z_2v_t\inv \ldots z_{n-1}v_t\inv z_n \in L(\A_m(u)).
\end{array}
\eeq
Since $\A_m(u)$ is deterministic and involutive, we can successively omit factors of the form $xx\inv$ from any word in $L(\A_m(u))$. Since $v_{q_0} = 1$ and $q_n = t$, it follows from (\ref{cons2}) that $w = a_1\ldots a_n \in L(\A_m(u))$. Therefore $L(\A(u)) \subseteq L(\A_m(u))$ and so $L(\A(u)) = L(\A_m(u))$. 

It follows from (P3) and (P4) that $L(\A_{m+1}(u)) = L(\A_m(u))$. 
Thus, when building the sequence $(\A_n(u))_n$, we must necessarily reach some $k \geq 1$ such that $L(\A_{k+1}(u)) = L(\A_k(u))$, and this equality is decidable by Proposition \ref{rat}.
Now it suffices to show that $L(\A_{k+1}(u)) = L(\A_k(u))$ implies $\A_k(u) \cong \A(u)$.

Indeed, by (P2) we must have $L(\A_{k}(u)) = L(\A_{k+1}(u)) = L(\A_{k+2}(u)) = \ldots$, yielding $L(\A_{k}(u)) = L(\A(u))$ in view of (P3) and (P4). Since inverse automata are minimal, this implies $\A_k(u) \cong \A(u)$ and we are done.
\end{proof}

\section{Membership problem in finite-$\J$-above monoids}

Given a finitely generated monoid $M$ and a subset $L \subseteq M$, the {\it membership problem} (also known as generalized word problem) for $L$ in $M$ asks whether, given an element $g \in M$, one can decide whether $g \in L$. It is known that there exist finitely generated subgroups in the direct product of two free groups with unsolvable membership problem, due to Mihailova \cite{Mih}. Henceforth, we consider the membership problem in the setting of monoids having the finite-$\J$-above property. 
\subsection{Finite-$\J$-above monoids and factors}

Given a monoid $M$ and $u,v \in M$, we say that $u$ is a {\em factor} of $v$ if $v = xuy$ for some $x,y \in M$. Then $M$ is {\em finite-$\J$-above} if every element of $M$ has only finitely many factors. This is equivalent to say that every element of $M$ has only finitely many elements above it in the $\cal{J}$-order (we write $u \geq_{\cal{J}} v$ if $u$ is a factor of $v$). Note that the property of being finite-$\J$-above does not depend at all on any choice of presentation for $M$.

Note that free inverse monoids are always finite-$\J$-above: indeed, if $u$ is a factor of $v$ in $FIM(A)$, then the underlying graph of $\mt(u)$ embeds in the underlying graph of $\mt(v)$. Since both graphs are trees and $\mt(v)$ is finite, this leads us to finitely many choices for $u$ as an element of $FIM_A$.

Note also that direct products of finitely many finite-$\J$-above monoids are finite-$\J$-above. In particular, direct products of finitely many free inverse monoids are finite-$\J$-above.

A monoid $M$ is called {\it finitely recognizable} if singleton subsets of $M$ are recognizable. Equivalently, a monoid $M$ is finitely recognizable if finite subsets of $M$ are recognizable. See \cite{LV} for more details.

A monoid $M$ is called {\it residually finite} if for any distinct elements $s$ and $t$ in $M$ there is a congruence of finite index on $M$ such that $s$ and $t$ are not equivalent under this congruence. In other words, a monoid is residually finite if any pair of distinct elements can be distinguished in a finite quotient of $M$.

\begin{proposition}\label{ff}
	Let $M$ be a monoid and $u \in M$ have finitely many factors. Then the singleton $\{ u \}$ is recognizable. 
\end{proposition}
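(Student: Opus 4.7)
The plan is to invoke the syntactic-congruence criterion for recognizability recalled just above the statement: it suffices to show that the syntactic congruence $\sim_{\{u\}}$ on $M$ has finitely many classes. Write $F(u) = \{ v \in M \mid v \text{ is a factor of } u \}$, which is finite by hypothesis (and nonempty since $u = 1\cdot u \cdot 1$).

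First I would check that factors and non-factors of $u$ can never be syntactically equivalent. Indeed, if $a \in F(u)$ then there exist $p,q \in M$ with $paq = u$, so any $b$ with $a \sim_{\{u\}} b$ satisfies $pbq = u$ and is therefore itself in $F(u)$. Conversely, if $a \notin F(u)$ and $b \notin F(u)$, then for every $p,q \in M$ we have $paq \neq u$ and $pbq \neq u$, so the equivalence $paq = u \iff pbq = u$ holds vacuously and $a \sim_{\{u\}} b$. Thus the complement $M \setminus F(u)$ is contained in a single $\sim_{\{u\}}$-class (it is empty, or forms exactly one class).

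Next, the restriction of $\sim_{\{u\}}$ to $F(u)$ partitions a finite set, so it produces at most $|F(u)|$ classes. Combining the two observations, the total number of $\sim_{\{u\}}$-classes in $M$ is bounded by $|F(u)| + 1$, which is finite. By the syntactic-congruence characterization of recognizability (\cite[Theorem 2.3, p.247]{Sak}, quoted earlier), $\{u\}$ is therefore a recognizable subset of $M$.

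There is no real obstacle here: the only thing to be slightly careful about is the direction of the argument for non-factors (the vacuous case), and the observation that $a \sim_{\{u\}} b$ forces $b$ into $F(u)$ whenever $a \in F(u)$. Everything else is a counting argument on the finite set $F(u)$.
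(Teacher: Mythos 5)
Your argument is correct and follows essentially the same route as the paper's own short proof: both rest on the observation that any two non-factors of $u$ are syntactically equivalent (vacuously), so the syntactic congruence $\sim_{\{u\}}$ has at most $|F(u)|+1$ classes and recognizability follows from the finite-index characterization. Your extra remark that a factor cannot be equivalent to a non-factor is harmless but not needed for the counting.
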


\begin{proof}

	This was proved in \cite{LV}. We present a short proof here. Indeed, for any $v, w$ which are not factors of $u$ we have that $v$ and $w$ belong to the same syntactic congruence class with respect to $\{u \}$. Since $u$ has finitely many factors, this means that the syntactic monoid of $M$ with respect to $\{ u \}$ is finite, and so $\{ u \}$ is recognizable.
\end{proof}	

\begin{corollary}\label{fr}
	Let $M$ be a finite-$\J$-above monoid. Then $M$ is finitely recognizable.
\end{corollary}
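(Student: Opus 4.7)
The corollary follows almost immediately from Proposition \ref{ff} together with the definition of finite-$\J$-above. The plan is to unwind the two definitions and apply the previous proposition pointwise.

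First, by definition, a monoid $M$ is finite-$\J$-above precisely when every element $u \in M$ has only finitely many factors. So, fixing an arbitrary $u \in M$, the hypothesis of Proposition \ref{ff} is satisfied for $u$, and we may conclude that the singleton $\{u\}$ is a recognizable subset of $M$. Since $u$ was arbitrary, every singleton of $M$ is recognizable, which is exactly the definition of $M$ being finitely recognizable.

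If one wishes to invoke the equivalent formulation that all finite subsets are recognizable, it suffices to recall that recognizable subsets of any monoid are closed under finite unions: given finitely many recognizing homomorphisms $\theta_i : M \to K_i$ with $\{u_i\} = \theta_i^{-1}\theta_i(\{u_i\})$, the diagonal map $M \to \prod_i K_i$ recognizes $\{u_1, \ldots, u_n\}$, since the preimage of the (finite) image set is exactly $\{u_1, \ldots, u_n\}$. No step is a genuine obstacle here; the only substantive content has already been carried out in Proposition \ref{ff}, and the corollary is a direct translation of finite-$\J$-aboveness into the hypothesis of that proposition.
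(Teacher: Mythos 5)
Your proof is correct and matches the paper's (implicit) argument: the corollary is stated without proof precisely because it is the pointwise application of Proposition \ref{ff} to every element of a finite-$\J$-above monoid, using the paper's definition of finitely recognizable via singletons. Your additional remark on closure of recognizable sets under finite unions via the diagonal homomorphism is also correct, though not needed given the paper's definition.
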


\begin{proposition}\label{wp}
	Let $M$ be a finitely presented finite-$\J$-above monoid. Then $M$ has solvable word problem.
\end{proposition}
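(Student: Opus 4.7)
The plan is to use the classical Mal'cev--McKinsey strategy: finite presentation plus residual finiteness yields a solvable word problem via parallel enumeration. So the first step is to derive residual finiteness of $M$ from the finite-$\J$-above hypothesis. By Corollary \ref{fr}, $M$ is finitely recognizable, i.e.\ every singleton $\{u\}$ is recognizable. Given distinct $u, v \in M$, the syntactic congruence $\sim_{\{u\}}$ has finite index, and since $1 \cdot u \cdot 1 \in \{u\}$ while $1 \cdot v \cdot 1 \notin \{u\}$, we have $u \not\sim_{\{u\}} v$. Hence the projection to the (finite) syntactic monoid of $\{u\}$ separates $u$ and $v$, so $M$ is residually finite.

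Next, fix a finite presentation $M = \langle A \mid R\rangle$. Given $u, v \in A^{*}$, I would run two semi-decision procedures in parallel. The first enumerates, in a breadth-first manner, all pairs of words in $A^{*}$ obtained from $(u,v)$ by applying the defining relations in $R$ in either direction as one-step rewrites; this halts affirmatively if and only if $u$ and $v$ represent the same element of $M$. The second enumerates all finite monoids $K$ (say, up to isomorphism, by multiplication tables on $\{1,\dots,n\}$ for increasing $n$) together with all maps $\theta: A \to K$, checks whether the induced homomorphism $\theta: A^{*} \to K$ satisfies every relation in $R$ (this is decidable since $R$ is finite), and if so, checks whether $\theta(u) \neq \theta(v)$; this halts in the negative case whenever some such $\theta$ witnesses $u \neq v$ in $M$.

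If $u$ and $v$ represent the same element of $M$, the first procedure eventually halts by standard properties of finitely presented monoids. If they represent distinct elements, residual finiteness guarantees a finite quotient distinguishing them, and any such quotient eventually appears in the enumeration of the second procedure. Hence exactly one of the two procedures halts, and the word problem is solvable.

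The main subtlety is purely bookkeeping: ensuring that the enumeration in the second procedure really does capture every finite quotient of $M$, which is immediate because every homomorphism from $M$ to a finite monoid lifts to a homomorphism from $A^{*}$ that kills $R$, and conversely such a homomorphism descends to $M$. No effective control on the size of the separating quotient is needed, so there is no real obstacle beyond checking that both enumerations are effective; the deep content has already been extracted into Corollary \ref{fr}.
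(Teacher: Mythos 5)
Your proof is correct and follows essentially the same route as the paper: residual finiteness is extracted from the finiteness of the syntactic monoids of singletons (the content of Proposition \ref{ff} and Corollary \ref{fr}), and then the word problem is solved by the classical parallel-enumeration argument for finitely presented residually finite monoids. The only difference is that the paper simply cites \cite[Theorem 6.2]{LV} for this last step, whereas you spell out the standard Mal'cev--McKinsey argument explicitly, which is fine.
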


\begin{proof}

This was also proved in \cite{LV}. Indeed, by the proof of Proposition \ref{ff} $M$ is residually finite, and residually finite finitely presented monoids have solvable word problem, by \cite[Theorem 6.2]{LV}.

\end{proof}

\begin{proposition}\label{f}
	Let $M$ be a finitely presented finite-$\J$-above monoid. Then it is decidable whether one element of $M$ is a factor of another.
\end{proposition}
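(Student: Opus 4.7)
The plan is to combine the solvable word problem from Proposition~\ref{wp} with the recognizability of singletons from Corollary~\ref{fr} via a parallel semi-decision procedure. Fix a finite generating set $A$ with canonical projection $\alpha\colon A^{*}\to M$. Procedure (A) enumerates pairs $(p,q)\in A^{*}\times A^{*}$ and, using the solvable word problem, tests whether $\alpha(puq)=v$ in $M$; if some pair works, it halts with output \emph{yes}. Procedure (B) enumerates all surjective homomorphisms $\pi\colon M\to K$ onto finite monoids $K$, and for each such $\pi$ checks whether $\pi(u)$ is a factor of $\pi(v)$ in the finite monoid $K$; if some $\pi$ gives a negative answer, it halts with output \emph{no}. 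The two procedures are run in parallel.

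Soundness of (A) is immediate from the definition of a factor. For (B), if $\pi(u)$ fails to be a factor of $\pi(v)$ in $K$, then $\pi(xuy)=\pi(x)\pi(u)\pi(y)\neq\pi(v)$ for all $x,y\in M$, so $xuy\neq v$ in $M$. Enumeration in (B) is effective because $M$ is finitely presented: one enumerates finite monoids $K$ (by multiplication tables) and maps $A\to K$, retaining precisely those that satisfy the defining relations of $M$, which is a finite check in $K$; deciding factorness in $K$ itself is trivially finite.

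For termination, if $u$ is a factor of $v$ then (A) halts. If $u$ is not a factor of $v$, I invoke Corollary~\ref{fr}: the singleton $\{v\}$ is recognizable, so there exist a finite monoid $K$ and a surjective homomorphism $\pi\colon M\to K$ with $\pi^{-1}(\pi(v))=\{v\}$. In this specific quotient, $\pi(u)$ cannot be a factor of $\pi(v)$, for otherwise $\pi(v)=k_{1}\pi(u)k_{2}$ and any lifts $k_{i}=\pi(x_{i})$ yield $\pi(x_{1}ux_{2})=\pi(v)$, whence $x_{1}ux_{2}\in\pi^{-1}(\pi(v))=\{v\}$, so $x_{1}ux_{2}=v$ in $M$, contradicting the assumption. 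Procedure (B) will therefore eventually reach this $\pi$ and halt.

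The main conceptual step is the use of Corollary~\ref{fr} to produce a separating finite quotient certifying factor-nonmembership; once this is in place, the remaining ingredients (effective enumeration of finite quotients from a finite presentation, and decidability of factorness in a finite monoid) are routine, so I do not foresee any further obstacle.
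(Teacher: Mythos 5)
Your proposal is correct and follows essentially the same route as the paper: two parallel semi-decision procedures, the positive one via the solvable word problem and the negative one via enumeration of finite quotients, with Corollary~\ref{fr} supplying the separating finite quotient. The only cosmetic difference is that you work with an abstract recognizing morphism $\pi$ satisfying $\pi^{-1}(\pi(v))=\{v\}$, whereas the paper takes the syntactic monoid $M/\sim_u$ explicitly; these are equivalent by the preliminaries.
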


\begin{proof}
	Let  {\rm Mon}$\langle A \mid R\rangle$ be the finite presentation for $M$ and $\varphi: A^* \rightarrow M$ be the corresponding epimorphism. Let $u, v \in M$ be given by $w, z \in A^*: \varphi(w)=u, \varphi(z)=v$, and we need to decide whether $u$ is a factor of $v$. \par
	We run two algorithms simultaneously, the first one terminating if and only if $v \in \text{Fact} (u)$, and the second one if and only if $v \notin \text{Fact} (u)$. 
	The first algorithm enumerates all pairs $(x,y) \in A^* \times A^*$ and checks whether $\varphi(xwy) = \varphi(z)$. This can be done for each such pair in view of Proposition \ref{wp}. The algorithm terminates if and only if $v$ is a factor of $u$. 

	\par 
	The second algorithm recursively enumerates all finite monoids $F$ which are quotients of $M$ and for each such $F$ all homomorphisms $\psi$ from $M$ to $F$. This can be done for each $F$ by considering all (finitely many) maps from $A$ to $F$, extending them to $A^*$ and checking if all the relations $R$ belong to the kernel of this map. For each $\psi: M \rightarrow F$ the algorithm checks whether $\psi(v) \in \text{Fact}(\psi(u))$, which can be done in the finite monoid $F$, and terminates if this is not true. We claim that this algorithm terminates if and only $v \notin \text{Fact}(u)$. Indeed, if the algorithm terminates, then $\psi(v) \notin \text{Fact}(\psi(u))$ for some $\psi$ and $F$, and so $v \notin \text{Fact}(u)$. Suppose now that $v \notin \text{Fact}(u)$. By Corollary \ref{fr}, the singleton $\{u\}$ is recognizable, and so the syntactic congruence $\sim_u$ has finite index, i.e., $M / \sim_u$ is finite. Take $F=M / \sim_u$ and $\psi = \; \sim_u$. Then $\psi(v) \notin \text{Fact}( \psi(u))$. Indeed, otherwise we would have $u \sim_u xvy$ for some $x,y \in M$, so $xvy=u$, and $v \in \text{Fact}(u)$, which is not true. Thus we have constructed the desired algorithm.
	
\end{proof}

\begin{proposition}\label{factors}
	Let $M$ be a finitely presented finite-$\J$-above monoid, and $u \in M$. Then the set of factors of $u$ is computable. 
\end{proposition}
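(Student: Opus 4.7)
Since $\text{Fact}(u)$ is finite by the finite-$\J$-above hypothesis, my plan is to enumerate it by dovetailing a bottom-up search for factor representatives with a completeness certificate based on a finite quotient of $M$.

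First I would enumerate all triples $(x,w,y) \in (A^*)^3$ and, for each, apply Proposition~\ref{wp} to test whether $\varphi(xwy) = u$; each success exhibits $\varphi(w) \in \text{Fact}(u)$, and the word problem lets me deduplicate, maintaining a list $L$ of pairwise distinct factor representatives. Every factor of $u$ arises in some such decomposition, so $L$ will eventually contain a representative of every element of $\text{Fact}(u)$. In parallel I would enumerate pairs $(K,\theta)$ with $K$ a finite monoid and $\theta \colon M \to K$ a homomorphism (effective by assigning images on $A$ and checking the defining relations of $M$ in $K$), and for each compute the finite set $\text{Fact}(\theta(u)) \subseteq K$.

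The algebraic core of the argument is the following lemma: if a pair $(L,\theta)$ satisfies (i) $\theta$ is injective on $L \cup \{u\}$, (ii) $\theta(L) = \text{Fact}(\theta(u))$, and (iii) $\theta^{-1}(\theta(v)) = \{v\}$ for every $v \in L \cup \{u\}$, then $L = \text{Fact}(u)$. Indeed, any $v'' \in \text{Fact}(u)$ has $\theta(v'') \in \text{Fact}(\theta(u)) = \theta(L)$ by (ii), so $\theta(v'') = \theta(w)$ for some $w \in L$, and (iii) applied to $w$ forces $v'' = w \in L$. Once $L = \text{Fact}(u)$ has been reached, Corollary~\ref{fr} guarantees the existence of such a $\theta$: each singleton $\{v\}$ with $v \in L \cup \{u\}$ is recognizable, and a finite product of recognizing quotients yields a $\theta$ satisfying~(iii), with (i) and (ii) then automatic.

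The main obstacle I anticipate is that condition (iii) is only co-recursively enumerable: its failure is witnessed in finite time by finding some $v' \neq v$ in $M$ with $\theta(v') = \theta(v)$ (decidable by Proposition~\ref{wp}), but a direct positive certificate of (iii) is not immediately available. My plan to overcome this is to dovetail all three processes---triple enumeration, quotient enumeration, and, for every live pair $(L,\theta)$ passing (i)--(ii), a witness search for failure of (iii)---under a shared step budget, committing to output $L$ the first time a pair $(L,\theta)$ satisfies (i)--(ii) and has survived the witness search up to that stage. Correctness follows because a genuinely certifying $(L,\theta)$ (guaranteed by Corollary~\ref{fr} once $L = \text{Fact}(u)$) produces no witness, so the procedure commits to the correct $L$ in finite time, while any premature commitment would be forestalled by either a new factor appearing in the triple enumeration or a witness emerging in the separation search.
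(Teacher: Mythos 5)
Your core lemma is fine, and so is the first half of the machinery: enumerating triples $(x,w,y)$ with $\varphi(xwy)=u$ does produce, in the limit, representatives of all of $\text{Fact}(u)$, and if a pair $(L,\theta)$ genuinely satisfies your conditions (ii) and (iii) then indeed $L=\text{Fact}(u)$. The fatal problem is the commitment rule. As you yourself note, condition (iii) is only co-recursively enumerable: at no finite stage can the algorithm confirm it, only refute it. Your fix --- ``commit to $L$ the first time a pair passes (i)--(ii) and has survived the witness search up to that stage'' --- is not sound: a pair with $L\subsetneq\text{Fact}(u)$ can pass the decidable tests (i)--(ii) (a missing factor $v''$ with $\theta(v'')=\theta(w)$ for some $w\in L$ does not violate either of them), and nothing prevents the algorithm from committing before the separating witness for (iii) shows up. The sentence ``any premature commitment would be forestalled by \dots a witness emerging'' conflates ``a witness will eventually emerge'' with ``a witness emerges before commitment''; there is no stage bound tying the witness search to the commitment time, so the procedure as described can halt with a proper subset of $\text{Fact}(u)$. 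A secondary, repairable issue: your claim that (ii) is ``automatic'' for the product of recognizing quotients is only valid if $\text{Fact}(\theta(u))$ is computed inside the image $\theta(M)$ (equivalently, if you corestrict $\theta$ to a surjection); computed in all of $K=\prod K_v$, the factor set can strictly contain $\theta(L)$ even when $L=\text{Fact}(u)$, because the multipliers $p,q$ need not lie in $\theta(M)$.

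The paper avoids the co-r.e.\ trap by splitting the work differently. First it proves (Proposition~\ref{f}) that factorhood itself is decidable, by running two semi-decision procedures: the positive one is your triple enumeration, and the negative one enumerates finite quotients $\psi\colon M\to F$ and checks the \emph{positively verifiable} fact $\psi(v)\notin\text{Fact}(\psi(u))$, whose eventual success (when $v$ is not a factor) is guaranteed by the syntactic monoid of $\{u\}$ --- no global injectivity-type certificate about $\psi$ is ever needed. Then Proposition~\ref{factors} enumerates words by length, keeps those representing factors (decidable by Proposition~\ref{f}), and stops at the first length $n$ at which every factor-representing word of length $n$ is equivalent to a shorter one; completeness follows from the simple observation that a prefix of a word representing a factor of $u$ again represents a factor of $u$, so any longer minimal representative would contradict the saturation at length $n$. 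If you want to keep your certificate-style architecture you would have to replace (iii) by a positively checkable completeness criterion of this kind; as written, the proposal has a genuine gap.
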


\begin{proof}
	Let  {\rm Mon}$\langle A \mid R\rangle$ be the finite presentation for $M$ and $\varphi: A^* \rightarrow M$ be the corresponding epimorphism. Start computing all the words representing factors of $u$ in $A, A^2,A^3$ etc. Note that we can decide if a word represents a factor of $u$ by Proposition \ref{f}, and decide if two representatives correspond to the same element of $M$ by Proposition \ref{wp}. Since $u$ has finitely many factors, for some $n$ it will happen that all the words of length $n$ representing factors of $u$ have shorter representatives. We claim that we have now computed representatives of all factors of $u$. Indeed, suppose that $w  \in A^{n+k}$ represents a factor $z$ of $u$, for some $k>0$, and there are no shorter representatives for $z$ than $w$. Let $w=w_1w_2$, where $w_1$ has length $n$. Then $w_1$ represents a factor of $u$, and so it admits a shorter representative $w_1'$, therefore, $z$ admits a shorter representative than $w$, namely $w_1'w_2$, a contradiction. Thus all minimal representatives of factors of $u$ have length less than $n$, and are computable.
\end{proof}

\begin{theorem}\label{thm1}
	Let $M$ be a finitely presented finite-$\J$-above monoid.  Let also $\varphi: A^* \rightarrow M$ be a homomorphism, and $L\subseteq A^*$ be a language such that for any rational language $R$ in $A^*$ (given by a finite automaton) it is decidable whether $L \cap R$ is empty or not. Then the membership problem for $\varphi(L)$ is decidable in $M$.

\end{theorem}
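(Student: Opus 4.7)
The plan is to reduce the membership question to the hypothesis on $L$. Given $g \in M$ (presented by some word $w \in A^*$ with $\varphi(w) = g$), one has $g \in \varphi(L)$ if and only if $L \cap \varphi^{-1}(g) \neq \emptyset$. Hence it suffices to produce, effectively from $g$, a finite deterministic automaton recognising $\varphi^{-1}(g) \subseteq A^*$; the hypothesis then decides the remaining emptiness question.

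To build this automaton I will compute the syntactic monoid of $\{g\}$ in $M$. By Corollary \ref{fr}, this monoid is finite, so $\varphi^{-1}(g)$ is automatically recognisable, hence rational. For effectivity I first apply Proposition \ref{factors} to obtain $A^*$-representatives for all elements of $\mbox{Fact}(g)$. As in the proof of Proposition \ref{ff}, every non-factor of $g$ lies in a single ``trash'' $\sim_g$-class, so the syntactic classes are in bijection with the equivalence classes of factors of $g$ together with this one extra class. For factors $u, v \in \mbox{Fact}(g)$ one has $u \sim_g v$ iff $F_u = F_v$, where
\[
F_u \;=\; \{(p,q) \in \mbox{Fact}(g)^2 \mid puq = g\};
\]
each finite set $F_u$ is computable using the solvable word problem (Proposition \ref{wp}). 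Multiplication on classes is obtained by taking representative products and locating their classes again via $F$, with $F_{uv} = \emptyset$ signalling the trash class (also detectable directly via Proposition \ref{f}). In the same way, I compute $\theta(\varphi(a))$ for each $a \in A$ and the target class $\theta(g)$, where $\theta: M \to M/{\sim_g}$ denotes the syntactic projection.

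With these data I assemble the finite deterministic automaton over $A$ whose state set is $M/{\sim_g}$, with initial state $\theta(1)$, unique final state $\theta(g)$, and transitions $s \mapright{a} s \cdot \theta(\varphi(a))$; by construction it recognises $\varphi^{-1}(g)$. Feeding this automaton together with $L$ into the decision procedure supplied by the hypothesis then settles whether $L \cap \varphi^{-1}(g) = \emptyset$, and hence whether $g \in \varphi(L)$. The only real obstacle is the effective construction of the syntactic monoid of $\{g\}$, but this is delivered by the preceding results of this section, namely the solvable word problem (Proposition \ref{wp}) and the computability of $\mbox{Fact}(g)$ (Proposition \ref{factors}); nothing deeper is required.
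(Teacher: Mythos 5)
Your proof is correct and follows essentially the same route as the paper: reduce membership of $g$ in $\varphi(L)$ to emptiness of $L \cap \varphi^{-1}(g)$, and effectively build a finite automaton for $\varphi^{-1}(g)$ from the computable set of factors (Proposition \ref{factors}) and the solvable word problem (Proposition \ref{wp}). The paper's automaton simply takes the factors of $g$ themselves as states, with a partial transition $v \mapright{a} v\varphi(a)$ exactly when that product is again a factor, so your detour through the syntactic congruence and the sets $F_u$ is correct but not needed.
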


\begin{proof}
	We have $u \in \varphi (L)$ if and only if $\varphi^{-1}(u) \cap L$ is non-empty, and $\varphi^{-1}(u)$ is rational, since $\{ u \}$ is recognizable by Corollary \ref{fr}. Thus, it suffices to show that a finite automaton for $\varphi^{-1}(u)$ is effectively constructible.
	\par 
	
	 Indeed, consider the following deterministic finite automaton $C(u)$ over $A$: its vertex set is the set of all factors of $u$ (which is finite since $M$ is finite-$\J$-above), the only initial state is $1$, the only terminal state is $u$, and for every vertex $v$ of $C(u)$ and $a \in A$ there is an $a$-labelled edge starting from $v$ if and only if the element $v \varphi(a)$ is a factor of $u$, and in this case this edge ends in $v \varphi(a)$. 
Then $C(u)$ recognizes $\varphi^{-1}(u)$. Indeed, if $p$ is a successful path in $C(u)$, with label $a_{i_1}...a_{i_k}$, then by definition we have $\varphi(a_{i_1}) \ldots \varphi(a_{i_k})=u$, so $a_{i_1}...a_{i_k} \in \varphi^{-1}(u)$. And if $x=a_{j_1}...a_{j_l} \in \varphi^{-1}(u)$, then $\varphi(a_{j_1})\varphi(a_{j_2})...\varphi(a_{j_l})=u$, so    $\varphi(a_{j_1}), \varphi(a_{j_1})\varphi(a_{j_2}), \ldots, \varphi(a_{j_1})\varphi(a_{j_2})\ldots\varphi(a_{j_{l-1}})$ are all factors of $u$, and we have a successful path with label $x$ by definition.

\par 
Note that the automaton $C(u)$ is algorithmically constructible, since all the factors of $u$ can be computed by Proposition \ref{factors}, and (in the above notation) one can decide whether $v \varphi(a)$ is a factor of $u$, and if yes, to which factor it is equal exactly, since $M$ has solvable word problem by Proposition \ref{wp}. This proves the theorem.
\end{proof}

The next corollary features a class of languages  known as
context-free languages. They are usually defined through structures called context-free grammars or pushdown
automata. For the basic theory, the reader is referred to \cite{Ber}, \cite{HU}.

\begin{corollary}
 In the notations of Theorem \ref{thm1}, membership problem is solvable for $\varphi(L)$ in $M$ for any context-free language $L$.	
 In particular, rational subsets and so finitely generated submonoids have solvable membership problem in $M$.
\end{corollary}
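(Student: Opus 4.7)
The plan is to invoke Theorem~\ref{thm1}, whose hypothesis reduces the task to deciding, for any rational language $R$ over $A$ given by a finite automaton, whether $L \cap R = \emptyset$. I would therefore recall two classical facts from formal language theory (both available in \cite{Ber,HU}): first, the intersection of a context-free language with a rational language is again context-free, and a pushdown automaton for $L \cap R$ is effectively constructible from a pushdown automaton for $L$ and a finite automaton for $R$ via the standard product construction; second, emptiness is decidable for context-free languages, for instance by computing the productive nonterminals of a reduced context-free grammar for $L \cap R$ and checking whether the start symbol is productive. Combining these two facts gives a decision procedure for $L \cap R = \emptyset$, and Theorem~\ref{thm1} then yields decidability of membership in $\varphi(L)$.

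For the ``in particular'' clause I would first observe that every rational language is context-free, so the statement for rational subsets of $M$ (that is, images $\varphi(L)$ with $L$ rational) is an immediate specialization of the first assertion; alternatively, one may appeal directly to closure of rational languages under intersection together with decidability of emptiness for rational languages, avoiding the context-free machinery altogether. For finitely generated submonoids, if $N$ is generated by $m_1,\ldots,m_k \in M$ and $w_i \in A^*$ is chosen with $\varphi(w_i) = m_i$, then $N = \varphi(\{w_1,\ldots,w_k\}^*)$ and $\{w_1,\ldots,w_k\}^*$ is rational in $A^*$, so this case reduces to the rational case already handled.

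The only conceivable subtlety is making sure that the effective constructions underlying (i) and (ii) remain effective when composed in our setting, but both are uniform in their inputs, so no genuine obstacle appears. Overall the proof is short, essentially consisting of plugging standard context-free language technology into the decision scheme already provided by Theorem~\ref{thm1}.
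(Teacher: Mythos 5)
Your proposal is correct and follows essentially the same route as the paper's proof: reduce via Theorem \ref{thm1} to deciding emptiness of $L \cap R$, use that context-free languages are effectively closed under intersection with rational languages and have decidable emptiness, and handle rational subsets and finitely generated submonoids as special cases (rational languages being context-free, and finitely generated submonoids being images of rational languages of the form $\{w_1,\ldots,w_k\}^*$). The only difference is that you spell out the standard constructions in slightly more detail than the paper does.
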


\begin{proof}
	If $L$ is context-free and $R$ is rational, then $L \cap R$ is context-free and effectively constructible from $L$ and $R$, and it is decidable whether a context-free language is empty or not. Rational subsets are images of rational languages, which are context-free, and finitely generated submonoids are particular cases of rational subsets.
\end{proof}

\subsection{Inverse monoids with the finite-$\J$-above condition}

In the following results we consider finitely presented inverse monoids (i.e., inverse monoids which have a finite inverse monoid presentation, not a finite monoid presentation).

\begin{proposition}\label{prop3}
	Let $M$ be a finite-$\J$-above finitely presented inverse monoid. Then $M$ has solvable word problem.
\end{proposition}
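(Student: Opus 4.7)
The plan is to parallel the proof of Proposition~\ref{wp}, keeping in mind that a finite inverse-monoid presentation ${\rm Inv}\langle A \mid R\rangle$ is \emph{not} a finite monoid presentation, since the Vagner congruence $\rho_A$ is infinite; hence Proposition~\ref{wp} cannot be invoked directly. Write $\tau = (\rho_A \cup R)^{\sharp}$ and let $\pi : \wt{A}^* \to M$ be the canonical projection, so $M = \wt{A}^*/\tau$; let $u,v \in \wt{A}^*$ be the input words.

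First I would verify that $M$ is residually finite in the inverse-monoid sense, i.e.\ separated by its finite inverse-monoid quotients. Corollary~\ref{fr} already gives residual finiteness in the monoid sense: for every $s \in M$ the singleton $\{s\}$ is recognisable, so the syntactic quotient $M/\!\sim_s$ is a finite monoid whose $\sim_s$-class of $s$ is the singleton $\{s\}$, separating $s$ from any other element. Using the standard semigroup-theoretic fact that monoid-homomorphic images of an inverse monoid are themselves inverse submonoids of the codomain (regularity passes to the image and the idempotents of the image still commute), we may take this finite quotient to be a finite inverse monoid.

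For the algorithm itself I would dovetail two semi-decisions. The first enumerates the congruence $\tau$: since $R$ is finite and the Vagner relations generating $\rho_A$ form a recursive set, $\tau$ is recursively enumerable, and this procedure halts iff $u\,\tau\,v$. The second enumerates pairs $(F,\theta)$ where $F$ is a finite inverse monoid and $\theta:A \to F$ an arbitrary map; it extends $\theta$ to $\wt{A}^*$ by setting $\theta(a^{-1}) := \theta(a)^{-1}$ (the inverse computed inside $F$), checks that every relation of $R$ is satisfied in $F$ (so that $\theta$ factors through a well-defined homomorphism $\bar{\theta}:M \to F$), and halts as soon as some such pair yields $\bar{\theta}(\pi(u)) \neq \bar{\theta}(\pi(v))$. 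By the residual finiteness just established, this second procedure halts iff $u\,\not\tau\,v$, so running them in parallel decides whether $\pi(u)=\pi(v)$.

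The main delicate point is the upgrade from monoid-style residual finiteness (delivered directly by Corollary~\ref{fr}) to residual finiteness through finite \emph{inverse} monoid quotients, which is what makes enumeration in the second semi-algorithm tractable; this rests on the standard fact about homomorphic images of inverse monoids. A minor subsidiary point is noting that $\rho_A$, though infinite, is generated by a recursive family of relations so that $\tau$ is recursively enumerable; once both of these are in hand, the rest of the proof is the routine dovetailing argument.
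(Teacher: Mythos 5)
Your proof is correct, but it follows a genuinely different route from the paper. The paper's proof is geometric: finite-$\J$-above forces all $\J$-classes, hence all $\R$-classes, to be finite, so every Sch\"utzenberger automaton $\A(u)$ is finite and, by Proposition \ref{cons}, effectively constructible via Stephen's procedure; the word problem then falls out of Proposition \ref{schutz} (check $u \in L(\A(v))$ and $v \in L(\A(u))$). You instead adapt the residual-finiteness argument of Proposition \ref{wp}, and you correctly identify and handle the obstruction that blocks a direct citation: an inverse presentation is not a finite monoid presentation because $\rho_A$ is infinite. Your two key moves both check out: (i) Corollary \ref{fr} gives separation by finite syntactic quotients, and since a monoid-homomorphic image of an inverse monoid is again inverse (this does implicitly use Lallement's lemma to know idempotents of the image lift, and one also uses that a monoid morphism between inverse monoids preserves inverses, so your prescribed extension $\theta(a^{-1}) = \theta(a)^{-1}$ is the one realized by the separating quotient), the separating quotients may be taken to be finite \emph{inverse} monoids; (ii) restricting the enumeration to finite inverse monoids with inverse-preserving assignments makes all Vagner relations hold automatically, so well-definedness of $\bar\theta$ reduces to checking the finitely many relations in $R$ — the same verification the paper performs when defining $\epsilon_i^{\P}$. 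Combined with the observation that $\tau$ is recursively enumerable, the dovetailing is sound. What each approach buys: yours is more self-contained and conceptually parallel to the general monoid case, needing none of Stephen's theory beyond the definitions; the paper's yields a direct construction (build $\A(u)$ and $\A(v)$ and compare), which is of lower complexity than blind enumeration and, importantly, produces the finite Sch\"utzenberger automata that are reused in Proposition \ref{prop4} and throughout Section 4.
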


\begin{proof}
	Since $M$ is finite-$\J$-above, the $\J$-classes of $M$ are all finite. This implies that the $\cal{R}$-classes of $M$ are also finite, and so the Sch\"utzenberger automaton $\A(u)$ is finite for every $u$ in $M$. By Proposition \ref{cons}, this means that $\A(u)$ is constructible for every $u$, and then Proposition \ref{schutz} (namely, the equivalence of the first and the last conditions) implies that the word problem is solvable in $M$.
\end{proof}

\begin{proposition}\label{prop4}
	Let $M$ be a finite-$\J$-above finitely presented inverse monoid. Then for every element $w$ of $M$ the set of factors of $w$ is computable. 
\end{proposition}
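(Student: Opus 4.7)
The plan is to mirror the strategy of Propositions~\ref{f} and \ref{factors}, substituting the inverse-monoid word problem from Proposition~\ref{prop3} in place of Proposition~\ref{wp}. The only genuinely new ingredient is the inverse-monoid analogue of Proposition~\ref{f}: decidability of the factor relation in~$M$.

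To decide whether a given $u \in M$ is a factor of a given $v \in M$, I would run two semi-algorithms in parallel. The first enumerates pairs $(x,y) \in \wt{A}^* \times \wt{A}^*$ and checks whether $xuy = v$ in~$M$; this is decidable by Proposition~\ref{prop3}, and halts precisely when $u \in \text{Fact}(v)$. The second enumerates all finite inverse monoid quotients $\psi : M \to F$ by listing finite inverse monoids~$F$ and maps $\psi_0 : A \to F$, extending $\psi_0$ to $\wt{A}^*$ via $a^{-1} \mapsto \psi_0(a)^{-1}$, and verifying that every relation of the given finite inverse presentation holds in~$F$; for each such~$\psi$ it tests whether $\psi(u) \in \text{Fact}(\psi(v))$ by exhaustive search over $F \times F$, and halts as soon as this fails.

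Termination of the second semi-algorithm when $u \notin \text{Fact}(v)$ uses Corollary~\ref{fr}: the singleton $\{v\}$ is recognizable in~$M$, so the syntactic quotient $\theta : M \to M/{\sim_v}$ is a finite monoid quotient. Because a monoid quotient of an inverse monoid is itself an inverse monoid (a standard fact from inverse semigroup theory), $M/{\sim_v}$ appears in the enumeration. If $\theta(v) = \theta(x)\theta(u)\theta(y)$ held for some $x, y \in M$, then $\theta(xuy) = \theta(v)$, and recognizability of $\{v\}$ would force $xuy = v$; contrapositively, $u \notin \text{Fact}(v)$ forces $\theta(u) \notin \text{Fact}(\theta(v))$, which the procedure detects.

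With factor-decidability in hand, the conclusion follows by the argument of Proposition~\ref{factors} applied over the alphabet~$\wt{A}$: enumerate words of increasing length, retain those representing factors of~$w$, group them by equality in~$M$ using Proposition~\ref{prop3}, and halt at the first length~$n$ at which every length-$n$ word representing a factor admits a strictly shorter representative. Finiteness of $\text{Fact}(w)$ guarantees termination, and correctness uses the observation that any prefix of a factor-representing word also represents a factor. The main subtlety I anticipate is confirming that the syntactic quotient invoked in the termination argument is reached by the enumeration of finite inverse monoid quotients; once that classical fact about quotients of inverse monoids is in hand, the remainder is a direct transplant of the proofs of Propositions~\ref{f} and \ref{factors}.
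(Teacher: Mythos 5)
Your argument is correct, but it follows a different route from the one the paper actually takes. The paper explicitly remarks that one ``could adapt the proof of Propositions~\ref{f} and~\ref{factors}'' --- which is precisely what you do --- but then opts instead for a lower-complexity algorithm based on Sch\"utzenberger automata: since $M$ is finite-$\J$-above its $\R$-classes are finite, so $\A(w)$ is finite and constructible by Proposition~\ref{cons}; making every state of $\A(w)$ both initial and terminal yields an automaton whose language is exactly the set of words representing factors of $w$, and a stabilization argument on the sets of words of bounded length read into each state then extracts a finite, computable set of representatives. Your adaptation of the quotient-enumeration argument is sound: the two points you flag as needing care do hold --- a monoid-homomorphic image of an inverse monoid is inverse (by Lallement's lemma the idempotents of the image lift to idempotents, hence commute), so the finite syntactic quotient $M/{\sim_v}$ is an inverse monoid and is reached by your enumeration; and since the Vagner relations hold automatically in any inverse monoid, only the finitely many relations of the given inverse presentation need to be verified when testing a candidate map $A \to F$. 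The final step transplanting Proposition~\ref{factors} to the alphabet $\wt{A}$ is also fine. The trade-off is that your proof is more elementary (no Sch\"utzenberger machinery beyond the word problem of Proposition~\ref{prop3}) but relies on an unbounded search through finite inverse monoids, whereas the paper's proof yields a concrete algorithm whose running time is controlled by the size of the finite automaton $\A(w)$.
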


\begin{proof}

 We could adapt the proof of Propositions \ref{f} and \ref{factors}, but the proof below gives an algorithm of lower complexity.
 
 	We use the notation of Subsection \ref{im}. Denote also by $Fact(w)$ the set of all factors of $w \in M$.
	Let $u \in \wt{A}^*$. As we saw in the proof of Proposition \ref{prop3}, the Sch\"utzenberger automaton $\A(u)$ is finite, since $M$ is finite-$\J$-above. Let $q_0=(uu^{-1})\tau$ and $t=u\tau$ denote the initial and terminal vertices of $\A(u)$ respectively. Let also $\A'(u)$ be a finite automaton obtained from the Sch\"utzenberger automaton $\A(u)$ by making all the vertices initial and all the vertices terminal. We first claim that 
\begin{equation}\label{l}	
	L(\A'(u))= \{ v \in \wt{A}^*: \: v\tau \in \text{Fact} (u\tau) \}.
\end{equation}

 Suppose first that $v \in L(\A'(u))$, so $v$ is the label of a path in $\A(u)$ starting in some vertex $p$ and ending in $q$. Since $\A(u)$ is trim, there is a path from $q_0$ to $p$ in $\A(u)$, labelled by some $x$, and a path from $q$ to $t$ in $A(u)$, labelled by some $y$. We have $xvy \in L(\A(u))$, so $uu^{-1}xvy \tau= u\tau$, and thus $v\tau \in \text{Fact}(u \tau)$.
 \par 
On the other hand, if $v \tau \in \text{Fact} (u\tau)$, then there exist $x,y$ such that $xvy \tau = u \tau$, and so $uu^{-1}xvy \tau=uu^{-1}u\tau =u\tau$, and therefore there is a path from $q_0$ to $t$ in $\A(u)$ with label $xvy$. This means that $v$ is a label of a subpath of this path, which is a successful path in $\A'(u)$, so $v \in L(\A'(u))$. This shows that (\ref{l}) holds.

Let $\wt{A}^n$ denote the set of all words in $\wt{A}^*$ of length $n$. For a vertex $q$ in $\A'(u)$ let $B_{n,q} \subseteq L(\A'(u)) \cap \wt{A}^n$ consist of all words $v$ of length $n$ in $L(\A'(u))$ such that there is a path in $\A'(u)$ with label $v$ ending in $q$. Then we have, for every $q$ and every $n>0$, using (\ref{l}):
$$\bigcup_{i=0}^n B_{i,q} \tau \subseteq \bigcup_{i=0}^{n+1} B_{i,q} \tau \subseteq \text{Fact}(u \tau).$$
Since $\text{Fact}(u \tau)$ is finite, the sequence $\bigcup_{i=0}^n B_{i,q}$ stabilizes for each $q$, 
so there exists a minimal $m>0$ such that for every $q$ we have
$$\bigcup_{i=0}^m B_{i,q} \tau = \bigcup_{i=0}^{m+1} B_{i,q} \tau.$$
Note that each $B_{n,q}$ is finite and computable, and since the word problem is solvable in $M$ by Proposition \ref{prop3}, we can compute the minimal $m$ as above.
\par 
We now show that for every $q$ and $n$
\begin{equation}\label{bnq}
	B_{n,q}\tau \subseteq \bigcup_{i=0}^m B_{i,q} \tau.
\end{equation}
Indeed, proceed by induction on $n$, suppose that for $n \leq n_0$ the claim is proved for every $q$ and prove it for $n=n_0+1$. Let $w \in B_{n_0+1,q}$, so $w$ is a label of some path in $\A'(u)$ ending in $q$. Write $w=za$, where $a \in \wt{A}$. Since the automaton $\A'(u)$ has all the vertices as initial and terminal, we have $z \in L(\A'(u))$, and so $z \in B_{n_0,r}$, where $r$ is the corresponding vertex of $\A(u)$ on the above path, connected to $q$ with an edge labelled by $a$. By the induction hypothesis, we have $B_{n_0,r}\tau \subseteq \bigcup_{i=0}^m B_{i,r} \tau$, so there exists $x \in  \bigcup_{i=0}^m B_{i,r}$ such that $z \tau x$. Since $x$ is the label of a path in $\A'(u)$ ending in $r$, $xa$ is the label of a path in $\A'(u)$ ending in $q$, so $xa \in \bigcup_{i=0}^{m+1} B_{i,q}$, so 
$$w \tau = za \tau =xa \tau \in  \bigcup_{i=0}^{m+1} B_{i,q} \tau = \bigcup_{i=0}^{m} B_{i,q} \tau,$$
as desired, and so (\ref{bnq}) holds.
\par
Thus the sequences $\bigcup_{i=0}^n B_{i,q} \tau$ stabilize from $n=m$ for every $q$. 
Let $S=\bigcup_q \bigcup_{i=0}^m B_{i,q}$, then $S$ is finite and computable.
Note that every element of $L(\A'(u))$ belongs to $B_{n,q}$ for some $n$ and $q$. Thus
$$ \text{Fact}(u \tau)=L(\A'(u))\tau= \bigcup_{q} \bigcup_{i=0}^{\infty} B_{i,q}\tau=\bigcup_{q} \bigcup_{i=0}^{m} B_{i,q}\tau=S\tau,$$ hence $\text{Fact}(u\tau)$ is computable.

\end{proof}

\begin{theorem}\label{thm2}
	Let $M$ be a finite-$\J$-above finitely presented inverse monoid, $\varphi: A^* \rightarrow M$ be a homomorphism, and $L\subseteq A^*$ be a language such that for any rational language $R$ in $A^*$ it is decidable whether $L \cap R$ is empty or not. Then the membership problem for $\varphi(L)$ is decidable in $M$.
\end{theorem}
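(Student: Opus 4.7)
The plan is to mimic the proof of Theorem \ref{thm1} almost verbatim, with the inverse-monoid Propositions \ref{prop3} and \ref{prop4} playing the roles of Propositions \ref{wp} and \ref{factors}. The crucial observation is that the finite-$\J$-above property is a property of $M$ as a monoid, so Proposition \ref{ff} and Corollary \ref{fr} continue to apply; in particular $\{u\}$ is recognizable in $M$ for every $u \in M$, and hence $\varphi^{-1}(u)$ is a rational language in $A^*$.

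First, I would reduce the problem: given $u \in M$ represented by some word in $A^*$, we have $u \in \varphi(L)$ iff $\varphi^{-1}(u) \cap L \neq \emptyset$. By hypothesis on $L$, it suffices to show that a finite automaton recognizing $\varphi^{-1}(u)$ can be effectively constructed from $u$. Next, I would build the deterministic automaton $C(u)$ over $A$ whose state set is the (finite) set $\mathrm{Fact}(u)$ of factors of $u$, with initial state $1$, terminal state $u$, and an $a$-labelled edge from a state $v$ to $v\varphi(a)$ whenever the latter is itself a factor of $u$. The proof that $L(C(u)) = \varphi^{-1}(u)$ is identical to the one in Theorem \ref{thm1}: a successful path's label multiplies to $u$, and conversely every word mapping to $u$ produces a sequence of prefixes each of which, being a left factor of $u$, lies in $\mathrm{Fact}(u)$ and thus traces a successful path.

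The only genuine verification is effectiveness of the construction. Here Proposition \ref{prop4} provides representatives in $\wt A^*$ for all elements of $\mathrm{Fact}(u)$, and Proposition \ref{prop3} lets us decide, for each computed factor $v$ and each $a \in A$, whether $v\varphi(a) \in \mathrm{Fact}(u)$ and if so identify which previously listed factor it equals. Assembling these decisions yields $C(u)$ effectively.

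I do not expect a serious obstacle; the conceptually new ingredient (Proposition \ref{prop4}) has already been established using Sch\"utzenberger automata, and everything else transfers directly from the monoid setting of Theorem \ref{thm1}. The only mild care needed is the bookkeeping between words in $A^*$ (domain of $\varphi$) and words in $\wt B^*$ for whatever alphabet $B$ the inverse monoid presentation of $M$ uses, but since $\varphi(a)$ is given for each $a \in A$ as an element of $M$ (equivalently, up to $\tau$, as a word in $\wt B^*$), both the word problem and the factor test of Propositions \ref{prop3} and \ref{prop4} are directly applicable to the elements of $M$ that arise.
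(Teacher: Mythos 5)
Your proposal is correct and follows exactly the paper's route: the paper's proof of Theorem \ref{thm2} simply invokes Propositions \ref{prop3} and \ref{prop4} and repeats the construction of the factor automaton $C(u)$ from the proof of Theorem \ref{thm1}, which is precisely what you spell out. The points you flag (recognizability of singletons being presentation-independent, and the bookkeeping between the alphabet $A$ of $L$ and the presentation alphabet of $M$) are handled the same way, so no further verification is needed.
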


\begin{proof}
	It follows from Propositions \ref{prop3} and \ref{prop4} as in the proof of Theorem \ref{thm1}.
\end{proof}

\begin{corollary}\label{cor2}
 In the notations of Theorem \ref{thm2}, membership problem is solvable for $\varphi(L)$ in $M$ for any context-free language $L$.	
 In particular, rational subsets and so finitely generated submonoids have solvable membership problem in finite-$\J$-above finitely presented inverse monoids.
\end{corollary}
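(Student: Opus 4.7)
The plan is a direct application of Theorem \ref{thm2}: it suffices to verify that if $L \subseteq A^*$ is context-free, then for every rational language $R \subseteq A^*$ it is decidable whether $L \cap R = \emptyset$. Both ingredients are classical results from the theory of context-free languages (see \cite{Ber}, \cite{HU}): the intersection of a context-free language with a rational language is again context-free and effectively constructible (by a standard product construction on a pushdown automaton for $L$ and a finite automaton for $R$), and emptiness of a context-free language is decidable (one can test whether the start symbol of a context-free grammar generates any terminal word via a fixed-point algorithm on the productive nonterminals). Chaining these two effective procedures yields an algorithm deciding whether $L \cap R = \emptyset$, so the hypothesis of Theorem \ref{thm2} is satisfied and the membership problem for $\varphi(L)$ is decidable in $M$.

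For the second assertion, the plan is to realize both classes as images under $\varphi$ of context-free languages. A rational subset of $M$, by definition (and as recalled in Section 2.2 via \cite[Proposition 1.7, p.223]{Sak}), is of the form $\varphi(R)$ for some rational language $R \subseteq A^*$; since rational languages are in particular context-free, the first part applies. A finitely generated submonoid of $M$ generated by $\varphi(w_1), \ldots, \varphi(w_k)$ is exactly $\varphi(\{w_1, \ldots, w_k\}^*)$, and $\{w_1, \ldots, w_k\}^*$ is a rational (hence context-free) language in $A^*$, so this is a special case of the rational subsets.

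The only step that requires any care is the effectivity of the intersection and emptiness test for context-free languages, but both are standard textbook constructions and no new work is needed here. Thus the corollary follows immediately from Theorem \ref{thm2} combined with these two classical facts about context-free languages.
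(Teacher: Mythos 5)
Your proposal is correct and follows essentially the same route as the paper: it verifies the hypothesis of Theorem \ref{thm2} via the classical facts that the intersection of a context-free language with a rational one is effectively context-free and that emptiness of context-free languages is decidable, then handles the second assertion by writing rational subsets as images of rational (hence context-free) languages and finitely generated submonoids as rational subsets. Nothing is missing; the extra detail you give on the product construction and the emptiness test only makes explicit what the paper leaves as standard.
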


Note that free inverse monoids are not finitely presented as monoids, so the above results do not follow directly from Theorem \ref{thm1}.

\section{Fragile words and the eraser morphism}

\subsection{Fragile words: definition and motivation}
Let $n \geq 2$ and let $FG_n=FG_A=FG(a_1,a_2,\ldots,a_n)$ denote the free group of rank $n$, with a free basis $A=\{ a_1, \ldots, a_n \}$. For an element $y$ of $FG(a_1, a_2, \ldots, a_n)$ and $1 \leq i \leq n$ denote by $y_{a_i}$ the element obtained from $y$ by deleting all occurrences of $a_i$ and its inverse. Note that $y_{a_i}$ can be considered as an element of $FG(a_1, a_2, \ldots, a_n)$ or as an element of $FG(a_1, \ldots, a_{i-1}, a_{i+1}, \ldots, a_n)$. 
Note that $(y_{a_i})_{a_j}=(y_{a_j})_{a_i}$, 
$(y_{a_i})_{a_i}=y_{a_i}$, $(y^{-1})_{a_i}=(y_{a_i})^{-1}$ and  $(uv)_{a_i}=u_{a_i}v_{a_i}$, for all $i,j=1,\ldots,n$.
\begin{definition}[Fragile words]
Following \cite{fragile}, we call an element $1\neq w \in FG(a_1,\ldots,a_n)$ {\it fragile} if $w_{a_i}=1$ for all $i=1,\ldots,n$. Note that in \cite{fragile} such elements were called strongly fragile. Denote the set of all fragile words in $FG_n$ by $Fr_n$.
\end{definition}

One of the main motivations to study fragile words comes from the connection with ``picture-hanging puzzles" and Brunnian links, see the introduction to our paper and \cite{puzzles}, \cite{GG} for more details. 

There is another motivation to study fragile words from \cite{fragile} that comes from the theory of automaton groups, i.e., groups defined by actions of certain alphabetical transducers on regular rooted trees. Let us start by recalling the notion of transducer (see, for instance, \cite{Ber}). 
 A (finite) \emph{transducer} is a quadruple $\mathcal{A} = (Q, A, \mu, \lambda)$, where:
\begin{itemize}
\item $Q$ is a finite set, called the set of \emph{states};
\item $A$ is a finite set, called the \emph{alphabet};
\item $\mu: Q \times A \to Q$ is the transition map or \emph{restriction};
\item $\lambda: Q\times A \to A$ is the output map or \emph{action}.
\end{itemize}


A very convenient way to represent a transducer is through its Moore diagram: this is a directed labelled graph whose vertices are
identified with the states of  $\mathcal{A}$ 
For every state $q \in Q$ and every letter $a \in A$, the diagram
has an arrow from $q$ to $\mu(q,a)$ labelled by $a|\lambda(q, a)$. The transducer
$\mathcal{A}$ contains a \emph{sink} $e\in Q$, if one has $\lambda(e, a)=a$ and $\mu(e,a)=e$ for any $a\in A$. The transducer  $\mathcal{A}$  is said to be {\it invertible} if, for all $q \in Q$, the transformation $\lambda(q, \cdot) : A \to A$ is a permutation of $A$.  If $\mathcal{A}$ is invertible, one can define the maps $\lambda$ and $\mu$ on $Q^{-1}$, the set of formal inverses of $Q$, by exchanging the input and the output in the automaton, i.e., for every state $q^{-1} \in Q^{-1}$ and every letter $a \in A$, the diagram has an arrow from $q^{-1}$ to $(\mu(q,a))^{-1}$ labelled by $\lambda(q, a)|a$. Finally, the maps $\lambda$ and $\mu$ can be naturally extended to $Q^*\times A^*$ by using the following recursive rules:
\begin{eqnarray}\label{eq: action}
\lambda(q, aw) = \lambda(q,a)\lambda(\mu(q, a), w), \ \ \lambda(qq', a) = \lambda(q,\lambda(q', a))
\end{eqnarray}
\begin{eqnarray}
\mu(q, aw) = \mu(\mu(q, a), w), \ \ \mu(qq', a) = \mu(q,\lambda(q', a))\mu(q', a)
\end{eqnarray}
for all $q \in Q$, $q' \in Q^*$, and $a \in A$, $w \in A^*$. We assume that when a transducer has a sink, it is supposed to be reachable from any state, i.e., for every $q\in Q$ there is $u\in A^*$ such that $\mu(q,u)=e$.

\begin{definition}
Given an invertible transducer $\cal{A}$, the automaton group $G=G(\mathcal{A})$
is the permutation group of $A^*$ generated by the states
$\{q: q\in Q\}$ with the operation defined by (\ref{eq: action}), where two elements $g,h\in \tilde{Q}^{*}$
represent the same element in  $G$ if
$$
\lambda(g,u)=\lambda(h,u)\mbox{ for every }u\in A^*.
$$
\end{definition}

The following proposition shows the crucial role that fragile words play in automata group theory (see \cite{fragile} for more details). The idea is to extend the alphabet $A$ to $A\cup Q$ and add a sink in case the transducer does not have one (we write $Q\cup\{e\}$). The maps $\lambda$ and $\mu$ keep the original action on $A$ and they are defined on $Q\cup\{e\}$ as follows
\begin{eqnarray*}
\lambda(q,p)=p \ \ \forall q,p\in Q\cup\{e\},
\end{eqnarray*}
\begin{eqnarray}\label{formuladouble}
\forall q\in Q \ \ \mu(q,q)=e \ \ \textrm{ and }\ \ \mu(q,p)=q \ \forall p\in (Q\cup\{e\})\setminus\{q\}.
\end{eqnarray}
We denote this new invertible transducer by $\mathcal{A}'$. There is a connection with fragile words and minimal defining relations of the group $G(\mathcal{A}')$. Roughly speaking if $w$ is a minimal defining relation of $G(\mathcal{A}')$, then if $Q'$ is the set of states appearing in the word $w$, then $w_q=1$ in $FG_{Q'}$ for all $q\in Q'$, i.e., there is a subset $Q'$ of $Q$ such that $w$ is fragile in $FG_{Q'}$. More precisely we have the following proposition.

\begin{proposition}\label{prop: fragile and relations}
With the above notation, if $G(\mathcal{A}')$ is not free, then for any shortest non-trivial relation $w=1$ of  $G(\mathcal{A}')$ there exists $Q'\subseteq Q$ with $w\in\wt{Q'}^{*}$ such that $w$ is a fragile word in $FG_{Q'}$.
\end{proposition}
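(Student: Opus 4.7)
The plan is to exploit the special behaviour of $\mathcal{A}'$: its states act trivially on the new alphabet letters $Q\cup\{e\}$, and when a state $q$ reads the letter $q$ it ``burns out'' to the sink $e$. Prepending a letter $r$ to any input thus has, from the point of view of $w$'s action on the remaining input, the same effect as deleting $r$ and $r^{-1}$ from $w$. If $w$ is a shortest non-trivial relation, the deleted word is a strictly shorter relation, hence trivial in the free group by minimality; this is exactly fragility.

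First, fix a shortest non-trivial relation $w = q_1\ldots q_n \in \wt{Q}^*$ of $G(\mathcal{A}')$ and let $Q' \subseteq Q$ be the set of states appearing (positively or with an inverse) in $w$, so that $w \in \wt{Q'}^*$ and $w \neq 1$ in $FG_{Q'}$. Next, compute $\mu(w, r)$ for each $r \in Q'$. From (\ref{formuladouble}), together with the inverse-state rule $\mu(s^{-1}, \lambda(s, a)) = \mu(s,a)^{-1}$, one checks letter by letter that for every $p \in \wt{Q}$,
\[
\mu(p, r) \;=\; \begin{cases} e & \text{if } p \in \{r, r^{-1}\}, \\ p & \text{otherwise.} \end{cases}
\]
Since each $q_i$ acts trivially on $r$, the product formula $\mu(uv, a) = \mu(u, \lambda(v,a))\,\mu(v,a)$ telescopes to $\mu(w, r) = \mu(q_1, r)\ldots \mu(q_n, r)$, that is, the word obtained from $w$ by replacing every occurrence of $r$ or $r^{-1}$ by the sink letter $e$.

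Then, for any $u \in (A\cup Q\cup\{e\})^*$,
\[
\lambda(w, ru) \;=\; \lambda(w, r)\,\lambda(\mu(w, r), u) \;=\; r\cdot \lambda(\mu(w, r), u),
\]
and since $w$ acts as the identity, $\lambda(\mu(w, r), u) = u$ for every $u$. Hence $\mu(w, r)$ represents $1$ in $G(\mathcal{A}')$; because the sink $e$ itself represents the identity, dropping the $e$'s shows that $w_r$ (the word obtained from $w$ by erasing every $r$ and $r^{-1}$, in the fragile-word notation) is also a relation of $G(\mathcal{A}')$. Since $r\in Q'$ actually occurs in $w$, we have $|w_r|<|w|$, so by minimality of $w$ the shorter relation $w_r$ must already be trivial in $FG_{Q'}$. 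Doing this for every $r \in Q'$ shows that $w$ is fragile in $FG_{Q'}$.

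The main delicate point is the computation of $\mu(w, r)$: getting the signs right on inverse states and confirming that the product formula genuinely collapses to the naive ``delete $r, r^{-1}$'' operation. This hinges crucially on the triviality of $\lambda$ on the extended alphabet $Q\cup\{e\}$, so that the ``$\lambda$-shift'' occurring in the product rule $\mu(uv, a) = \mu(u, \lambda(v,a))\mu(v,a)$ has no effect and the restriction reduces to a purely local, letter-by-letter operation on $w$.
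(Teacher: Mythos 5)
Your proof is correct and follows essentially the same route as the paper: show that $\mu(w,r)$ is again a relation via $\lambda(w,ru)=r\,\lambda(\mu(w,r),u)$, identify $\mu(w,r)$ (after deleting sink letters) with $w_r$ using \eqref{formuladouble} and the triviality of $\lambda$ on $Q\cup\{e\}$, and invoke minimality of $w$ to conclude $w_r=1$ in the free group. The only microscopic imprecision is that $\mu(r^{-1},r)=e^{-1}$ rather than $e$, but since one erases occurrences of both $e$ and $e^{-1}$ (as the paper does), this changes nothing.
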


\begin{proof}
First of all we observe that, if $w$ is a relation of an automata group $G(\mathcal{A})$ then $\mu(w,u)$ is also a relation. In fact $\lambda(w,v)=v$ for every $v\in A^{\ast}$ and so, in particular
$$
\lambda(w, uv)=u\lambda(\mu(w,u),v)=uv.
$$
This implies $\lambda(\mu(w,u),v)=v$ for every $v\in A^{\ast}$.
Now in the group $G(\mathcal{A}')$ take any shortest non-trivial relation $w \in \wt{Q}^{*}$, and let $Q'$ be the support of the word $w$, i.e., the set of states (with their inverses) that appear in $w$. Then, by \eqref{formuladouble} and the fact that the sink $e$ acts like the identity we deduce that if from the word $\mu(w,q)$ we erase all the occurrences of $e$ and $e^{-1}$, then we obtain another defining relation $w'$ with shorter length than $w$ and so $\oo{w'}=1$, and this occurs for any $q\in Q'$, i.e., $w$ is fragile in $FG_{Q'}$.
\end{proof}

\if 0

\subsection{Eraser morphism for monoids}

Assume $M$ to be finitely presented and given by a presentation $\mathcal{P}=\la A|R\ra$. Throughout the paper we fix an ordering of the elements of $A$ and write $A=\{a_1, \ldots, a_n\}$, we use the notation $A_i$ to denote the set $A\setminus\{a_i\}$. We often use the following homomorphism $\theta_i:A^*\to A_i^*$ defined by 
$$a\theta_i = \left\{
\begin{array}{ll}
a&\mbox{ if }a \in A_i\\
1&\mbox{ if }a = a_i
\end{array}
\right.$$
We extend this map to a map $\oo{\theta_i}: A^*\times A^*\to A_i^*\times A_i^*$ by putting $(u,v)\oo{\theta_i}=(u\theta_i, v\theta_i)$. We define the $i$-th component of $M$ to be the monoid $M_i$ with presentation $\la A_i|R\oo{\theta_i}\ra$. The \emph{$i$th component of eraser morphism} is homomorphism:
$$
\epsilon^{\mathcal{P}}_i: M\to M_i
$$
defined by $(w R^{\sharp})\epsilon^{\mathcal{P}}_i=(w\theta_i)(R\oo{\theta_i})^{\sharp}$. Therefore, we may define the {\it eraser morphism}
$$
\epsilon^{\mathcal{P}}: M\to \prod_{i=1}^n M_i
$$
by $\epsilon^{\mathcal{P}}=\epsilon^{\mathcal{P}}_1\times \epsilon^{\mathcal{P}}_2\times\ldots\times\epsilon^{\mathcal{P}}_n$. It is possible to define the eraser morphism in a broader sense. 

\begin{definition}[Eraser morphism]
Let $V$ be a variety of groups or monoids. 
Let $F_V(A)$, $F_V(A_i)$ be the free objects with respect to the alphabet $A$ and $A_i$, respectively. Suppose that $M$ is generated by $A$, then the $i$th component $M_i$ may be defined as the pushout of the following diagram:
\[
\xymatrix{ 
F_V(A) \ar[r]^{\theta_i} \ar[d]_{\pi} &  F_V(A_i)\\ 
M  & 
}
\]
Therefore, if the pushout exists then we have the following commutative diagram:
\[
\xymatrix{ 
F_V(A) \ar[r]^{\theta_i} \ar[d]_{\pi} & F_V(A_i) \ar[d]^{\eta_i} \\ 
M \ar[r]_{\epsilon_i} & M_i
}
\]
where $\epsilon_i$ is the $i$th eraser morphism. Thus, we may define the eraser morphism as the morphism $\epsilon=\epsilon_1\times \epsilon_2\times\ldots\times\epsilon_n: M\to \prod_i M_i$.
\end{definition}
Note that the construction of the $i$-th component $M_i$ that we have given before in the category $\Mon$ is the pushout of the diagram $\theta_i:A^*\to A_i^*$, $\pi:A^*\to M$. Indeed, if we have the following commutative diagram:
\[
\xymatrix{ 
A^* \ar[r]^{\theta_i} \ar[d]_{\pi} & A_i^* \ar[d]^{\eta_i} \ar[ddr]^{\beta}& \\ 
M \ar[r]_{\epsilon_i} \ar[drr]_{\alpha}& M_i &\\
 & & Q
}
\]
where $Q=A_i^*/\rho$ for some congruence $\rho$, then we may define the morphism $\p:M_i\to Q$ by putting $(u(R\oo{\theta_i})^{\sharp})\p=u\rho$. Note that this is a well defined map since $R\oo{\theta_i}\subseteq \rho$, indeed, if $(u,v)\in R$ then $(u\pi)\alpha=(v\pi)\alpha$, and so by the commutativity of the diagram we get $(u\theta_i)\beta=(v\theta_i)\beta$, whence $(u,v)\oo{\theta_i}\in \rho$.

\fi 

\subsection{Eraser morphism for free groups}

\par 

Recall that $y_{a_i}$ is the word obtained from $y$ by deleting all the occurrences of $a_i$ and $a_i^{-1}$. We can think of the maps $y \rightarrow y_{a_i}$ as homomorphisms $$\theta_i: FG(a_1,\ldots, a_n) \rightarrow FG(a_1, \ldots, a_{i-1}, a_{i+1}, \ldots, a_n)$$ given by $(w)\theta_i=w_{a_i}$, for all $i=1,\ldots,n$. Note that $ w \langle \langle a_i \rangle \rangle = w_{a_i} \langle \langle a_i \rangle \rangle$ holds for every $w \in FG(a_1,\ldots,a_n)$. It follows that the kernel of $\theta_i$ is the normal closure of $a_i$, and the set of all fragile words in $F_n$ is the intersection of the kernels of all $\theta_i$, $i=1,\ldots,n$. We conclude that the set of all fragile words in $F_n$ forms a subgroup which is the intersection of normal closures of the generators: 
$$Fr_n=\langle \langle a_1 \rangle \rangle \cap \langle \langle a_2 \rangle \rangle \cap \ldots \cap \langle \langle a_n \rangle \rangle.$$

In this context the {\it eraser morphism} is the homomorphism $$\epsilon: FG(a_1, a_2, \ldots, a_n) \rightarrow FG(a_2,\ldots,a_n)\times FG(a_1,a_3,\ldots,a_n)\times \ldots \times FG(a_1,\ldots,a_{n-1})=G_n$$ given by 
$(x)\epsilon=((x)\theta_1,\ldots,(x)\theta_n)$. Then the set of all fragile words $Fr_n$ is the kernel of the eraser morphism $\epsilon$.

It is not difficult to see that $Fr_n$ is not finitely generated: it follows from the well-known fact that in free groups non-trivial normal finitely generated subgroups are of finite index (see, for instance, \cite{B}), and the observation that the subgroup $Fr_n$ has infinite index, since each $\langle \langle a_i \rangle \rangle$ has infinite index, and is non-trivial.
\par 
 The Schreier graph of $\langle \langle a_i \rangle \rangle$, denoted by $S_n^i$, can be obtained from the Cayley graph of  $F(a_1, \ldots, a_{i-1}, a_{i+1}, \ldots, a_n)$ (with respect to the generating set $\{a_1, a_1, \ldots, a_{i-1}, a_{i+1}, \ldots, a_n \}$) by adding a loop at every vertex labelled by $a_i$, and the Schreier graph for $Fr_n$ is the product graph $S_n$ of the Schreier graphs $S_n^i$, $1 \leq i \leq n$, see \cite[Section 9]{KM}.

\par 

Note that $S_2$ is isomorphic to the Cayley graph of $\mathbb{Z}^2$, i.e. is a grid with edges labelled by $a_1$ and $a_2$, and so $Fr_2$ is just the commutator subgroup of $FG_2$. 
However, for bigger $n$ the situation is much more complicated. In particular, it's not difficult to see that for $n \geq 3$  $Fr_n$ is strictly contained in the commutator subgroup of $FG_n$. For example, the word $a_1a_2a_3a_1^{-1}a_2^{-1}a_3^{-1}$ is in the commutator subgroup of $FG_3$, but not fragile. 

\par 


Let $K_n \leq G_n$ be the image of the eraser morphism $\epsilon$, so 
 $$K_n \cong FG_n / Fr_n.$$
We have a distinguished generating set $X_n$ of $K_n$ consisting of the elements $(a_1)\epsilon=(1,a_1,\ldots,a_1),(a_2)\epsilon=(a_2,1,a_2,\ldots,a_2), \ldots, (a_n)\epsilon=(a_n,\ldots,a_n,1)$. It follows that the Cayley graph of $K_n$ with respect to the generating set $X_n$ is isomorphic to the Schreier graph $S_n$. 

It is interesting to study the subgroups $K_n$, i.e. the images of eraser homomorphisms, in order to understand better the structure of fragile words in $FG_n$ for $n \geq 3$. Note that in general finitely generated subgroups in direct products of (two or more) free groups can behave quite wildly, in particular, they can even have unsolvable membership problem \cite{Mih}. However, we  show that $K_n$ has membership problem solvable by a simple algorithm. Moreover, $K_n$ is undistorted (i.e., quasi-isometrically embedded) in $G_n$. We also show that $K_n$ is not finitely presented for $n \geq 3$.


Recall that the word metric on a finitely generated group $G$ with respect to a finite generating set $S$ is defined as follows: the distance between $g,h \in G$ is the minimal length of a word expressing the element $g^{-1}h$ as a product of  elements in $S \cup S^{-1}$.
A subgroup $H$ in a group $G$ is called {\it undistorted} if there exists $C>0$ and some finite generating sets $X$ of $G$ and $Y$ of $H$ such that for every $h \in H$ we have $|h|_Y \leq C|h|_X$, where $|h|_X$, $|h|_Y$ denote the length of $h$ in the word metric given by $X$ and $Y$ respectively. 
A subgroup $H$ is undistorted in $G$ if and only if its natural embedding into $G$ is a quasi-isometric embedding  for the word metrics $|\cdot |_Y$ and $|\cdot |_X$. In particular, being undistorted does not depend on the choice of the generating sets $X$ and $Y$. Note that undistorted subgroups always have solvable membership problem. See \cite[page 506]{BH} for more information about subgroup distortion.

\begin{theorem}\label{T1}
	Suppose that $n \geq 2$. Let $g=(w^{(1)},w^{(2)},\ldots, w^{(n)})$ be an element of $G_n$, in the above notations. Then $g \in K_n$ if and only if  $w^{(i)}_{a_j}=w^{(j)}_ {a_i}$ for all $i,j=1, \ldots, n$, $i \neq j$. 
	\\ In particular, the membership problem for $K_n$ in $G_n$ is solvable for all $n \geq 2$. Moreover, $K_n$ is undistorted in $G_n$ for all $n \geq 2$.
\end{theorem}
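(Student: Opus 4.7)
The necessity direction is immediate: if $g=\epsilon(x)$ the erasers $\theta_i,\theta_j$ commute as homomorphisms into $FG(A\setminus\{a_i,a_j\})$, giving $w^{(i)}_{a_j}=\theta_j\theta_i(x)=\theta_i\theta_j(x)=w^{(j)}_{a_i}$.

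For sufficiency, my plan is to induct on $n$, simultaneously producing a lift $x\in FG_n$ of controlled reduced length. The base case $n=2$ is handled by $x=w^{(2)}w^{(1)}$: since $w^{(1)}\in FG(a_2)$ and $w^{(2)}\in FG(a_1)$ use disjoint generators, the equalities $\theta_i(x)=w^{(i)}$ for $i=1,2$ follow by inspection and $|x|\leq|w^{(1)}|+|w^{(2)}|$. For the inductive step ($n\geq 3$) I would exploit the free-product splittings $FG_n=FG(a_1,\ldots,a_{n-1})*\langle a_n\rangle$ and $FG(A\setminus\{a_i\})=FG(A\setminus\{a_i,a_n\})*\langle a_n\rangle$ for $i<n$. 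First, apply the inductive hypothesis (for the $n-1$ generators $a_1,\ldots,a_{n-1}$) to the $(n-1)$-tuple $((w^{(1)})_{a_n},\ldots,(w^{(n-1)})_{a_n})$, which is compatible by iterated compatibility of the original tuple; by the relations $w^{(n)}_{a_i}=w^{(i)}_{a_n}$, the element $w^{(n)}\in FG(a_1,\ldots,a_{n-1})$ itself is a valid lift of that sub-tuple. Writing $w^{(n)}$ as a reduced word $c_1c_2\cdots c_m$, I then search for $x$ of the form
\[
x = a_n^{\alpha_0}\, c_1\, a_n^{\alpha_1}\, c_2\, \cdots\, c_m\, a_n^{\alpha_m}
\]
for integers $\alpha_k\in\mathbb{Z}$ to be determined; this automatically satisfies $\theta_n(x)=w^{(n)}$.

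The main obstacle is choosing the $\alpha_k$'s so that $\theta_i(x)=w^{(i)}$ holds for every $i<n$ simultaneously. After applying $\theta_i$, the surviving letters $c_k$ (those with $c_k\notin\{a_i,a_i^{-1}\}$) reduce to $\theta_i(w^{(n)})=\theta_n(w^{(i)})$, so the $a_n$-free part of $\theta_i(x)$ is automatically correct; what remains is a system of linear equations requiring certain partial sums of the $\alpha_k$'s (over the gaps between surviving letters) to match the $a_n$-exponents in the free-product normal form of $w^{(i)}$. The delicate point is that free-group reductions may collapse two surviving $c_k$'s into a single syllable precisely when the intervening $\alpha$-sum vanishes, so different $i$'s may impose different partitions of the $\alpha$-variables. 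I would verify consistency by tracking how the pairwise equalities $w^{(i)}_{a_j}=w^{(j)}_{a_i}$ synchronise these partitions: the partial sums at positions surviving both $\theta_i$ and $\theta_j$ are jointly determined by this common projection, which propagates to pin down the remaining unknowns. The construction produces a solution with $|x|\leq C_n\sum_i|w^{(i)}|$ for a constant $C_n$ depending only on $n$.

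Decidability of membership then follows immediately, since compatibility reduces to finitely many word-problem comparisons in free groups. Undistortion follows by combining the length bound $|x|\leq C_n\sum_i|w^{(i)}|$ with the trivial converse $|x|\geq\max_i|w^{(i)}|$ valid for any lift $x$, which together yield a quasi-isometric embedding $K_n\hookrightarrow G_n$ with respect to the natural word metrics.
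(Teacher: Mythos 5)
Your necessity direction and the base case $n=2$ are fine, and the way you would deduce decidability and undistortion from a linear-length lift is exactly right. The problem is the inductive step: the ansatz $x=a_n^{\alpha_0}c_1a_n^{\alpha_1}c_2\cdots c_ma_n^{\alpha_m}$, with $c_1\cdots c_m$ a fixed reduced word for $w^{(n)}$, is too restrictive, and the sufficiency claim fails for it. Concretely, take $n=3$ and $(w^{(1)},w^{(2)},w^{(3)})=(a_2a_3a_2^{-1}a_3^{-1},\,1,\,1)$. This tuple satisfies all the compatibility conditions and does lie in $K_3$, since it equals $\epsilon(a_2a_3a_2^{-1}a_3^{-1})$. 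But in your step (with $a_n=a_3$) we have $w^{(3)}=1$, hence $m=0$ and $x=a_3^{\alpha_0}$, whose image under $\theta_1$ is $a_3^{\alpha_0}\neq a_2a_3a_2^{-1}a_3^{-1}$ for every $\alpha_0$. The underlying issue is that the full preimage $\theta_n^{-1}(w^{(n)})$ is the coset $w^{(n)}\langle\langle a_n\rangle\rangle$, whose elements in general need letters from $a_1,\ldots,a_{n-1}$ that cancel only after $a_n$ is erased; interleaving powers of $a_n$ between the letters of one fixed reduced word for $w^{(n)}$ produces only those lifts whose $FG(a_1,\ldots,a_{n-1})$-syllables concatenate to $w^{(n)}$ without cancellation, a proper subset. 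So the linear system in the $\alpha_k$ can be unsolvable even when the tuple is in the image, and the ``synchronisation of partitions'' argument --- which you leave as a sketch and which is the genuinely delicate point --- cannot repair this without enlarging the ansatz (allowing syllables of $w^{(n)}$ to be split and to cancel), at which point the bookkeeping and the length bound are no longer controlled by what you have written.

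For comparison, the paper sidesteps all of this with a direct iterative construction: set $x^{(1)}=w^{(1)}$ and $x^{(k+1)}=x^{(k)}\bigl(x^{(k)}_{a_{k+1}}\bigr)^{-1}w^{(k+1)}$; a short induction using the identities $(uv)_{a_i}=u_{a_i}v_{a_i}$, $(u^{-1})_{a_i}=(u_{a_i})^{-1}$ and the hypotheses $w^{(i)}_{a_j}=w^{(j)}_{a_i}$ shows $x^{(n)}_{a_i}=w^{(i)}$ for all $i$, and the construction visibly gives a lift of length linear in $\sum_i|w^{(i)}|$, which yields both decidability and undistortion at once. If you want to salvage your free-product viewpoint, you would need to allow the $FG(a_1,\ldots,a_{n-1})$-part of the lift to be an arbitrary word projecting to $w^{(n)}$ (not a fixed reduced spelling) and prove existence and a length bound for a consistent choice, which is substantially harder than the correction-term trick above.
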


\begin{proof}



By definition of $K_n$, an element $g=(w^{(1)},w^{(2)},\ldots, w^{(n)})$ belongs to $K_n$ if and only if there exists $w \in FG_n$ such that $w^{(i)}=w_{a_i}$ for all $i=1, \ldots, n$. If such $w$ exists, it follows that
 $w^{(i)}_{a_j}=w^{(j)}_ {a_i}$ for all $i,j=1, \ldots, n$, $i \neq j$.

Suppose now that $w^{(i)}_{a_j}=w^{(j)}_{a_i}$ for all $i,j=1, \ldots, n$. Note that $w^{(i)}$ contains no $a_i$, so $w^{(i)}_{a_i}=w^{(i)}$, for all $i=1, \ldots, n$.
We start constructing the desired word $w$ in $FG_n$ by induction, defining elements $x^{(1)},\ldots,x^{(n)}$. First let $x^{(1)}=w^{(1)}$. Then $x^{(1)}_{a_1}=w^{(1)}_{a_1}=w^{(1)}$. Now let 
 $$x^{(2)}=x^{(1)}(x_{a_2}^{(1)})^{-1}w^{(2)}=w^{(1)}(w_{a_2}^{(1)})^{-1}w^{(2)}.$$ Then, using our condition that $w^{(1)}_{a_2}=w^{(2)}_{a_1}$, we have 
 $$x^{(2)}_{a_1}=w^{(1)}_{a_1}((w_{a_2}^{(1)})^{-1})_{a_1}w^{(2)}_{a_1}=w^{(1)}_{a_1}((w_{a_1}^{(2)})^{-1})_{a_1}w^{(2)}_{a_1}=w^{(1)}_{a_1}(w_{a_1}^{(2)})^{-1}w^{(2)}_{a_1}=w^{(1)}_{a_1}=w^{(1)},$$
 
 $$x^{(2)}_{a_2}=  w^{(1)}_{a_2}(w_{a_2}^{(1)})^{-1}w^{(2)}_{a_2}=w^{(2)}_{a_2}=w^{(2)}.$$

Suppose by induction that we have constructed $x^{(k)}$, $k<n$, such that $x^{(k)}_{a_i}=w^{(i)}$ for $1 \leq i \leq k$.
We now define $x^{(k+1)}$ as follows:

$$x^{(k+1)}=x^{(k)}(x^{(k)}_{a_{k+1}})^{-1}w^{(k+1)}.$$

Now, for all $i=1, \ldots, k$ we have, using induction hypothesis and our condition that $w^{(i)}_{a_j}=w^{(j)}_{a_i}$: 

\begin{align*}
x^{(k+1)}_{a_i}=x^{(k)}_{a_i}((x^{(k)}_{a_{k+1}})^{-1})_{a_i}w^{(k+1)}_{a_i}=x^{(k)}_{a_i}((x^{(k)}_{a_i})^{-1})_{a_{k+1}}w^{(k+1)}_{a_i}=x^{(k)}_{a_i}(w^{(i)}_{a_{k+1}})^{-1}w^{(k+1)}_{a_i}= \\
x^{(k)}_{a_i}(w^{(k+1)}_{a_i})^{-1}w^{(k+1)}_{a_i}=x^{(k)}_{a_i}=w^{(i)}, 
\end{align*}

$$x^{(k+1)}_{a_{k+1}}= x^{(k)}_{a_{k+1}}(x^{(k)}_{a_{k+1}})^{-1}w^{(k+1)}_{a_{k+1}}= w^{(k+1)}_{a_{k+1}}=w^{(k+1)}.$$ 

This shows that for $x^{(n)}$ we have $x^{(n)}_{a_i}=w^{(i)}$ for all $1 \leq i \leq n$, and so we can take $w=x^{(n)}$, as desired.

Finally, it is easy to see that in the above construction the length of $w$ is linear in terms of the sum of the lengths of $w^{(1)},w^{(2)},\ldots, w^{(n)}$. This exactly means that $K_n$ is undistorted in $G_n$.

\end{proof}

For example,
consider $(w^{(1)},w^{(2)},w^{(3)})=(bcbc^2,ac^2ac,a^2b^2)$. Then $x^{(1)}=bcbc^2$, $x^{(2)}=(bcbc^2)c^{-3}(ac^2ac)=bcbc^{-1}ac^2ac$, and $w=x^{(3)}=(bcbc^{-1}ac^2ac)(a^{-2}b^{-2})(a^2b^2)$, which has the desired projections.

Note also that, even though the subgroups $K_n$ are quasi-isometrically embedded into $G_n$, the constants corresponding to these embeddings might grow exponentially fast in the above construction with the increase of $n$. One can ask if one can construct $w$ from $w^{(1)},w^{(2)},\ldots, w^{(n)}$ in a more optimal way. Indeed, it would be interesting to know what's the minimal possible length of the preimage in $FG_n$ of a given element in $K_n$.

\begin{theorem}\label{T2}
	The group $K_n$ is not finitely presented for $n \geq 3$.
\end{theorem}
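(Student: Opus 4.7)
The strategy is to derive a contradiction from the assumption that $K_n$ is finitely presented by showing that the abelian group $Fr_n/[FG_n,Fr_n]$ has infinite rank. Since $FG_n$ is finitely presented and $K_n = FG_n/Fr_n$, finite presentability of $K_n$ would force $Fr_n$ to be finitely generated as a normal subgroup of $FG_n$, and the finitely many normal generators would project to a finite generating set of the abelian group $Fr_n/[FG_n,Fr_n]$ (all $FG_n$-conjugates coincide modulo $[FG_n,Fr_n]$). Equivalently, by Hopf's formula combined with the containment $Fr_n \subseteq [FG_n,FG_n]$ (fragile words have trivial image in the abelianization $\mathbb{Z}^n$), this group is $H_2(K_n;\mathbb{Z})$, which would therefore be finitely generated.

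To detect infinite rank, I pass to the associated graded with respect to the lower central filtration $\{\gamma_d(FG_n)\}$, whose graded Lie algebra is the free Lie algebra $L_n$ on $a_1,\ldots,a_n$ over $\mathbb{Z}$, multi-graded by the number of occurrences of each generator. Let $\mathcal{F}_n \subseteq L_n$ be the multi-graded Lie ideal consisting of the components $L_n^{(\alpha_1,\ldots,\alpha_n)}$ with all $\alpha_i \geq 1$. I claim that $\mathrm{gr}(Fr_n) = \mathcal{F}_n$: the inclusion $\subseteq$ is immediate from compatibility of each $\theta_i$ with the lower central filtration, while the reverse inclusion follows from the observation that every iterated commutator of letters in which each $a_i$ occurs at least once is a fragile group element (erasing $a_i$ collapses some innermost bracket to $1$, and this propagates outward), and such iterated commutators span $\mathcal{F}_n$ in every multi-degree. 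A standard leading Lie-term computation for $[g,r]$ with $r \in Fr_n$ yields $\mathrm{gr}([FG_n,Fr_n]) \subseteq [L_n,\mathcal{F}_n]$. Consequently the $d$th associated graded piece of $Fr_n/[FG_n,Fr_n]$ surjects onto $\mathcal{F}_n^{(d)}/[L_n,\mathcal{F}_n]^{(d)}$.

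The heart of the argument is then a Witt-formula calculation in the multi-degree $(k,1,1,\ldots,1)$ for each $k \geq 1$. Since every coordinate is positive, $\mathcal{F}_n^{(k,1,\ldots,1)} = L_n^{(k,1,\ldots,1)}$, of rank $(k+n-2)!/k!$ by Witt. A bracket $[x,y] \in [L_n,\mathcal{F}_n]^{(k,1,\ldots,1)}$ with $y \in \mathcal{F}_n$ forces the last $n-1$ coordinates of $y$ to equal $1$ (being both $\geq 1$ and $\leq 1$), hence $x$ to have multi-degree $(m,0,\ldots,0)$ with $m \leq k-1$; but $L_n^{(m,0,\ldots,0)} = 0$ for $m \geq 2$, so $x = a_1$ and $y \in \mathcal{F}_n^{(k-1,1,\ldots,1)}$. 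Thus $[L_n,\mathcal{F}_n]^{(k,1,\ldots,1)}$ has rank at most $(k+n-3)!/(k-1)!$, and the difference of ranks is $(n-2)(k+n-3)!/k! \geq n-2 \geq 1$ for $n \geq 3$. This gives a nontrivial contribution to the graded piece of $Fr_n/[FG_n,Fr_n]$ in each degree $d = k+n-1$, so infinitely many, contradicting finite generation.

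The main obstacle is the verification of the two Lie-algebraic identifications $\mathrm{gr}(Fr_n) = \mathcal{F}_n$ and $\mathrm{gr}([FG_n,Fr_n]) \subseteq [L_n,\mathcal{F}_n]$: this is the step where the group-theoretic definition of fragility is translated into the multi-graded Lie-algebraic framework. The second inclusion is a standard Magnus-expansion leading-term calculation; the subtler half of the first is the combinatorial observation that iterated left-normed commutators realise every multi-graded Lie class with strictly positive exponents as leading Lie terms of honest fragile words in $FG_n$.
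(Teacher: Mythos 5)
Your overall strategy is genuinely different from the paper's (which verifies that $K_n$ is a full subdirect product and invokes Theorem D of \cite{BHMS} to reduce to showing that the projection to a pair of factors has infinite index). Your reduction to showing that $Fr_n/[FG_n,Fr_n]\cong H_2(K_n;\Z)$ has infinite rank is sound, as is the identification $\mathrm{gr}(Fr_n)=\mathcal{F}_n$ (both inclusions check out: the $\theta_i$ act on the graded Lie algebra by killing the multi-degrees with $\alpha_i\geq 1$ and are injective on the rest, which gives $\subseteq$; basic commutators of multi-degree with all $\alpha_i\geq 1$ are fragile and span, which gives $\supseteq$). The Witt count in multi-degree $(k,1,\ldots,1)$ is also correct.

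The gap is the second Lie-algebraic identification, $\mathrm{gr}([FG_n,Fr_n])\subseteq [L_n,\mathcal{F}_n]$, which you dismiss as ``a standard Magnus-expansion leading-term calculation.'' The Magnus expansion gives you only the \emph{easy} inclusion $[L_n,\mathcal{F}_n]\subseteq \mathrm{gr}([FG_n,Fr_n])$: the leading term of a single commutator $[g,r]$ is $[\bar g,\bar r]$. What you need is the reverse inclusion, and there the difficulty is that a general element of $[FG_n,Fr_n]\cap\gamma_e$ is a product $\prod_j[g_j,r_j]^{\pm 1}$ whose lowest-degree Lie terms may cancel; its actual leading term in degree $e$ then involves the higher homogeneous components of the Magnus series of the factors (terms such as $[G_1,R_{d+1}]$, $[G_2,R_d]$ and $-G_1[G_1,R_d]$ in the notation $\mu(g)=1+G$, $\mu(r)=1+R$), which are not individually Lie elements and are not visibly contained in $[L_n,\mathcal{F}_n]$. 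This is precisely the phenomenon underlying Labute's theory of strongly free (mild) sequences: for a normal subgroup $H$ of a free group, $\mathrm{gr}(H)$ in general strictly contains the Lie ideal generated by the leading terms of a normal generating set, and equality is a nontrivial hypothesis, not a formal consequence. Since your rank estimate for $Q_d/Q_{d+1}$ is $\mathrm{rank}\,\mathcal{F}_n^{(d)}-\mathrm{rank}\,\mathrm{gr}_d([FG_n,Fr_n])$, the absence of any proven upper bound on $\mathrm{gr}_d([FG_n,Fr_n])$ other than $\mathcal{F}_n^{(d)}$ itself means the positivity of this difference --- and hence the contradiction --- is not established. Either prove the inclusion for this specific $N=Fr_n$ (the multi-grading may help, but this needs an actual argument controlling cancellation in products of commutators), or replace this step; as it stands the proof is incomplete at its crucial point, whereas the paper's argument via \cite{BHMS} bypasses all of this.
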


\begin{proof}
	We deduce this result from a general theorem about finitely presented subgroups in direct products of free groups in \cite{BHMS}, namely Theorem D. We use some notions from \cite{BHMS}. First note that $K_n$ is a {\it subdirect} product, i.e., the projection of $K_n$ to each factor of $G_n$ is surjective. Indeed, for every element $x \in FG(a_2,a_3, \ldots, a_n)$ we can consider an element $x_0$ in $FG(a_1,a_2,\ldots,a_n)$ which is the same as a word as $x$, and then $\varphi(x_0)$ gives the desired element in $K_n$ with projection $x$ to the first component, and similar for other components. \par 
	Furthermore, $K_n$ is a {\it full} subdirect product, meaning that the intersection of $K_n$ with each of the factors is non-trivial. Indeed, it follows from Theorem \ref{T1} that the intersection of $K_n$ with the first factor of $G_n$ is precisely the subgroup of all fragile words in $FG(a_2,a_3, \ldots,a_n)$, which is non-empty, and similarly for other components.
	\par 
	Note also that, since $K_n$ is a subgroup of a direct product of free groups, it is residually free, i.e., every non-trivial element can be mapped via a homomorphism to a non-trivial element of some free group. Since $K_n$ is a full subdirect product, its natural embedding into $G_n$ is neat in the sense of \cite{BHMS} (as defined just before Theorem D), so if $K_n$ was finitely presented, the images of the projections of $K_n$ to pairs of factors in $G_n$ would all have finite index in those direct products of two free groups (we apply the implication (1) implies (5) in Theorem D of \cite{BHMS}, and use that free groups are certainly among limit groups, i.e. are fully residually free). However, we now show this is not the case.
	\par Consider the projection $L_n$ of $K_n$ to the direct product of the first two factors of $G_n$. Let $(x,y)$ be an element of $L_n$, where $x \in FG(a_2,\ldots,a_n)$, $y \in FG(a_1,a_3,\ldots,a_n)$. Then by Theorem \ref{T1} we have $x_{a_2}=y_{a_1}$. So if $M_n$ denotes the subgroup of  $FG(a_2,\ldots,a_n) \times FG(a_1,a_3,\ldots,a_n)$ consisting of all elements $(x,y)$ with $x_{a_2}=y_{a_1}$, then $L_n \subseteq M_n$. However, $n \geq 3$ and the elements $(1,a_3^k)$ are clearly in different cosets of $M_n$ in $G_n$ for different $k$, so $M_n$ has infinite index in $G_n$, and so also $L_n$ has infinite index in $G_n$, a contradiction. Thus $K_n$ is not finitely presented for $n \geq 3$.
\end{proof}

    We remark that these results almost certainly allow straightforward generalizations to some other groups apart from free groups.

\if 0

\subsection{Free monoids}
\textcolor{red}{membership problem trivially solvable but we have this result because from the computationally point of view is better. Should we leave this result, or should erase it because it is trivial?}

In this setting we consider a monoid $A^{*}$ on the alphabet $A=\{a_1,a_2,\ldots,a_n\}$ and put $A_i=A\setminus\{a_i\}$. We define the following map
$$
a\theta_i = \left\{
\begin{array}{ll}
a&\mbox{ if }a \in A_i\\
1&\mbox{ if }a=a_i
\end{array}
\right.
$$
and extend it to $A^*$ in the obvious way. In this setting the eraser morphism is defined by
$$
\epsilon: A^{*}\to  A_{1}^{*}\times  A_{2}^{*}\times \ldots  \times A_{n}^{*}
$$
by $(u)\epsilon=(u\theta_1, u\theta_2, \ldots, u\theta_n)$. For an element $(x_{1}, x_{2}, \ldots, x_{n})\in A_{1}^{*}\times  A_{2}^{*}\times \ldots  \times A_{n}^{*}$ and a letter $a\in A$ we denote by $a^{-1}(x_{1}, x_{2}, \ldots, x_{n})$ the $n$-tuple $(y_{1}, y_{2}, \ldots, y_{n})$ such that $x_{i}=ay_{i}$ if $x_{i}$ starts with the letter $a$, otherwise $x_{i}=y_{i}$. If there is an index $k$ such that for all $i\neq k$ $x_{i}=ay_{i}$, while the first letter of $x_{k}$ is different from $a$, then we say that $(x_{1}, x_{2}, \ldots, x_{n})$ is {\it $a$-reducible}. We have the following result. 
\begin{proposition}
Suppose that $n\ge 3$, then $(v_{1}, \ldots, v_{n})\in {\rm Im}\,\epsilon$ if and only if there is an $a\in\Sigma$ such that $(v_{1}, \ldots, v_{n})$ is $a$-reducible, and $a^{-1}(v_{1}, \ldots, v_{n})\in {\rm Im}\,\epsilon$. Furthermore, $\epsilon$ is injective, and the membership problem for ${\rm Im}\,\epsilon$ is solvable in linear time. 
\end{proposition}
\begin{proof}
Assume $(v_{1}, \ldots, v_{n})\in {\rm Im}\,\epsilon$ and let $w\in\Sigma^{*}$ such that $(w)\epsilon=(v_{1}, \ldots, v_{n})$. Let $a_{i}$ be the first letter occurring in $w$, without loss of generality we may assume this letter to be $a_{1}$. Then, clearly $a_{1}$ does not occur in $v_{1}$, while it occurs as the first letter in all $v_{2}, \ldots, v_{n}$, whence $(v_{1}, \ldots, v_{n})$ is $a_{1}$-reducible. Moreover, if $w'$ is a the word such that $w=a_{1}w'$, then we have $(w')\epsilon=a_{1}^{-1}(v_{1}, \ldots, v_{n})$. Hence, $a_{1}^{-1}(v_{1}, \ldots, v_{n})\in {\rm Im}\,\epsilon$. Conversely, if $(v_{1}, \ldots, v_{n})$ is $a_{1}$-reducible, and $a_{1}^{-1}(v_{1}, \ldots, v_{n})\in {\rm Im}\,\epsilon$, then there is a word $w'$ such that $(w')\epsilon=a_{1}^{-1}(v_{1}, \ldots, v_{n})$, and so $(a_{1}w')\epsilon=(v_{1}, \ldots, v_{n})$, i.e., $(v_{1}, \ldots, v_{n})\in {\rm Im}\,\epsilon$. Note that since $n\ge 3$ if $(v_{1}, \ldots, v_{n})\in {\rm Im}\,\epsilon$ then by the previous characterization there is a unique letter $a$ such that $(v_{1}, \ldots, v_{n})$ is $a$-reducible, therefore if by induction we assume that there is a unique $w'$ such that $(w')\varepsilon=a^{-1}(v_{1}, \ldots, v_{n})$ then $aw'$ is the the unique preimage of $(v_{1}, \ldots, v_{n})$. Finally, note that the membership problem is decidable in linear time as follows. If $(v_{1}, \ldots, v_{n})=(1, 1, \ldots, 1)$, then return $(v_{1}, \ldots, v_{n})\in {\rm Im}\,\epsilon$. Otherwise, check if there is a letter $a\in\Sigma$ such that $(v_{1}, \ldots, v_{n})$ is $a$-reducible. In case such letter exists, then apply inductively the algorithm to $a^{-1}(v_{1}, \ldots, v_{n})$, if the algorithm return $a^{-1}(v_{1}, \ldots, v_{n})\in {\rm Im}\,\epsilon$, then return $(v_{1}, \ldots, v_{n})\in {\rm Im}\,\epsilon$, otherwise exit and return $(v_{1}, \ldots, v_{n})\notin {\rm Im}\,\epsilon$. Note also that this algorithm requires a linear time execution $\mathcal{O}(|v_{1}|+|v_{2}|+\ldots +|v_{n}|)$.
\end{proof}
Note that if $\Sigma=\{a,b\}$ the map $\varphi$ is not injective anymore. Indeed, for any $n,m\ge 0$ $\varphi^{-1}(b^{n}, a^{m})$ is the set of words on $\Sigma^{*}$ such that the number of occurrences of $a$ is $m$ and the number of occurrences of $b$ is $n$.

\fi 

\subsection{Eraser morphism for inverse monoids}

Let $\P = {\rm Inv}\langle A \mid R\rangle$ be a finite presentation of an inverse monoid $M=\wt{A}^*/(\rho_A \cup R)^{\sharp}$, and fix an enumeration $a_1,\ldots,a_m$ of the elements of $A$. For $i = 1,\ldots,m$, write $A_i = A \setminus \{ a_i \}$ and let $\theta_i:\wt{A}^* \to \wt{A_i}^*$ be the homomorphism defined by
$$a\theta_i = \left\{
\begin{array}{ll}
a&\mbox{ if }a \in \wt{A_i}\\
1&\mbox{ if }a \in \{ a_i,a_i\inv \}
\end{array}
\right.$$
We define also $\oo{\theta}_i: \wt{A}^* \times \wt{A}^* \to \wt{A_i}^* \times \wt{A_i}^*$ by $(u,v)\oo{\theta}_i = (u\theta_i,v\theta_i)$.

Let $\epsilon_i^{\P}: \wt{A}^*/(\rho_A \cup R)^{\sharp} \to \wt{A_i}^*/(\rho_{A_i} \cup R\oo{\theta}_i)^{\sharp}$ be defined by
$$(w(\rho_A \cup R)^{\sharp})\epsilon_i^{\P} = (w\theta_i)(\rho_{A_i} \cup R\oo{\theta}_i)^{\sharp}.$$
We claim that $\epsilon_i^{\P}$ is a well-defined surjective homomorphism.

Indeed, we may define a homomorphism $\p_{i}: \wt{A}^* \to \wt{A_i}^*/(\rho_{A_i} \cup R\oo{\theta}_i)^{\sharp}$ by $w\p_i = (w\theta_i)(\rho_{A_i} \cup R\oo{\theta}_i)^{\sharp}$. Since $\ker\p_i$ is a congruence, it suffices to show that 
\beq
\label{wde}
\rho_{A} \cup R \subseteq \ker\p_i.
\eeq

Let $u,v \in \wt{A}^*$. Since $(uu\inv u)\theta_i = (u\theta_i)(u\theta_i)\inv (u\theta_i)\, \rho_{A_i} \, u\theta_i$, we get
$$(uu\inv u)\p_i = ((uu\inv u)\theta_i)(\rho_{A_i} \cup R\oo{\theta}_i)^{\sharp} = (u\theta_i)(\rho_{A_i} \cup R\oo{\theta}_i)^{\sharp} = u\p_i.$$
Similarly, $(uu\inv vv\inv)\p_i = (vv\inv uu\inv)\p_i$ and so $\rho_{A} \subseteq \ker\p_i$.

On the other hand, given $(r,s) \in R$, we have $(u\theta_i,v\theta_i) \in R\oo{\theta}_i$ and therefore $u\p_i = v\p_i$. Thus (\ref{wde}) holds and so $\epsilon_i^{\P}$ is a well-defined surjective homomorphism. Henceforth we put $M_{i}=\wt{A_i}^*/(\rho_{A_i} \cup R\oo{\theta}_i)^{\sharp}$. 

In the context we may define the {\em eraser morphism} as the morphism:
$$\epsilon^{\P}:M \to \prod_{i=1}^m M_{i}$$
defined by 
$$(w(\rho_A \cup R)^{\sharp})\epsilon^{\P} = ((w(\rho_A \cup R)^{\sharp})\epsilon_i^{\P})_i.$$
Note that the enumeration of the letters of $A$ is irrelevant since we get isomorphic direct products in the image.

Our main objective is to study the eraser homomorphism, namely its image and its kernel.

\subsubsection{The membership problem for the image of the eraser morphism}




Since a direct product of finitely many finite-$\J$-above monoids is finite-$\J$-above, and free inverse monoids are finite-$\J$-above, Corollary \ref{cor2} implies the following.
\begin{corollary}
\label{decimage}
Let $\P = {\rm Inv}\langle A \mid R\rangle$ be a finite presentation with $A = \{ a_1,\ldots,a_m\}$.  If $\wt{A_i}^*/(\rho_{A_i} \cup R\oo{\theta}_i)^{\sharp}$ is finite-$\J$-above for $i = 1,\ldots,m$, then the membership problem is decidable for ${\rm Im}\,\epsilon^{\P} \leq \prod_{i=1}^m M_i$. In particular, this holds if $\P$ is a free inverse monoid. 
\end{corollary}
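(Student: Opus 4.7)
The plan is to deduce this corollary as a direct application of Corollary \ref{cor2}, taking the ambient monoid to be $N = \prod_{i=1}^{m} M_i$ and identifying $\text{Im}\,\epsilon^{\P}$ as a finitely generated submonoid of $N$. So the work boils down to verifying the three hypotheses: that $N$ is a finitely presented inverse monoid, that $N$ is finite-$\J$-above, and that $\text{Im}\,\epsilon^{\P}$ is a finitely generated submonoid of $N$.

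First I would observe that each $M_i = \wt{A_i}^*/(\rho_{A_i} \cup R\oo{\theta}_i)^{\sharp}$ is by construction finitely presented as an inverse monoid, and then note that a direct product of finitely many finitely presented inverse monoids admits a natural finite inverse monoid presentation: take disjoint copies $A^{(1)},\ldots,A^{(m)}$ of the alphabets $A_1,\ldots,A_m$, keep the component relations $R\oo{\theta}_i$ (rewritten over $A^{(i)}$), and add the commuting relations $xy = yx$ for $x \in A^{(i)}$, $y \in A^{(j)}$ with $i \neq j$. Next, the hypothesis that each $M_i$ is finite-$\J$-above, combined with the observation recorded in Section 3.1 that finite direct products preserve the finite-$\J$-above property, yields that $N$ is finite-$\J$-above.

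For the third ingredient, set $\tau = (\rho_A \cup R)^{\sharp}$, so that $M$ is generated as a monoid by the finite set $\wt{A}\tau = \{ a\tau \mid a \in \wt{A}\}$. Applying the homomorphism $\epsilon^{\P}$, the image $\text{Im}\,\epsilon^{\P}$ is then generated as a submonoid of $N$ by the finite set $\{(a\tau)\epsilon^{\P} \mid a \in \wt{A}\}$. Corollary \ref{cor2} now applies and gives decidable membership problem for $\text{Im}\,\epsilon^{\P}$ in $N$. The ``in particular'' clause follows by taking $R = \emptyset$: then $M_i = \wt{A_i}^*/\rho_{A_i} = FIM_{A_i}$, and free inverse monoids are finite-$\J$-above (as noted in Section 3.1 via the Munn tree embedding argument), so the hypothesis of the corollary is automatically satisfied.

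Since all of the algorithmic heavy lifting has been done in establishing Corollary \ref{cor2} (and, behind it, Propositions \ref{prop3} and \ref{prop4}), this proof is essentially a packaging argument. The only mild point to be careful about is the assembly of a finite inverse monoid presentation for the direct product; once that is in hand, everything else is immediate.
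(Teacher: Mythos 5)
Your proposal is correct and follows essentially the same route as the paper, which likewise deduces the corollary from Corollary \ref{cor2} applied to the finite-$\J$-above direct product $\prod_{i=1}^m M_i$ with ${\rm Im}\,\epsilon^{\P}$ viewed as a finitely generated submonoid. The only difference is that you spell out the finite inverse monoid presentation of the direct product (component relations plus commuting relations), a hypothesis of Corollary \ref{cor2} that the paper leaves implicit.
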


Note that, unlike the situation for free groups, in this case decidability of the membership problem for the image of the eraser morphism is a particular case of a much more general decidability result, as in Corollary \ref{cor2}.

\if 0

\begin{proof}
Write $\tau = (\rho_{A} \cup R)^{\sharp}$ and $\tau_i = (\rho_{A_i} \cup R\oo{\theta}_i)^{\sharp}$ for $i = 1,\ldots,m$. Let $u_i \in \wt{A_i}^*$ for $i = 1,\ldots,m$. We must decide whether or not there exists some $u \in \wt{A}$ such that $(u\theta_i)\tau_iu_i$ for $i = 1,\ldots,m$.

Assume that the Sch\"utzenberger automaton of $u_i$ with respect to the presentation ${\rm Inv}\langle A_i \mid R\oo{\theta}_i \rangle$ is $\A^{(i)}(u_i) = (Q^{(i)},q_0^{(i)},t^{(i)},E^{(i)})$. Note that $Q^{(i)}$ corresponds to the $\cal{R}$-class of $u_i\tau_i$, and every element in this $\cal{R}$-class is necessarily a factor of $u_i\tau_i$. Since 
$\wt{A_i}^*/\tau_i$ is finite-$\J$-above, then $Q^{(i)}$ is finite and so $\A^{(i)}(u_i)$ is a finite $\wt{A_i}$-automaton.

We define now a finite $\wt{A}$-automaton
$$\B = (Q^{(1)} \times \ldots \times Q^{(m)}, (q_0^{(1)},\ldots,q_0^{(m)}), (t^{(1)},\ldots,t^{(m)}), E),$$
where $((p^{(1)},\ldots,p^{(m)}),a,(q^{(1)},\ldots,q^{(m)})) \in E$ if and only if, for every $i \in \{ 1,\ldots,m\}$,
$$\mbox{either $a \in \{ a_i,a_i\inv \}$ and $p^{(i)} = q^{(i)}$, or $a \in \wt{A_i}$ and $(p^{(i)},a,q^{(i)}) \in E_i$.}$$
This is equivalent to say that $p^{(i)} \longmapright{a\theta_i} q^{(i)}$ is a path (trivial or nontrivial) in $\A^{(i)}(u_i)$.
We show that
\beq
\label{decimage1}
L(\B) = \{ v \in \wt{A}^* \mid (v\theta_i)\tau_i \geq u_i\tau_i \mbox{ for }i = 1,\ldots,m\}.
\eeq

Indeed, assume that $v \in L(\B)$, and $v = b_1\ldots b_n$ with $b_j \in \wt{A}$. Then there exists a path in $\B$ of the form
\beq
\label{decimage3}
(q_0^{(1)},\ldots,q_0^{(m)}) \mapright{b_1} (q_1^{(1)},\ldots,q_1^{(m)}) \mapright{b_2} \ldots \mapright{b_n}
(q_n^{(1)},\ldots,q_n^{(m)}) = (t^{(1)},\ldots,t^{(m)}).
\eeq
It follows from the definition of $E$ that, for every $i$, we have a path 
\beq
\label{decimage2}
q_0^{(i)} \longmapright{b_1\theta_i} q_1^{(i)} \longmapright{b_2\theta_i} \ldots \longmapright{b_n\theta_i}
q_n^{(i)} = t^{(i)}
\eeq
in $\A^{(i)}(u_i)$. Hence $v\theta_i \in L(\A^{(i)}(u_i))$ and so $(v\theta_i)\tau_i \geq u_i\tau_i$ by Proposition \ref{order}. 

Conversely, assume that $v \in \wt{A}^*$ satisfies $(v\theta_i)\tau_i \geq u_i\tau_i$ for $i = 1,\ldots,m$. Then 
$v\theta_i \in L(\A^{(i)}(u_i))$ by Proposition \ref{order}. Using edges from $E^{(i)}$ if $b_j \in A_i$, and trivial paths otherwise, we can build paths (\ref{decimage2}) in in $\A^{(i)}(u_i)$ for $i = 1,\ldots,m$. But then we have a path of the form (\ref{decimage3}) in in $\B$ and so $v \in L(\B)$. Thus (\ref{decimage1}) holds.

Denote the automaton $\B$ we have just built by $\B(u_1,\ldots,u_m)$. We claim that, for every $i = 1,\ldots,m$, we can compute a finite subset $C_i$ of $\wt{A_i}^*$ such that $C_i\tau_i$ is the set of elements of $\wt{A_i}^*/\tau_i$ covering $u_i\tau_i$ in the natural partial order. 

Indeed, if $v\tau_i \geq u_i\tau_i$, then $v\tau_i$ is a factor of $u_i\tau_i$. Since $\wt{A_i}^*/\tau_i$ is finite-$\J$-above, then there exist only finitely many elements above $u_i\tau_i$. It follows that only finitely many elements cover $u_i\tau_i$. Moreover, if $v\tau_i > u_i\tau_i$, then $v\tau_i \geq c$ for some of these covering elements $c$.  

With respect to computability, we develop the following argument. Let $N$ be the number of states of the (finite) Sch\"utzenberger automaton $\A^{(i)}(u_i)$. We prove that the equivalence
\beq
\label{decimage4}
\mbox{$v\tau_i$ is a factor of $u_i\tau_i$ if and only if $v$ labels a path in }\A^{(i)}(u_i)
\eeq
holds for every $v \in \wt{A_i}^*$. 

Indeed, assume that $v\tau_i$ is a factor of $u_i\tau_i$. Then there exist $x,y \in \wt{A_i}$ such that $u_i\tau_i = (xvy)\tau_i$. By Proposition \ref{order}, we get $xvy \in L(\A^{(i)}(u_i))$, hence $v$ labels a path in $\A^{(i)}(u_i)$. Conversely, assume that $v$ labels a path in $\A^{(i)}(u_i)$. Since the inverse automaton $\A^{(i)}(u_i)$ is trim, there exist $x,y \in \wt{A_i}$ such that $xvy \in L(\A^{(i)}(u_i))$. Again by Proposition \ref{order}, we get $(xvy)\tau_i \geq u_i\tau_i$. But then $u_i\tau_i = (u_iu_i\inv xvy)\tau_i$ and so 
$v\tau_i$ is a factor of $u_i\tau_i$. Therefore (\ref{decimage4}) holds.

For every $n \geq 0$, let
$$P_n = \{ v \in \wt{A_i}^n \mid \mbox{$v$ labels a path in $\A^{(i)}(u_i)$}.\}.$$
In view of Proposition \ref{cons}, we can compute the elements of the finite set $P_n$ for every $n$. Moreover, Proposition \ref{cons} implies that $\wt{A_i}^*/\tau_i$ has decidable word problem, i.e. we can compare any element with all the other words in $P_0 \cup \ldots \cup P_n$ to check which words are $\tau_i$-equivalent. But in view of (\ref{decimage4}), and since $\wt{A_i}^*/\tau_i$ is finite-$\J$-above, we are bound to find some $k \geq N$ such that every word in $P_k$ is $\tau_i$-equivalent to some word in $P_0 \cup \ldots \cup P_{k-1}$. We prove that
\beq
\label{decimage5}
\mbox{every word in $P_{k+s}$ is $\tau_i$-equivalent to some word in $P_0 \cup \ldots \cup P_{k-1}$}
\eeq
holds for every $s \geq 0$, using induction on $s$. The case $s = 0$ follows from our assumption on $k$. Assume now that $s > 0$ and the claim holds for $s-1$. Let $w \in P_{k+s}$. Write $w = w'a$ with $a \in \wt{A_i}$. Then $w' \in P_{k+s-1}$, so by the induction hypothesis we have $w'\tau_i z$ for some $z \in P_0 \cup \ldots \cup P_{k-1}$. Hence $w = w'a \, \tau_i \, za$. But in view of (\ref{decimage4}) the set of words labelling paths in $\A^{(i)}(u_i)$ is a union of $\tau_i$-classes, thus $za \in P_k$. By the case $s = 0$, it follows that $za$ (and consequently $w$) is $\tau_i$-equivalent to some word in $P_0 \cup \ldots \cup P_{k-1}$. Therefore (\ref{decimage5}) holds.

Thus we can effectively compute a finite set of words representing all elements $x \in \wt{A_i}^*/\tau_i$ satisfying $x \geq u_i\tau_i$. Since $\wt{A_i}^*/\tau_i$ has decidable word problem, we can determine which of these elements cover $u_i\tau_i$, so we can effectively compute a finite subset $C_i$ of $\wt{A_i}^*$ such that $C_i\tau_i$ is the set of elements of $\wt{A_i}^*/\tau_i$ covering $u_i\tau_i$ in the natural partial order. 

For $i = 1,\ldots,m$, denote by $D_i$ the set of all $(v_1,\ldots,v_m) \in \wt{A_1}^* \times \ldots \times \wt{A_m}^*$ such that $v_i \in C_i$ and $v_j = u_j$ for every $j \neq i$. Let
$$L = L(\B(u_1,\ldots,u_m)) \setminus \bigcup_{i=1}^m \bigcup_{(v_1,\ldots,v_m) \in D_i} L(\B(v_1,\ldots,v_m)).$$
Since the sets $D_i$ are computable, as all the automata involved, it follows from Proposition \ref{rat} that $L$ is a rational languege and we can decide whether or not $L \neq \emptyset$. Thus to complete the proof of the theorem we only need to show that
\beq
\label{decimage6}
L = \{ v \in \wt{A}^* \mid (v\theta_i)\tau_i = u_i\tau_i \mbox{ for }i = 1,\ldots,m\},
\eeq
since $(u_1\tau_1,\ldots,u_m\tau_m) \in \im\epsilon_P$ if and only if $L \neq 0$.

Assume that $v \in L$. By (\ref{decimage1}), we have $(v\theta_i)\tau_i \geq u_i\tau_i$ for $i = 1,\ldots,m$. Suppose that $(v\theta_i)\tau_i > u_i\tau_i$ for some $i$. Then there exists some $c \in C_i$ such that $(v\theta_i)\tau_i \geq c\tau_i$. Let $(v_1,\ldots,v_m) \in D_i$ be defined by $v_i = c$. Since $(v\theta_i)\tau_i \geq v_i\tau_i$ for $i = 1,\ldots,m$, (\ref{decimage1}) yields $v \in L(\B(v_1,\ldots,v_m)$, contradicting $v \in L$. Thus $L \subseteq \{ v \in \wt{A}^* \mid (v\theta_i)\tau_i = u_i\tau_i \mbox{ for }i = 1,\ldots,m\}$.

The opposite inclusion is analogous.
\end{proof}

\begin{corollary}
\label{decif}
Let $A = \{ a_1,\ldots,a_m\}$ and let $\epsilon^{\P}:FIM_A \to \prod_{i=1}^m FIM_{A_i}$ be the homomorphism induced by the canonical presentation ${\rm Inv}\langle A \mid \emptyset \rangle$.  Then the membership problem is decidable for ${\rm Im}\,\epsilon^{\P} \leq \prod_{i=1}^m FIM_{A_i}$.
\end{corollary}

\begin{proof}
Since $FIM_A$ is finite-$\J$-above, 
we may apply Theorem \ref{decimage}.
\end{proof}

\fi

The following example shows that $\wt{A}^*/(\rho_{A} \cup R)^{\sharp}$ finite-$\J$-above does not imply $\wt{A_i}^*/(\rho_{A_i} \cup R\oo{\theta}_i)^{\sharp}$ finite-$\J$-above.

\begin{example}
\label{fja}
The presentation ${\rm Inv}\langle a,b \mid aa\inv = b\rangle$ defines a finite-$\J$-above inverse monoid, but the presentation ${\rm Inv}\langle a \mid aa\inv = 1\rangle$ does not.
\end{example}

Indeed, the first presentation is clearly equivalent to ${\rm Inv}\langle a \mid \emptyset \rangle$, hence it defines the monogenic free inverse monoid, which is finite-$\J$-above. However, the second presentation defines the bicyclic monoid, where each element is a factor of any other element. Since the bicyclic monoid is infinite, it is not finite-$\J$-above.

\medskip

The next example shows that the condition characterizing ${\rm Im}\,\epsilon^{\P}$ in the free group case is not sufficient for free inverse monoids.

\begin{example}
\label{fgn}
Let $A = \{ a_1,a_2,a_3 \}$, $x_1 = a_2a_3$, $x_2 = a_3a_1$ and $x_3 = a_1a_2$. 
Then $x_i\theta_j = x_j\theta_i$ for all $i,j \in \{ 1,2,3\}$ but $(x_1\rho_{A_1}, x_2\rho_{A_2},x_3\rho_{A_3}) \notin {\rm Im}\,\epsilon^{\P}$.
\end{example}

Indeed, suppose that $(x_1\rho_{A_1}, x_2\rho_{A_2},x_3\rho_{A_3}) = (w\rho_A)\epsilon^{\P}$ for some $w \in \wt{A}^*$. Clearly, $w \neq 1$, so let $b$ denote the first letter of $w$.
\bi
\item
if $b\in \{a_1,a_1\inv \}$, we contradict $(w\theta_2)\rho_{A_2} = x_2\rho_{A_2}$;
\item
if $b\in \{a_2,a_2\inv \}$, we contradict $(w\theta_3)\rho_{A_3} = x_3\rho_{A_3}$;
\item
if $b\in \{a_3,a_3\inv \}$, we contradict $(w\theta_1)\rho_{A_1} = x_1\rho_{A_1}$.
\ei
Therefore $(x_1\rho_{A_1}, x_2\rho_{A_2},x_3\rho_{A_3}) \notin {\rm Im}\,\epsilon^{\P}$ as claimed.

In what follows we consider an inverse monoid $M$ having finite $\mathcal{R}$-classes and we put $\tau=(\rho_{A} \cup R)^{\sharp}$ and  $\tau_i=(\rho_{A_i} \cup R\oo{\theta}_i)^{\sharp}$ for $i=1,\ldots, n$. By Proposition~\ref{cons} each Sch\"utzenberger automaton $\A(u)$ is finite and constructible. Therefore, from a combinatorial point of view, it is natural to explore what is the relationship between the Sch\"utzenberger automata of $M$ and the ones in $M_{i}$. To describe this we we need to define the following natural operation on automata. 

\begin{definition}[$a_{i}$-contracted automaton]For an inverse automaton $\A$, the $a_{i}$-contracted automaton $\chi_{a_i}(\A)$ in the inverse automaton obtained in the following way. Identify all the states $p,q$ such that $p\mapright{a_{i}} q$ is an edge in $\A$, and erase all the loops labeled by $a_{i}$ (and consequently also $a_{i}^{-1}$). This operation generates a new involutive automaton $\B$ on the alphabet $\wt{A_{i}}$ that is not in general deterministic. Then, the $a_{i}$-contracted automaton is $\chi_{a_i}(\A)=\fold(\B)$. Let $\pi: \A\to \chi_{a_i}(\A)$ and $\pi': \A\to \B$ be the natural homomorphisms (in the category of inverse automata).\end{definition}

A word $g \in \wt{A}^*$ is said to be a {\em Dyck word} on $A$ if $g$ is reducible to the identity in $FG_{A}$. We denote by $D_A$ the language of all Dyck words on $A$.
 We have the following lifting lemma.
\begin{lemma}\label{lem: lifting}
With the above notation. Let $\pi(p) \mapright{u} \pi(q)$ be a path in $\chi_{a_i}(\A)$ and write $u = u_1\ldots u_k$ with $u_1,\ldots,u_k \in \wt{A_{i}}$. Then there exists a path $p\mapright{\overline{u}}q$ in $\A$ and a factorization

$$\overline{u}=g_0u_{1}g_{1}\ldots u_{k}g_k$$

in $\wt{A}^*$ with $g_j\theta_i \in D_{A_i}$ for $j = 0,\ldots,k$. In particular, $\oo{u}\p_i\le u\tau_i$ and so the inclusion 

$$L[\chi_{a_i}(\A)]\subseteq \{u\in \wt{A_{i}}^{*}: \oo{u}\p_{i}\le u  \tau_i\mbox{ for some }\oo{u}\in L[\A]\}$$

holds.

\end{lemma}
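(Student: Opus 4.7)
The plan is to argue by induction on $k$, with the core of the argument being a fiber-analysis lemma for the projection $\pi$: whenever $\pi(p)=\pi(q)$, there is a path $p\mapright{g}q$ in $\A$ whose label satisfies $g\theta_i\in D_{A_i}$.

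I would prove this fiber claim by factoring $\pi=\pi_2\circ\pi'$, where $\pi'\colon\A\to\B$ is the $a_i$-collapse-and-erase step and $\pi_2\colon\B\to\fold(\B)=\chi_{a_i}(\A)$ is the Stallings folding on the alphabet $\wt{A_i}$. For $\pi'$, two vertices of $\A$ get identified exactly when they lie in a common $a_i$-connected component, so a lifting path between them can be taken with label in $\{a_i,a_i\inv\}^*$, which is sent by $\theta_i$ to the empty word and is therefore in $D_{A_i}$. For $\pi_2$, the standard fact about Stallings folding of an involutive $\wt{A_i}$-automaton says that two vertices of $\B$ are identified by $\pi_2$ iff they are joined by a path with label in $D_{A_i}$. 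Since every edge of $\B$ lifts to an edge of $\A$ (it is a non-$a_i$-edge of $\A$ with endpoints replaced by their $\pi'$-classes), we can lift such a Dyck path in $\B$ back to $\A$ by interleaving it with $a_i^{\pm 1}$-paths inside each $a_i$-component; the interleaved $a_i$'s are killed by $\theta_i$, so the lifted label still lies in $D_{A_i}$.

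The induction on $k$ is then clean. The base $k=0$ is exactly the fiber claim. For the step, split the path as $\pi(p)\mapright{u_1}v_1\mapright{u_2\ldots u_k}\pi(q)$; the first edge, coming from $\fold(\B)$, is the $\pi$-image of an edge $p^*\mapright{u_1}v_1^*$ of $\A$ with $\pi(p^*)=\pi(p)$ and $\pi(v_1^*)=v_1$. The fiber claim yields a path $p\mapright{g_0}p^*$ with $g_0\theta_i\in D_{A_i}$, and the inductive hypothesis applied starting at $v_1^*$ produces a path $v_1^*\mapright{g_1u_2\ldots u_kg_k}q$ in $\A$ with all $g_j\theta_i\in D_{A_i}$. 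Concatenating yields the stated factorization.

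Finally, $\oo{u}\theta_i=(g_0\theta_i)u_1(g_1\theta_i)\ldots u_k(g_k\theta_i)$; each $g_j\theta_i$ is a Dyck word and so represents an idempotent below $1$ in $FIM_{A_i}$, hence also below $1$ in $M_i=\wt{A_i}^*/\tau_i$. Multiplicative compatibility of the natural partial order then gives $\oo{u}\p_i\leq u\tau_i$. The language inclusion follows by specializing to $p$ and $q$ being the initial and terminal vertices of $\A$. I expect the main obstacle to be writing the fiber lemma for $\pi_2$ carefully — it is standard in Stallings-folding theory but deserves either a short inductive proof on the number of elementary foldings or a precise reference in order to keep the argument self-contained.
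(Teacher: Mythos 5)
Your proposal is correct and takes essentially the same route as the paper's proof: both factor $\pi$ through the intermediate automaton $\B$, lift through the foldings by inserting Dyck words over $\wt{A_i}$ and through the $a_i$-contraction by inserting $a_i^{\pm 1}$-segments that $\theta_i$ kills, and conclude via the fact that the Dyck factors map to idempotents of $M_i$, which are below $1$ in the natural partial order. The only difference is organizational: you package the lift as an induction on $k$ via a fiber lemma, whereas the paper lifts the entire path in two global stages, from $\fold(\B)$ to $\B$ and then to $\A$.
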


\begin{proof}

If an involutive automaton $\C'$ is obtained from an involutive automaton $\C$ by folding two edges $\bullet \mapleft{a} \bullet \mapright{a} \bullet$, then every path in $\C'$ can be lifted to a path in $\C$ by inserting factors $a\inv a$. Since $\chi_{a_i}(\A)$ is obtained by starting from $\B$ and successively folding edges, the lifting still holds but we must insert Dyck words (these are the words obtained by successively inserting factors of the form $a\inv a$ $(a \in \wt{A})$). Thus we can lift the path $\pi(p) \mapright{u} \pi(q)$ to some path $\pi'(p) \mapright{u'} \pi'(q)$ in $\B$, where 

$$u' =g'_0u_{1}g'_{1}\ldots u_{k}g'_k$$

and $g'_j \in D_{A_i}$ for $j = 0,\ldots,k$. Now we lift this path to a path $p\mapright{\overline{u}}q$ in $\A$ by inserting factors of the form $a_i^r$ $(r \in \mathbb{Z})$. Such insertions change the Dyck words $g'_j$ into the required $g_j$.

Let us prove the last claim of the lemma. Let $\alpha, \beta$ the initial and final state of $\A$, respectively. Let $u\in L(\chi_{a_i}(\A))$, by the first statement of the lemma we may lift a path $\pi(\alpha)\mapright{u}\pi(\beta)$ in $\chi_{a_i}(\A)$ to a path $\alpha\mapright{\oo{u}} \beta$ in $\A$ where

$$\overline{u}=g_0u_{1}g_{1}\ldots u_{k}g_k$$

and $g_j\theta_i \in D_{A_i}$ for $j = 0,\ldots,k$.

Now, by applying the eraser morphism $\epsilon_i^{\P}$ to $\oo{u}$ we get $\oo{u}\p_{i}=(\oo{u}\tau)\epsilon_i^{\P}=(\oo{u}\theta_{i})\tau_{i}$. Since each $g_{j}\theta_{i} \in D_{A_i}$, we get $(g_{j}\theta_{i})\tau_{i}\in E(M_{i})$, from which we deduce 
$$\oo{u}\p_{i}=(\oo{u}\tau)\epsilon_i^{\P}=(\oo{u}\theta_{i})\tau_{i}\le u\tau_{i}$$
and this completes the proof of the lemma. 
\end{proof}
We have the following lemma.
\begin{lemma} \label{lem: closure steph}
Let $M=\wt{A}^{*}/\tau$ be an inverse monoid with presentation ${\rm Inv}\langle A \mid R\rangle$ and let $\A$ be a connected inverse automaton then 
$$
L(\cl_{R}(\A))=\{w\in \wt{A}^{*}: w\tau\ge u\tau,\mbox{ for some }u\in L(\A) \}
$$
Moreover, if $Cl_{R}(\A)\simeq \A(u)$ for some Sch\"utzenberger automaton $\A(u)$, then there is a word $v\in L(\A)$ such that $v\tau=u\tau$.
\end{lemma}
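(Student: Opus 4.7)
The plan is to prove both inclusions of the language equality separately; the ``moreover'' statement is then a short corollary. For the inclusion $\supseteq$, I would fix $u\in L(\A)$ and use the path $q_0\mapright{u}t$ in $\A$ to build a morphism of inverse automata $\mt(u)\to\A$ sending initial to $q_0$ and terminal to $t$ (well-defined and unique because $\A$ is deterministic). The Stephen directed system is functorial for such morphisms: any elementary expansion on $\mt(u)$ can be mirrored on its image in $\A$, and any folding forced on the source is either already realized at the target or must be performed there. Passing to colimits yields a morphism $\A(u)=\cl_{R}(\mt(u))\to\cl_{R}(\A)$ preserving initial and terminal, hence $L(\A(u))\subseteq L(\cl_{R}(\A))$. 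Combining with Proposition~\ref{order}, which gives $L(\A(u))=\{w:w\tau\ge u\tau\}$, and taking the union over $u\in L(\A)$, yields this inclusion.

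For the inclusion $\subseteq$, I would use Stephen's directed system $\A=\A_1,\A_2,\ldots$ with $L(\cl_{R}(\A))=\bigcup_n L(\A_n)$, and induct on $n$, treating one elementary expansion or one folding per step (the ``simultaneous'' version of the construction reduces to this by confluence). The base case is immediate. For a folding step identifying $p,q$ via $p\mapleft{a}r\mapright{a}q$, a path in $\A_{n+1}$ labeled $w$ lifts to a path in $\A_n$ labeled $w'$ by reinserting idempotent factors $a^{-1}a$ or $aa^{-1}$ at the crossover points; since $\alpha e\beta\le\alpha\beta$ holds in any inverse monoid for an idempotent $e$, one obtains $w\tau\ge w'\tau$, and the inductive hypothesis applied to $w'$ closes this step.

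The expansion step will be the main obstacle. Here $\A_{n+1}$ is built from $\A_n$ by adding a new path $p\mapright{r}q$ on fresh intermediate vertices, while $\A_n$ already contains $p\mapright{s}q$ and $(r,s)\in R$ or $(s,r)\in R$, so that $r\tau=s\tau$. I would decompose a successful path in $\A_{n+1}$ into old segments in $\A_n$ alternating with excursions into the new $r$-tree; the fresh vertices only connect along this tree, so each excursion enters and leaves at $p$ or $q$. Full traversals carry label $r$ or $r^{-1}$ and can be replaced by the existing $s$- or $s^{-1}$-path, preserving the $\tau$-class. The delicate sub-case is an idempotent excursion based at $p$ (resp.\ $q$): a Munn-tree analysis shows that the label $x$ satisfies $\mt(x)\subseteq\mt(rr^{-1})$ (resp.\ $\mt(r^{-1}r)$) as pointed subtrees of the Cayley graph, whence $x\ge rr^{-1}$ (resp.\ $x\ge r^{-1}r$) in $FIM_A$ and therefore $x\tau\ge ss^{-1}\tau$ (resp.\ $\ge s^{-1}s\tau$) in $M$ using $r\tau=s\tau$. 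Replacing each such excursion by the round-trip $ss^{-1}$ along the $s$-path (or $s^{-1}s$ from $q$), compatibility of the partial order with multiplication yields a lifted $w'\in L(\A_n)$ with $w\tau\ge w'\tau$, and the inductive hypothesis supplies the required $u\in L(\A)$.

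The moreover part is immediate from the equality just proved. If $\cl_{R}(\A)\cong\A(u)$, then $u\in L(\A(u))=L(\cl_{R}(\A))$ forces the existence of $v\in L(\A)$ with $u\tau\ge v\tau$; conversely, $v\in L(\A)\subseteq L(\cl_{R}(\A))=L(\A(u))=\{w:w\tau\ge u\tau\}$ gives $v\tau\ge u\tau$, and combining the two inequalities yields $v\tau=u\tau$.
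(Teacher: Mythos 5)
Your proposal is correct in substance, but it takes a genuinely different route from the paper: the paper disposes of the displayed language equality in one line by citing Lemma 3.4 of \cite{Steph98}, and only the ``moreover'' clause is actually argued there, by essentially the same two inequalities you use (namely $u\in L(\A(u))=L(\cl_R(\A))$ gives $u\tau\ge v\tau$ for some $v\in L(\A)$, and $v\in L(\A)\subseteq L(\cl_R(\A))=L(\A(u))$ gives $v\tau\ge u\tau$). What you propose instead is a self-contained re-derivation of Stephen's lemma: the inclusion $\supseteq$ via the morphism $\mt(u)\to\A$ and functoriality of the closure, and the inclusion $\subseteq$ by induction along a single-step chain, lifting through foldings by inserting idempotent factors and through expansions by a Munn-tree/order argument. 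This buys independence from the reference at the cost of length; note, however, that the functoriality of $\cl_R$ under automaton morphisms is itself a result from Stephen's papers, so your $\supseteq$ direction still leans on that source unless you prove it (alternatively, use directly that $\cl_R(\A)$ is closed, hence its language is upward closed over each word it accepts). Two points in your expansion step need tightening: an excursion through the new vertices from $p$ to $q$ need not carry label exactly $r$ or $r^{-1}$ (it can wander back and forth, and $p$ may coincide with $q$, in which case the new edges form a cycle rather than a tree); the remedy is to split the path at every visit to $p$ or $q$ and apply your Munn-tree containment argument uniformly to each resulting segment, which yields $x\tau\ge r\tau=s\tau$, $x\tau\ge (rr^{-1})\tau$, etc., and that is all the replacement requires. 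Finally, the reduction of the ``simultaneous'' construction to one elementary step at a time is a matter of cofinality of a fair single-step chain in Stephen's directed system (so that the union of the languages along the chain is $L(\cl_R(\A))$), rather than confluence per se.
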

\begin{proof}
The first statement is \cite[Lemma 3.4]{Steph98}. Since by Proposition~\ref{prop: isomorphic} we have $L[\A(u)]=L[Cl_{R}(\A)]$ we may deduce the following equality
$$\{ w\tau \mid w\tau \geq u\tau\} = \{ w\tau \mid w\tau \geq v\tau \mbox{ for some }v \in L(\A)\},$$ hence $u\tau \geq v\tau$ for some $v \in L(\A)$, i.e., $v\tau=u\tau$. 
\end{proof}
We have the following proposition.
\begin{proposition}\label{prop: sch as closure}
For any $w\in \wt{A}^{*}$, let $\A(w\theta_i)$ and $\A(w)$ be the Sch\"utzenberger automaton of the word $w\theta_i$, $w$ with respect to the presentation ${\rm Inv}\langle A_{i} \mid R\oo{\theta}_i\rangle$, ${\rm Inv}\langle A \mid R\rangle$, respectively. Then, 
$$
\A(w\theta_i)\simeq \cl_{R\oo{\theta}_i}(\chi_{a_i}(\A(w)))
$$
\end{proposition}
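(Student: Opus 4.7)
The plan is to prove the isomorphism by showing the two inverse automata recognize the same language, since inverse automata are minimal and thus determined up to isomorphism by their language (as recalled in Section~\ref{sec: automata}). Note that $\chi_{a_i}(\A(w))$ is trim, connected, and inverse (with initial state $\pi((ww\inv)\tau)$ and terminal state $\pi(w\tau)$), so the closure $\cl_{R\oo{\theta}_i}(\chi_{a_i}(\A(w)))$ is well-defined as an inverse automaton over $\wt{A_i}$, and we may apply Lemma \ref{lem: closure steph} and Proposition \ref{prop: isomorphic} to describe both languages:
\begin{equation*}
L(\A(w\theta_i)) = \{ v \in \wt{A_i}^* \mid v\tau_i \geq (w\theta_i)\tau_i \},
\end{equation*}
\begin{equation*}
L(\cl_{R\oo{\theta}_i}(\chi_{a_i}(\A(w)))) = \{ v \in \wt{A_i}^* \mid v\tau_i \geq u\tau_i \text{ for some } u \in L(\chi_{a_i}(\A(w))) \}.
\end{equation*}

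For the inclusion $L(\A(w\theta_i)) \subseteq L(\cl_{R\oo{\theta}_i}(\chi_{a_i}(\A(w))))$, I would first observe that $w\theta_i$ itself lies in $L(\chi_{a_i}(\A(w)))$: indeed, any successful path in $\A(w)$ labeled by $w$ projects under $\pi$ to a successful path in $\chi_{a_i}(\A(w))$ whose label is obtained from $w$ by erasing the letters $a_i^{\pm 1}$, that is, $w\theta_i$. Taking $u = w\theta_i$ in the characterization above immediately yields the desired inclusion.

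The reverse inclusion is where the lifting lemma plays its role. Let $v \in L(\cl_{R\oo{\theta}_i}(\chi_{a_i}(\A(w))))$, so that $v\tau_i \geq u\tau_i$ for some $u \in L(\chi_{a_i}(\A(w)))$. By Lemma \ref{lem: lifting} applied to $\A = \A(w)$ with endpoints the initial and terminal state, there exists $\oo{u} \in L(\A(w))$ with $\oo{u}\p_i \leq u\tau_i$, that is, $(\oo{u}\theta_i)\tau_i \leq u\tau_i$. Proposition \ref{prop: isomorphic} (applied to the presentation ${\rm Inv}\langle A \mid R \rangle$) gives $\oo{u}\tau \geq w\tau$, and since every homomorphism of inverse monoids preserves the natural partial order, applying $\epsilon_i^{\P}$ yields $(\oo{u}\theta_i)\tau_i \geq (w\theta_i)\tau_i$. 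Chaining these inequalities,
\begin{equation*}
v\tau_i \;\geq\; u\tau_i \;\geq\; (\oo{u}\theta_i)\tau_i \;\geq\; (w\theta_i)\tau_i,
\end{equation*}
so $v \in L(\A(w\theta_i))$ by Proposition \ref{prop: isomorphic}. This completes the equality of languages, and minimality of inverse automata delivers the claimed isomorphism.

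The main subtlety I anticipate is the careful bookkeeping of the two congruences $\tau$ and $\tau_i$, and the fact that $\epsilon_i^{\P}$ transports the natural partial order from $M$ to $M_i$; the combinatorial lifting itself is already packaged in Lemma \ref{lem: lifting}, so no extra graph-theoretic argument is needed beyond verifying that $\chi_{a_i}(\A(w))$ is a genuine connected inverse automaton so that Lemma \ref{lem: closure steph} applies.
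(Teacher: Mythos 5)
Your proof is correct and follows essentially the same route as the paper's: both arguments rest on the same three ingredients — the lifting lemma (Lemma \ref{lem: lifting}), the description of the language of a closure (Lemma \ref{lem: closure steph}), the order-preservation of the homomorphism $\epsilon_i^{\P}$ — together with the observation that $w\theta_i \in L(\chi_{a_i}(\A(w)))$, and conclude via minimality of inverse automata (Proposition \ref{prop: isomorphic}). The only cosmetic difference is that you split the language equality into two explicit inclusions, whereas the paper chains the inclusions in one direction and then invokes Lemma \ref{lem: closure steph} once to force equality.
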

\begin{proof}
By Proposition~\ref{prop: isomorphic} we have $L(\A(w))=\{x\in \wt{A}^*: x\tau\ge w\tau\}$. We claim that the following fact:
$$
\mbox{if }x\tau\ge w\tau,\mbox{ then }(x\theta_i)\tau_i\ge (w\theta_i)\tau_i
$$
holds. This follows from the fact that $\epsilon_i^{\P}: \wt{A}^*/(\rho_A \cup R)^{\sharp} \to \wt{A_i}^*/(\rho_{A_i} \cup R\oo{\theta}_i)^{\sharp}$ is a well defined homomorphism, hence if $x\tau\geq w\tau$ we clearly get $\epsilon_i^{\P}(x\tau)\geq \epsilon_i^{\P}(w\tau)$, i.e., $(x\theta_i)\tau_i\ge (w\theta_i)\tau_i$. From Lemma~\ref{lem: lifting} and the previous fact we deduce the following inclusions:
\begin{eqnarray*}
L(\chi_{a_i}(\A(w))) &\subseteq &\{u\in \wt{A_{i}}^{*}: u\tau_{i}\ge (x\theta_i)\tau_i, \mbox{ for some }x\in L[\A(w)] \}\subseteq \\
&\subseteq&  \{u\in \wt{A_{i}}^{*}: u\tau_{i}\ge (w\theta_i)\tau_i\}
\end{eqnarray*}
Now, since $w\theta_i\in L(\chi_{a_i}(\A(w)))$, by Lemma~\ref{lem: closure steph} we immediately deduce that
$$
L(\cl_{R\oo{\theta}_i}(\chi_{a_i}(\A(w)))
)=\{u\in \wt{A_{i}}^{*}: u\tau_{i}\ge (w\theta_{i})\tau_i \}=L(\A(w\theta_i))
$$
i.e., $\A(w\theta_i)\simeq\cl_{R\oo{\theta}_i}(\chi_{a_i}(\A(w)))$ by Proposition~\ref{prop: isomorphic}.
\end{proof}
Our aim is to characterize those $n$-tuples $(u_1\tau_1, u_2\tau_2, \ldots, u_n\tau_n)\in \prod_{i=1}^m M_{i}$ that are in the image ${\rm Im}\,\epsilon^{\P} $. We provide a combinatorial characterization depending on the Sch\"utzenberger automata $\A(u_i)$ with respect to the presentations {\rm Inv}$\la A_i \mid R\oo{\theta_i} \ra$, for $i=1,\ldots, n$. We first consider the inverse automaton $\wh{\A}(u_i)=(Q_{i}, \alpha_i, \beta_i, E_i)$ obtained from $\A(u_i)$ by adding all the loops $q\mapright{a_{i}}q$, $q\mapright{a_{i}^-1}q$ for each state $q$ of $\A(u_i)$. Now consider the product automaton:
$$
\mathcal{P}(u_1,\ldots, u_n)=\wh{\A}(u_1)\times \wh{\A}(u_2)\times \ldots \times \wh{\A}(u_n)
$$
i.e., the inverse automaton obtained considering the set of states $Q_1\times \ldots \times Q_n$ and transitions $(q_1, \ldots, q_n)\mapright{a} (p_1, \ldots, p_n)$ whenever $q_i\mapright{a} p_i$ is a transition in $\wh{\A}(u_i)$ for all $i=1,\ldots, n$, and the initial and final state $(\alpha_1,\ldots, \alpha_n)$, $(\beta_1,\ldots, \beta_n)$, respectively. It is not difficult to see that the language $L(\mathcal{P}(u_1,\ldots, u_n))$ is formed by words $w\in \wt{A}^{*}$ such that $w\theta_i\in L({\A}(u_i))$ for all $i=1,\ldots, n$. Thus, it is formed by all the words $w$ such that $(w\theta_{i})\tau_i\ge u_{i}\tau_{i}$ for all $i=1,\ldots, n$. Conversely, if $w\in \wt{A}^{*}$ is a word such that $(w\theta_i)\tau_i \ge u_i\tau_i$ for all $i=1,\ldots, n$, then $w\in L(\mathcal{P}(u_1,\ldots, u_n))$. Thus the following equality:
$$
L(\mathcal{P}(u_1,\ldots, u_n))=\{w\in \wt{A}^{*}: (w\theta_i)\tau_i \ge u_i\tau_i\mbox{ for all }i=1,\ldots, n\}
$$ 
holds. We have the following combinatorial characterization of the elements in ${\rm Im}\,\epsilon^{\P}$.
\begin{proposition}\label{prop: combinatorial characterization}
With the above notation, an $n$-tuple $(u_{1}\tau_{1}, \ldots, u_{n}\tau_{n})\in {\rm Im}\,\epsilon^{\P} $ if and only if 
$$
\cl_{R\oo{\theta_i}}\left(\chi_{a_i}(\mathcal{P}(u_1,\ldots, u_n))\right)=\A(u_i)\mbox{ for all }i=1,\ldots, n.
$$
\end{proposition}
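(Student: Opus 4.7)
My plan is to reduce the claimed isomorphism to an equality of languages and then prove two inclusions. Since inverse automata are minimal (Section~\ref{sec: automata}), it suffices to show $L(\cl_{R\oo{\theta_i}}(\chi_{a_i}(\mathcal{P}))) = L(\A(u_i)) = \{v \in \wt{A_i}^* : v\tau_i \geq u_i\tau_i\}$ for every $i$, where I abbreviate $\mathcal{P} = \mathcal{P}(u_1,\ldots,u_n)$.

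The inclusion $\subseteq$ holds unconditionally, without any assumption on $(u_1\tau_1,\ldots,u_n\tau_n)$. Given $v$ in the left-hand language, Lemma~\ref{lem: closure steph} provides $u \in L(\chi_{a_i}(\mathcal{P}))$ with $v\tau_i \geq u\tau_i$, and the last assertion of Lemma~\ref{lem: lifting} produces $\oo{u} \in L(\mathcal{P})$ with $(\oo{u}\theta_i)\tau_i \leq u\tau_i$. The explicit description of $L(\mathcal{P})$ established just before the proposition forces $(\oo{u}\theta_i)\tau_i \geq u_i\tau_i$, whence $v\tau_i \geq u_i\tau_i$, as required.

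For the forward implication, I pick $w \in \wt{A}^*$ with $(w\theta_j)\tau_j = u_j\tau_j$ for every $j$. Then $w \in L(\mathcal{P})$, and the canonical projection $\pi:\mathcal{P}\to\chi_{a_i}(\mathcal{P})$ carries its successful path to one labelled by $w\theta_i$ (each $a_i$-edge collapses to a loop that is subsequently erased, and folding preserves the language), so $w\theta_i \in L(\chi_{a_i}(\mathcal{P}))$. Feeding this choice of $u := w\theta_i$ into Lemma~\ref{lem: closure steph} and using $(w\theta_i)\tau_i = u_i\tau_i$ yields $L(\A(u_i)) \subseteq L(\cl_{R\oo{\theta_i}}(\chi_{a_i}(\mathcal{P})))$, closing the forward argument.

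For the converse, assume the isomorphism holds for every $i$. The moreover part of Lemma~\ref{lem: closure steph} yields, for each $i$, a word $v_i \in L(\chi_{a_i}(\mathcal{P}))$ with $v_i\tau_i = u_i\tau_i$, which Lemma~\ref{lem: lifting} lifts to $\oo{v_i} \in L(\mathcal{P})$ satisfying $(\oo{v_i}\theta_i)\tau_i \leq u_i\tau_i$; membership in $L(\mathcal{P})$ upgrades this to equality in the $i$-th component, but leaves only $(\oo{v_i}\theta_j)\tau_j \geq u_j\tau_j$ for $j \neq i$. To synthesise these partial preimages I set
$$w = \oo{v_1}\,\oo{v_1}^{-1}\,\oo{v_2}\,\oo{v_2}^{-1}\cdots \oo{v_{n-1}}\,\oo{v_{n-1}}^{-1}\,\oo{v_n},$$
and verify $(w\theta_j)\tau_j = u_j\tau_j$ for every $j$ using the semilattice calculus in $E(M_j)$: writing $y_k = (\oo{v_k}\theta_j)\tau_j$, the commuting idempotents $y_k y_k^{-1}$ all dominate $u_j u_j^{-1}\tau_j$ with equality when $k=j$, so the partial product $e_j = \prod_{k<n} y_k y_k^{-1}$ is an idempotent $\geq u_j u_j^{-1}\tau_j$ (equal when $j<n$). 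The natural partial order identity $u_j\tau_j = u_j u_j^{-1}\tau_j \cdot y_n$ (valid since $y_n \geq u_j\tau_j$) combined with the absorption $e_j \cdot u_j u_j^{-1}\tau_j = u_j u_j^{-1}\tau_j$, used in the case $j=n$, then produces $(w\theta_j)\tau_j = u_j\tau_j$. The hard part is precisely this converse assembly: the individual lifts witness equality only in their own component, and manufacturing a common preimage under $\epsilon^{\P}$ demands the interleaved-idempotent trick together with the characterisation $a \leq b \iff a = aa^{-1}b$ of the natural partial order; the forward direction is a formal unwinding of Lemmas~\ref{lem: closure steph} and~\ref{lem: lifting}.
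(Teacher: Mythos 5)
Your proof is correct and takes essentially the same route as the paper's: the forward direction combines Lemma~\ref{lem: lifting} with Lemma~\ref{lem: closure steph} exactly as in the paper, and your converse uses the same interleaved-idempotent word (the paper takes $(\oo{z}_{1}\oo{z}_{1}^{-1})\cdots(\oo{z}_{n}\oo{z}_{n}^{-1})\oo{z}_1$ while yours ends in $\oo{v_n}$, an immaterial variation) with an equivalent semilattice verification of the projections.
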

\begin{proof}
Suppose that $(u_{1}\tau_{1}, \ldots, u_{n}\tau_{n})\in {\rm Im}\,\epsilon^{\P}$ and let $w\in\wt{A}^*$ such that $(w\tau) \epsilon^{\P}=(u_{1}\tau_{1}, \ldots, u_{n}\tau_{n})$. By Lemma \ref{lem: lifting} we have 
\begin{align*}
L(\chi_{a_i}(\mathcal{P}(u_1,\ldots, u_n)))&\subseteq \{u\in \wt{A_{i}}^{*}: s\p_{i}\le u  \tau_i\mbox{ for some }s\in L(\A(u_i)\}\subseteq \\
&\subseteq \{s\in \wt{A_i}^{*}: s\tau_{i}\ge u_{i}\tau_{i}\}
\end{align*}
Since $u_{i}\tau_{i}=w \epsilon_i^{\P}=(w\theta_i)\tau_{i}$ we have $w\in L(\mathcal{P}(u_1,\ldots, u_n))$. In particular $w\theta_i\in L(\chi_{a_i}(\mathcal{P}(u_1,\ldots, u_n)))$ with $(w\theta_i)\tau_{i}=u_{i}\tau_{i}$. Therefore, by Lemma~\ref{lem: closure steph} we get:
$$
L\left(\cl_{R\oo{\theta_i}}\left(\chi_{a_i}(\mathcal{P}(u_1,\ldots, u_n))\right)\right)= \{s\in \wt{A_i}^{*}: s\tau_{i}\ge u_{i}\tau_{i}\}=L(\A(u_i))
$$
i.e., $\cl_{R\oo{\theta_i}}\left(\chi_{a_i}(\mathcal{P}(u_1,\ldots, u_n))\right)=\A(u_i)$.
\\
On the other hand, suppose that $\cl_{R\oo{\theta_i}}\left(\chi_{a_i}(\mathcal{P}(u_1,\ldots, u_n))\right)=\A(u_i)$. Let $\alpha_i, \beta_i$ be the initial and final states of $\chi_{a_i}(\mathcal{P}(u_1,\ldots, u_n))$, respectively. By Lemma~\ref{lem: closure steph} there is a word $z_{i}\in L(\chi_{a_i}(\mathcal{P}(u_1,\ldots, u_n)))$ such that $z_{i}\tau_{i}=u_{i}\tau_{i}$. Now by Lemma \ref{lem: lifting} we may lift the path $\alpha_i\mapright{z_{i}} \beta_i$ of $\chi_{a_i}(\mathcal{P}(u_1,\ldots, u_n))$ to a path $\alpha\mapright{\oo{z}_{i}} \beta$ in $\mathcal{P}(u_1,\ldots, u_n)$ (where $\alpha, \beta$ are the initial and final states of $\mathcal{P}(u_1,\ldots, u_n)$, respectively), for some $\oo{z}_{i}\in\wt{A}^*$ with $(\oo{z}_{i}\theta_i)\tau_{i}\le z_{i}\tau_{i}$. Consider the following word:
$$
w=(\oo{z}_{1}\oo{z}_{1}^{-1})(\oo{z}_{2}\oo{z}_{2}^{-1})\ldots (\oo{z}_{n}\oo{z}_{n}^{-1})\oo{z}_1
$$
note that $w\in L[\mathcal{P}(u_1,\ldots, u_n)]$. We claim that $(w\tau) \epsilon^{\P}=(u_{1}\tau_{1}, \ldots, u_{n}\tau_{n})$. We have 
$$
((ww^{-1})\theta_i)\tau_i\le ((\oo{z}_{i}\oo{z}_{i}^{-1})\theta_i)\tau_{i}\le (z_iz_{i}^{-1})\tau_{i}=(u_iu_i^{-1})\tau_i
$$
Now, since $\alpha\mapright{w}\beta$ is a path in $\mathcal{P}(u_1,\ldots, u_n)$ we have $w\in L[\wh{\A}(u_i)]$, from which we deduce $w\theta_i\in L[\A(u_i)]$. Thus, since the inequality $((ww^{-1})\theta_i)\tau_i\le (u_iu_i^{-1})\tau_i$ holds, by Proposition~\ref{prop: isomorphic} we get $((ww^{-1})\theta_i)\tau_i= (u_iu_i^{-1})\tau_i$, and so since the inequality $(w\theta_i)\tau_i\ge u_i\tau_i$ is equivalent to $u_i\tau_i=(u_iu_i^{-1})\tau_i (w\theta_i)\tau_i$ we get our claim $u_i\tau_i=(w\theta_i)\tau_i$. Hence, $(w\tau)\epsilon_i^{\P}=u_i\tau_i$, i.e., $(w\tau) \epsilon^{\P}=(u_{1}\tau_{1}, \ldots, u_{n}\tau_{n})$. 
\end{proof}
We recall that an idempotent $e$ covers $g$ if $g<e$ and if there is no other idempotent $f\neq e, g$ such that $g\le f\le e$. 
We say that an inverse monoid $M$
has the \emph{finite covering property} if any idempotent $g\in E(M)$ satisfies the following conditions: 
\begin{itemize}
\item
there exist finitely many idempotents $e_1,\ldots,e_n$ covering $g$;
\item
any idempotent $f > g$ satisfies $f \geq e_i$ for some $i \in \{ 1,\ldots,n\}$.
\end{itemize}
Note that the property is satisfied if the interval $[g,1] = \{ f \in E(M) \mid g \leq f \}$ is finite for every $g \in E(M)$.

We have the following main result.
\begin{theorem}
With the above notation, let $M$ be an inverse monoid such that each $i$-th component $M_i$ has finite $\mathcal{R}$-classes. Suppose that
each $M_i$ has the finite covering property and for each $g$ it is possible to effectively calculate the covering idempotents $e_1,\ldots, e_m$ of $g$.
Then, the membership problem for ${\rm Im}\,\epsilon^{\P}$ is decidable.
\end{theorem}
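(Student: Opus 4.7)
The plan is to apply the combinatorial characterization of Proposition~\ref{prop: combinatorial characterization}: deciding whether a given tuple $(u_1\tau_1,\ldots,u_n\tau_n)$ lies in ${\rm Im}\,\epsilon^{\P}$ reduces to deciding, for each $i$, whether $\cl_{R\oo{\theta}_i}(\chi_{a_i}(\mathcal{P}(u_1,\ldots,u_n)))\cong \A(u_i)$. Since $M_i$ has finite $\mathcal{R}$-classes, each $\A(u_i)$ is finite and effectively constructible via Proposition~\ref{cons}; the product automaton $\mathcal{P}(u_1,\ldots,u_n)$ and its $a_i$-contraction $\chi_{a_i}(\mathcal{P}(u_1,\ldots,u_n))$ are then built directly.

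By Lemma~\ref{lem: closure steph}, the required isomorphism is equivalent to the existence of some $v\in L(\chi_{a_i}(\mathcal{P}))$ with $v\tau_i=u_i\tau_i$: Lemma~\ref{lem: lifting} already yields $L(\chi_{a_i}(\mathcal{P}))\subseteq L(\A(u_i))$, so every candidate $v$ satisfies $v\tau_i\ge u_i\tau_i$, and what has to be excluded is that every such $v$ yields a strict inequality.

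The key structural step is to show that the covers of $u_i\tau_i$ in the natural partial order of $M_i$ are exactly the elements $c$ with $cc^{-1}\tau_i$ covering $u_iu_i^{-1}\tau_i$ in $E(M_i)$ and $c\ge u_i\tau_i$. Indeed, $v\tau_i>u_i\tau_i$ forces $vv^{-1}\tau_i>u_iu_i^{-1}\tau_i$ (otherwise $u_i\,\mathcal{R}\,v$ together with $u_i\le v$ would give $u_i\tau_i=v\tau_i$), and by the finite covering property this idempotent lies above some cover $g$ of $u_iu_i^{-1}\tau_i$; a direct computation then shows that $gv\tau_i$ is a cover of $u_i\tau_i$ sitting below $v\tau_i$. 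Combined with the finiteness of each $\mathcal{R}$-class of $M_i$, this yields finitely many covers $c_1,\ldots,c_N$ of $u_i\tau_i$, all effectively enumerable: list the idempotent covers $g_j$ of $u_iu_i^{-1}\tau_i$ (given by hypothesis), build each finite $\A(g_j)$ to read off its $\mathcal{R}$-class, and retain those $c$ satisfying $c\ge u_i\tau_i$ (decidable via the word problem of $M_i$, which is solvable once Sch\"utzenberger automata are constructible).

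Finally, I would construct $\A(c_l)$ for each cover $c_l$ and decide the inclusion $L(\chi_{a_i}(\mathcal{P}))\subseteq\bigcup_{l=1}^{N}L(\A(c_l))$, which is decidable between rational languages by Proposition~\ref{rat}. Inclusion says every $v\in L(\chi_{a_i}(\mathcal{P}))$ satisfies $v\tau_i\ge c_l>u_i\tau_i$ for some $l$, ruling out the $i$-th component; a failure of inclusion produces a witness $v$ whose $\tau_i$-value is above $u_i\tau_i$ yet above no cover, forcing $v\tau_i=u_i\tau_i$. Running this test for every $i$ and applying Proposition~\ref{prop: combinatorial characterization} settles the membership problem: the tuple belongs to ${\rm Im}\,\epsilon^{\P}$ iff the inclusion fails for each $i$. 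The main obstacle is the structural ``covers factorise strict ascent'' step above; once that is in place, the finite covering property on idempotents (an a priori weaker condition than being finite-$\J$-above) suffices to pin down the closure via a rational-inclusion test.
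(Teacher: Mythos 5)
Your proposal is correct, and it rests on the same pillars as the paper's proof: Proposition~\ref{prop: combinatorial characterization} to reduce membership to the condition $\cl_{R\oo{\theta}_i}(\chi_{a_i}(\mathcal{P}(u_1,\ldots,u_n)))\cong \A(u_i)$ for every $i$, Proposition~\ref{cons} to build the finite automata, the covering hypothesis, and rational-language inclusion tests via Proposition~\ref{rat}. The difference lies in how the closure condition is decided. The paper works at the level of the idempotent $u_iu_i^{-1}$: it forms $\C_i$ (the contraction with final state reset to the initial state), tests $L(\C_i)\subseteq L(\A(u_iu_i^{-1}))$ and then tests whether the ``all-states-final'' language of $\C_i$ is contained in the union of the corresponding languages of the Sch\"utzenberger automata of the covering idempotents; a failure of that inclusion certifies $\cl_{R\oo{\theta}_i}(\C_i)=\A(u_iu_i^{-1})$, whence the closure is finite, can actually be computed, and a final membership check $u_i\in L(\cl_{R\oo{\theta}_i}(\chi_{a_i}(\mathcal{P})))$ finishes the test. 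You instead reformulate the condition, via Lemma~\ref{lem: closure steph} and Lemma~\ref{lem: lifting}, as the existence of $v\in L(\chi_{a_i}(\mathcal{P}))$ with $v\tau_i=u_i\tau_i$, and you detect this with a single inclusion test $L(\chi_{a_i}(\mathcal{P}))\subseteq\bigcup_l L(\A(c_l))$, where the $c_l$ are covers of $u_i\tau_i$ in the natural partial order of $M_i$; you never compute the closure. The structural step you flag is sound: if $v\tau_i>u_i\tau_i$ then $vv^{-1}\tau_i>u_iu_i^{-1}\tau_i$, so $vv^{-1}\tau_i\ge g$ for some covering idempotent $g$, and $gv\tau_i$ satisfies $u_i\tau_i\le gv\tau_i\le v\tau_i$ with $(gv)(gv)^{-1}\tau_i=g$, which forces $gv\tau_i$ to be a cover of $u_i\tau_i$; conversely every element of your candidate set is strictly above $u_i\tau_i$. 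What your route buys is a shorter decision procedure (one inclusion test per coordinate, no intermediate computation of $\cl_{R\oo{\theta}_i}(\C_i)$ and no separate membership check for $u_i$); what it costs is an extra use of the finite-$\R$-classes hypothesis, needed to enumerate the finitely many covers $c$ with $cc^{-1}$ a covering idempotent and $c\ge u_i\tau_i$ (reading representatives off the finite automata $\A(g_j)$ and deciding $c\ge u_i\tau_i$ via Proposition~\ref{order}), a point worth writing out in full since it is where your argument genuinely diverges from the paper's.
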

\begin{proof}
Let $(u_{1}\tau_{1}, \ldots, u_{n}\tau_{n})\in  \prod_{i=1}^m M_i$. Since each component $M_i$ has finite $\mathcal{R}$-classes, then by Proposition~\ref{cons} we may construct each Sch\"utzenberger automaton $\A(u_i)$ with respect to the presentation ${\rm Inv}\langle A_i \mid R\oo{\theta_i}\rangle$. Thus, we may effectively construct the product automaton 
$$
\mathcal{P}(u_1,\ldots, u_n)=\wh{\A}(u_1)\times \wh{\A}(u_2)\times \ldots \times \wh{\A}(u_n)
$$
where $\wh{\A}(u_i)$ is obtained from $\A(u_i)$ by adding all the loops $q\mapright{a_{i}}q$, $q\mapright{a_{i}^-1}q$ for each state $q$ of $\A(u_i)$. Now, by Proposition~\ref{prop: combinatorial characterization} it is enough to check if the following property
\begin{equation}\label{eq: cond to check}
\cl_{R\oo{\theta_i}}\left(\chi_{a_i}(\mathcal{P}(u_1,\ldots, u_n))\right)=\A(u_i)
\end{equation}
holds for each $i=1,\ldots, n$. Note that the automaton $\B_i=\chi_{a_i}(\mathcal{P}(u_1,\ldots, u_n))$ may also be effectively constructed. Let $\alpha$, $\alpha'$ be the initial states of $\A(u_i), \B_i$, respectively. Change the final state of $\A(u_i)$ to $\alpha$, in this way we are considering the Sch\"utzenberger automaton $\A(u_iu_i^{-1})$. Consider the automaton $\C_i$ obtained from $\B_i$ by taking as final state the initial state $\alpha'$. To check the condition expressed in equation (\ref{eq: cond to check}) it is enough to check whether both $\A(u_iu_i^{-1})=\cl_{R\oo{\theta_i}}(\C_i)$ and $u_i\in \cl_{R\oo{\theta_i}}\left(\chi_{a_i}(\mathcal{P}(u_1,\ldots, u_n))\right)$ hold. Indeed, if $\A(u_iu_i^{-1})$ is not isomorphic to $\cl_{R\oo{\theta_i}}(\C_i)$, then (\ref{eq: cond to check}) does not hold. Otherwise, if $\A(u_iu_i^{-1})=\cl_{R\oo{\theta_i}}(\C_i)$, since $\A(u_iu_i^{-1})$ is finite, then $\cl_{R\oo{\theta_i}}(\C_i)$ is finite, and in turn $\cl_{R\oo{\theta_i}}\left(\chi_{a_i}(\mathcal{P}(u_1,\ldots, u_n))\right)$ is also finite, and thus it may be effectively computed. Thus checking condition (\ref{eq: cond to check}) is now equivalent to check if $u_i\in \cl_{R\oo{\theta_i}}\left(\chi_{a_i}(\mathcal{P}(u_1,\ldots, u_n))\right)$, and this may be effectively checked. The following algorithms depending on the conditions stated in the theorem check whether the condition 
\begin{equation}\label{eq: 2 cond to check}
\A(u_iu_i^{-1})=\cl_{R\oo{\theta_i}}(\C_i)
\end{equation}
holds or not. 

Now take $(u_iu_i^{-1})\tau_i$ and compute all the idempotents $v_1\tau_i,\ldots, v_m\tau_i$ covering $(u_iu_i^{-1})\tau_i$. Then perform the following procedure:
\begin{itemize}
\item Check if $L(\C_i)\subseteq L(\A(u_iu_i^{-1}))$. If the answer is negative, then the algorithm exits by stating that equality (\ref{eq: cond to check}) does not hold. 
\item Otherwise, build all the Sch\"utzenberger automata $\A(v_j)=(Q_j, \alpha_j, \alpha_j, E_j)$ for all $j=1,\ldots, n$ and change the final states to get a new automaton $\mathcal{D}_j=(Q_j, \alpha_j, Q_j, E_j)$ with the property that $L(\mathcal{D}_j)=\{u: \alpha_j\mapright{u}p\mbox{ for some }p\in Q_j\}$. Consider also the automaton $\C_i'=(Q, q_0, Q, E)$ obtained from $\C_i=(Q,q_0,q_0,E)$ by changing the set of final states, in this way the automaton $\C_i'$ recognizes the language $\{u: q_0\mapright{u}p\mbox{ for some }p\in Q\}$. The algorithm now checks whether the following inclusion 
$$
L(\C'_i)\subseteq \bigcup_{j=1}^m L(\mathcal{D}_j)
$$
holds. If this last condition occurs, then the algorithm exits reporting that equality (\ref{eq: cond to check}) does not hold, otherwise we would have the contradiction $(u_iu_i^{-1})\tau_i\ge(v_jv_j^{-1})\tau_i $ for some $j=1,\ldots, m$. Otherwise, there is some $z\in L(\C'_i)\setminus (\bigcup_{j=1}^m L(\mathcal{D}_j))$. Therefore, since the word $zz^{-1}\in L(\C_i)\setminus (\bigcup_{j=1}^m L(\mathcal{D}_j))$, then we deduce that $(zz^{-1})\tau_i\ge (u_iu_i^{-1})\tau_i$ and $(zz^{-1})\tau_i<v_j\tau_i$ for all $j=1,\ldots, m$. Hence, $(zz^{-1})\tau_i= (u_iu_i^{-1})\tau_i$. Therefore, since $(zz^{-1})\tau_i= (u_iu_i^{-1})\tau_i$ and $L(\C_i)\subseteq L(\A(u_iu_i^{-1}))=\{v\in\wt{A}_i^*: v\tau_i\ge (u_iu_i^{-1})\tau_i\}$  by Lemma~\ref{lem: closure steph}  we may conclude that $\A(u_iu_i^{-1})=\cl_{R\oo{\theta_i}}(\C_i)$.
\end{itemize}
%
%
\end{proof}
Note that a free inverse monoid has finite $\mathcal{R}$-classes and satisfies the two conditions stated in the previous theorem. In what follows, we state some conditions on an inverse monoid $M$ in order for $M_i$ to have finite $\mathcal{R}$-classes. We first state the following general lemma. 
\begin{lemma}\label{lem: finiteness condition}
Let $\B$ be a finite inverse $\wt{A}$-automaton with initial state $\alpha$ and let $M=\wt{A}^{*}/\tau$ be an inverse monoid with presentation ${\rm Inv}\langle A \mid R\rangle$. Let $\alpha'$ be the corresponding initial state of $\cl_{R}(\B)$. If  $\B$ has the following property
$$
\{vv^{-1}\in\wt{A}^*: \alpha\mapright{vv^{-1}}\alpha \mbox{ is a path in }\B\}=\{vv^{-1}\in\wt{A}^*: \alpha'\mapright{vv^{-1}}\alpha' \mbox{ is a path in }\cl_{R}(\B)\}
$$
Then, $\cl_{R}(\B)$ is finite.
\end{lemma}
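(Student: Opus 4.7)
The plan is to show that the canonical morphism $\pi : \B \to \cl_{R}(\B)$ provided by the colimit construction is surjective on states; since $\B$ is finite, this gives $|\cl_{R}(\B)| \leq |\B| < \infty$.

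First I would observe that every state of $\cl_{R}(\B)$ is reachable from $\alpha'$. This holds because $\B$ is trim and hence connected from $\alpha$, and both elementary expansions and foldings preserve reachability from the initial state: expansions insert paths between already-reachable vertices (whose internal vertices then become reachable), while foldings only identify states. Hence $\cl_{R}(\B)$ is connected from $\alpha'$.

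Now let $q$ be an arbitrary state of $\cl_{R}(\B)$ and fix a path $\alpha' \mapright{u} q$ with $u \in \wt{A}^*$. Since $\cl_{R}(\B)$ is an inverse (in particular involutive) automaton, $q \mapright{u^{-1}} \alpha'$ is also a path, and concatenating yields the loop $\alpha' \mapright{uu^{-1}} \alpha'$. The hypothesis of the lemma then forces $\alpha \mapright{uu^{-1}} \alpha$ to be a path in $\B$. Because $\B$ is deterministic and involutive, this path factors uniquely as $\alpha \mapright{u} q' \mapright{u^{-1}} \alpha$ for some state $q'$ of $\B$. The morphism $\pi$ sends the edge-path $\alpha \mapright{u} q'$ to $\alpha' \mapright{u} \pi(q')$ in $\cl_{R}(\B)$, and determinism of $\cl_{R}(\B)$ forces $\pi(q') = q$.

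This establishes surjectivity of $\pi$ on states, and the finiteness of $\cl_{R}(\B)$ follows at once. The main subtlety is ensuring the existence of a canonical morphism $\pi : \B \to \cl_{R}(\B)$ with $\pi(\alpha) = \alpha'$ that respects paths; this is a standard property of Stephen's colimit construction, since $\B$ sits at the base of the directed system whose colimit is $\cl_{R}(\B)$, and elementary expansions/foldings induce morphisms of involutive $\wt{A}$-automata. Beyond this setup, the real content of the argument is the observation that, in a deterministic involutive automaton, the loops $\alpha \mapright{vv^{-1}} \alpha$ at the base point completely control which states are reachable from that point.
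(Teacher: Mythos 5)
Your proof is correct and is essentially the paper's argument in different packaging: the paper assigns to each state $q$ of $\cl_{R}(\B)$ a nonempty set $S_q$ of states of $\B$ that read a common word from $\alpha$, and shows these sets are pairwise disjoint, which is precisely the statement that your canonical morphism $\pi$ is a well-defined surjection on states. The key step is identical in both: close a path $\alpha'\mapright{u}q$ into the loop labelled $uu^{-1}$ and use the hypothesis to pull it back to $\B$, then count.
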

\begin{proof}
 For each state $q\in Q$ of $\cl_{R}(\B)$ we associate a subset $S_{q}$ formed by all the states $p$ of $\B$ such that $\alpha\mapright{u}p$ is a path in $\B$ whenever $\alpha\mapright{u} q$ is a path in $\cl_{R}(\B)$. Note that the property stated in the lemma ensures that $S_{q}\neq \emptyset $ for all the states $q$ in $\cl_{R}(\B)$. Furthermore, note that for all $q,q'$ with $q\neq q'$, $S_{q}\cap S_{q'}=\emptyset$. Indeed, assume contrary to our claim that there is a state $p\in S_{q}\cap S_{q'}$. Hence,  in $\cl_{R}(\B)$ there are paths $\alpha'\mapright{u} q$, $\alpha'\mapright{v} q'$, and in $\B$ we there are paths $\alpha\mapright{u} p$, $\alpha\mapright{v} p$. Hence, $\alpha\mapright{uv^{-1}}\alpha$ is a loop in $\B$, and since $\cl_{R}(\B)$ is the closure of $\B$, we have that $\alpha'\mapright{uv^{-1}}\alpha'$ is also a loop in $\cl_{R}(\B)$, i.e., $q=q'$, a contradiction. Therefore, the collection of subsets $S_{q}$, $q\in Q$ forms a partition of the set of states of $\B$ which is a finite set. Further, since $S_{q}\neq \emptyset$ for all $q\in Q$ we may conclude that $Q$ is finite, i.e., $\cl_{R}(\B)$ is finite. 
\end{proof}
Using the previous lemma it is possible to prove the following.
\begin{proposition}\label{prop: finite R-classes}
Assume that the inverse monoid $M$ has finite $\mathcal{R}$-classes. If for all $e\in E(M_i)$, the set of preimages $\{g\in M: \epsilon_i^{\P}(g)=e\}$ has a minimum idempotent, then $M_{i}$ has also finite $\mathcal{R}$-classes. 
\end{proposition}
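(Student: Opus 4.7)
Plan: Fix $u_i \in \wt{A_i}^*$ and put $e = (u_iu_i^{-1})\tau_i \in E(M_i)$. I aim to show that the $\mathcal{R}$-class of $u_i\tau_i$ in $M_i$ --- equivalently, the state set of the Sch\"utzenberger automaton $\A(u_i)$ --- is finite. By hypothesis, fix a minimum idempotent $g_0$ in $(\epsilon_i^{\P})^{-1}(e)$ and write $g_0 = (w_0w_0^{-1})\tau$ for some $w_0 \in \wt{A}^*$. Then $\epsilon_i^{\P}(g_0) = e$ forces $w_0\theta_i\,\mathcal{R}\,u_i$ in $M_i$, so by Proposition \ref{prop: isomorphic} the automata $\A(u_i)$ and $\A(w_0\theta_i)$ share the same underlying graph; it thus suffices to prove $\A(w_0\theta_i)$ is finite.

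By Proposition \ref{prop: sch as closure}, $\A(w_0\theta_i) \simeq \cl_{R\oo{\theta_i}}(\B)$ with $\B := \chi_{a_i}(\A(w_0))$. Since $M$ has finite $\mathcal{R}$-classes, $\A(w_0)$ is finite by Proposition \ref{cons}, and hence so is $\B$. The strategy is to apply Lemma \ref{lem: finiteness condition} to $\B$, which requires verifying that the sets of words of the form $vv^{-1}$ labeling loops at the respective base points of $\B$ and $\cl_{R\oo{\theta_i}}(\B)$ coincide.

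The forward inclusion is immediate from the canonical morphism of pointed inverse automata $\B \to \cl_{R\oo{\theta_i}}(\B)$. The reverse inclusion is the main obstacle, and it is precisely where the minimum idempotent hypothesis enters. Given $v \in \wt{A_i}^*$ labeling a loop at the base point of $\cl_{R\oo{\theta_i}}(\B) \cong \A(w_0\theta_i)$, Proposition \ref{prop: isomorphic} yields $(vv^{-1})\tau_i \geq e$. I will then form the idempotent $h := (vv^{-1}w_0w_0^{-1})\tau \in E(M)$; automatically $h \leq g_0$, and $\epsilon_i^{\P}(h) = (vv^{-1})\tau_i \cdot e = e$. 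Minimality of $g_0$ will force $h = g_0$, which is equivalent to $(vv^{-1})\tau \geq (w_0w_0^{-1})\tau$, so Proposition \ref{prop: isomorphic} applied to $\A(w_0w_0^{-1})$ gives that $vv^{-1}$ labels a loop at the base point $\alpha_0 = g_0$ of $\A(w_0)$. Since this loop uses only letters of $\wt{A_i}$, the natural morphism $\pi: \A(w_0) \to \B$ sends it to a loop at $\pi(\alpha_0)$ in $\B$ labeled $vv^{-1}$, as required.

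With both inclusions in place, Lemma \ref{lem: finiteness condition} yields that $\cl_{R\oo{\theta_i}}(\B) \cong \A(w_0\theta_i)$ is finite, hence so is $\A(u_i)$, completing the proof.
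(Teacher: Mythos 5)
Your proof is correct and follows essentially the same route as the paper: take a word representing the minimum idempotent in the preimage, contract its Sch\"utzenberger automaton, identify the closure via Proposition \ref{prop: sch as closure}, and use minimality to establish the nontrivial loop inclusion required by Lemma \ref{lem: finiteness condition}. The only differences are cosmetic (you reduce from $\A(u_i)$ to $\A(w_0\theta_i)$ at the start rather than at the end, and the finiteness of $\A(w_0)$ follows directly from the finite $\mathcal{R}$-class assumption rather than from Proposition \ref{cons}, which concerns constructibility).
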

\begin{proof}
Let $u\tau_i\in M_i$ we show that the Sch\"utzenberger automaton $\A(u)$ is finite by showing that $\A(uu^{-1})$ is finite. Consider the set $((uu^{-1})\tau_i) \p_i^{-1}$ of the preimages of the idempotent $(uu^{-1})\tau_i$. By the hypothesis in the statement, there is $x\in \wt{A}^*$ such that $(xx^{-1})\tau$ is minimum among the idempotents of the preimage $\{g: \epsilon_i^{\P}(g)=(uu^{-1})\tau_i\}$, i.e., $(zz^{-1})\tau \ge (xx^{-1})\tau$ for all $(zz^{-1})\in ((uu^{-1})\tau_i) \p_i^{-1}$. Now, consider the Sch\"utzenberger automaton $\A_A(xx^{-1})$ with respect to {\rm Inv}$\la A|R\ra$, and consider the corresponding $a_i$-contracted automaton $\B=\chi_{a_i}(\A_A(xx^{-1}))$. By Proposition~\ref{prop: sch as closure} we have $\cl_{R\oo{\theta_i}}(\B)$ is isomorphic to the Sch\"utzenberger automaton $\A((xx^{-1})\theta_i)$ with respect to ${\rm Inv}\langle A_{i} \mid R\oo{\theta}_i\rangle$. We claim that the following inclusion
\begin{equation}\label{eq: inclusion}
\{vv^{-1}: \alpha'\mapright{vv^{-1}}\alpha' \mbox{ is a path in }\cl_{R\oo{\theta_i}}(\B)\}\subseteq \{vv^{-1}: \alpha\mapright{vv^{-1}}\alpha \mbox{ is a path in }\B\}
\end{equation}
holds. Indeed, for any path $ \alpha'\mapright{vv^{-1}}\alpha'$ in $\cl_{R\oo{\theta_i}}(\B)=\A((xx^{-1})\theta_i)$ we may deduce that $(vv^{-1})\tau_i\ge ((xx^{-1})\theta_i)\tau_i$. Hence, we get the equality $(vv^{-1}(xx^{-1})\theta_i)\tau_i=((xx^{-1})\theta_i)\tau_i$. Now, since we have $(xx^{-1})\theta_i\in ((uu^{-1})\tau_i) \p_i^{-1}$, we also deduce that $(vv^{-1}(xx^{-1})\theta_i)\in ((uu^{-1})\tau_i) \p_i^{-1}$ holds. Thus, by the minimality of the idempotent $(xx^{-1})\tau$ we get $(vv^{-1}(xx^{-1})\theta_i)\tau \ge (xx^{-1})\tau$. Hence, there is a path $\alpha\mapright{vv^{-1}}\alpha$ in $\A_A(xx^{-1})$, and consequently there is also a path $\alpha\mapright{vv^{-1}}\alpha$ in $\B=\chi_{a_i}(\A_A(xx^{-1}))$, and this concludes the proof of the inclusion (\ref{eq: inclusion}). Now, since also the other inclusion holds, we get the equality
$$
\{vv^{-1}: \alpha'\mapright{vv^{-1}}\alpha' \mbox{ is a path in }\cl_{R\oo{\theta_i}}(\B)\}= \{vv^{-1}: \alpha\mapright{vv^{-1}}\alpha \mbox{ is a path in }\B\}
$$
and so by Lemma~\ref{lem: finiteness condition} we conclude that $\A((xx^{-1})\theta_i)$ is finite. Thus, since $((xx^{-1})\theta_i)\tau_i=(uu^{-1})\tau_i$ we may deduce that $\A(uu^{-1})$ is also finite. 
\end{proof}

\subsubsection{Kernel and connections with fragile words}

We consider now the congruence
$$\ker \epsilon^{\P} = \{ (x,y) \in \wt{A}^*/\tau \times \wt{A}^*/\tau\mid x\epsilon^{\P} = y\epsilon^{\P} \},$$
the {\em kernel} of $\epsilon^{\P}$. 

Now congruences on inverse monoids are usually described by the so-called kernel and trace. To avoid confusion with the kernel of an endomorphism, we shall use the notation $K(\tau)$ for the kernel of a congruence $\tau$ on an inverse monoid $M$, that is,
$$K(\tau) = \{ u \in M \mid u \,\tau\, u^2 \}.$$
On the other hand, the trace of $\tau$ is the restriction of $\tau$ to $E(M)$.

Let $\sigma_A$ denote the canonical free group congruence on $\wt{A}^*$ and let $\pi_A:FIM_A \to FG_A$ be the canonical epimorphism.

\begin{proposition}
\label{kefr}
Let $A = \{ a_1,\ldots,a_m\}$ and let $\epsilon^{\P}:FIM_A \to \prod_{i=1}^m FIM_{A_i}$ be the homomorphism induced by the canonical presentation ${\rm Inv}\langle A \mid \emptyset \rangle$. Let $w \in \wt{A}^*$.
Then $w\rho_A \in K({\rm ker}\,\epsilon^{\P})$ if and only if $w\sigma_A$ is a fragile word.
\end{proposition}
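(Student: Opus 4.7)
The plan is to unwind the definition of $K(\ker \epsilon^{\P})$ into a condition about idempotency in the codomain, and then use the standard characterization of idempotents in a free inverse monoid to translate that into a statement about the free group images of the components.

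First, I would observe that, by definition of the kernel of a congruence on an inverse monoid, $w\rho_A \in K(\ker \epsilon^{\P})$ means $(w\rho_A)\,(\ker \epsilon^{\P})\,(w\rho_A)^2$, which is equivalent to $(w\rho_A)\epsilon^{\P} = ((w\rho_A)\epsilon^{\P})^2$, i.e.\ to $(w\rho_A)\epsilon^{\P}$ being an idempotent of $\prod_{i=1}^m FIM_{A_i}$. Since idempotency in a direct product is componentwise, this is in turn equivalent to each $(w\theta_i)\rho_{A_i}$ being an idempotent of $FIM_{A_i}$, for $i = 1,\ldots,m$.

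Next, I would invoke the well-known characterization of idempotents in a free inverse monoid through the Munn tree: an element $u\rho_B$ of $FIM_B$ is idempotent if and only if the initial and terminal vertices of $\mt(u)$ coincide, which, since $\mt(u)$ is a tree and the reduced word representing $u$ in $FG_B$ labels the unique reduced path between these two vertices, is equivalent to $\pi_B(u\rho_B) = 1$ in $FG_B$, i.e.\ to $u\sigma_B = 1$. Applying this to each coordinate, $(w\theta_i)\rho_{A_i}$ is idempotent in $FIM_{A_i}$ if and only if $(w\theta_i)\sigma_{A_i} = 1$ in $FG_{A_i}$.

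Finally, I would check that the last condition matches the definition of fragility. The free-group eraser $\epsilon_{a_i}:FG_A \to FG_{A_i}$ sends $w\sigma_A$ to the element obtained by deleting all occurrences of $a_i$ and $a_i^{-1}$ from $w$, which is precisely $(w\theta_i)\sigma_{A_i}$ (since $\theta_i$ and the free-group deletion homomorphism agree on $\wt{A}^*$). Hence, requiring this to be trivial for every $i = 1,\ldots,m$ is exactly the statement that $w\sigma_A$ lies in the kernel of the free-group eraser morphism $\epsilon$, i.e.\ that $w\sigma_A$ is a fragile word. Chaining the three equivalences completes the argument.

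There is no serious obstacle: the proof is essentially a bookkeeping exercise once the idempotent/free-group characterization in $FIM_B$ is invoked. The only mild point worth being careful about is the convention on whether the identity is counted as a fragile word; since the congruence $K(\ker \epsilon^{\P})$ contains $1\rho_A$ (as $1\sigma_A = 1$ trivially satisfies each $(w\theta_i)\sigma_{A_i} = 1$), the statement is to be read with the convention that the identity is included among the ``fragile'' elements, consistent with the later identification $Fr_n = \ker \epsilon$ in the free-group setting.
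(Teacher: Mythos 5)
Your argument is correct and is essentially the paper's proof, reorganized as a single chain of equivalences through the characterization $E(FIM_{A_i}) = 1\pi_{A_i}^{-1}$: the paper splits the statement into two implications and routes the forward one through the commutative square $\epsilon^{\P}(\pi_{A_1}\times\cdots\times\pi_{A_m}) = \pi_A\epsilon^{\P'}$ together with the fact that the image of $\epsilon^{\P'}$ is a group, but the underlying facts (idempotency is componentwise, and idempotents of a free inverse monoid are exactly the preimage of $1$ in the free group) are the same. Your closing remark is also apt: since the paper's definition of fragile words excludes the identity, the proposition is indeed to be read with the convention that $1$ counts as fragile, i.e.\ with $Fr_n$ replaced by $\ker\epsilon$, exactly as you note.
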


\begin{proof}
Let $\P' = {\rm Inv}\langle A \mid aa\inv = 1\; (a \in \wt{A}) \rangle$ be the canonical inverse monoid presentation defining $FG_A$. The canonical epimorphisms $\pi_{A_i}$ $(i = 1,\ldots,m)$, induce in the obvious way an epimorphism
$$\pi_{A_1} \times \ldots \times \pi_{A_m}:FIM_{A_1} \times \ldots \times FIM_{A_m} \to FG_{A_1} \times \ldots \times FG_{A_m}.$$
We claim that
\beq
\label{kefr1}
\epsilon^{\P}(\pi_{A_1} \times \ldots \times \pi_{A_m}) = \pi_{A}\epsilon^{\P'}.
\eeq

Indeed, let $w \in \wt{A}^*$. Then 
$$\begin{array}{lll}
(w\rho_A)\epsilon^{\P}(\pi_{A_1} \times \ldots \times \pi_{A_m})&=&((w\theta_1)\rho_{A_1}, \ldots, (w\theta_m)\rho_{A_m})(\pi_{A_1} \times \ldots \times \pi_{A_m})\\
&=&((w\theta_1)\sigma_{A_1}, \ldots, (w\theta_m)\sigma_{A_m}) = (w\sigma_A)\epsilon^{\P'}\\
&=&(w\rho_A)\pi_{A}\epsilon^{\P'}
\end{array}$$
and so (\ref{kefr1}) holds.

Assume that $w\rho_A \in K({\rm ker}\,\epsilon^{\P})$. Then $(w\rho_A)\epsilon^{\P} = (w^2\rho_A)\epsilon^{\P}$ and in view of (\ref{kefr1}) we get 
$$(w\sigma_A)\epsilon^{\P'} = (w\rho_A)\pi_{A}\epsilon^{\P'} = (w^2\rho_A)\pi_{A}\epsilon^{\P'} = (w^2\sigma_A)\epsilon^{\P'}.$$
Since $\im \epsilon^{\P'}$ is a group, we get $(w\sigma_A)\epsilon^{\P'} = 1$ and so $w\sigma_A$ is a fragile word.

Conversely, assume that $w\sigma_A$ is a fragile word, i.e. $(w\sigma_A)\epsilon^{\P'} = 1$. In view of (\ref{kefr1}), we get $((w\theta_i)\rho_{A_i})\pi_{A_i} = 1$ for $i = 1,\ldots,m$. But it is well known that $1\pi_{A_i}\inv = E(FIM_{A_i})$, hence $(w\theta_i)\rho_{A_i} \in E(FIM_{A_i})$ for $i = 1,\ldots,m$. Thus
$$(w\rho_A)\epsilon^{\P} = ((w\theta_1)\rho_{A_1}, \ldots, (w\theta_m)\rho_{A_m}) = ((w^2\theta_1)\rho_{A_1}, \ldots, (w^2\theta_m)\rho_{A_m}) = (w^2\rho_A)\epsilon^{\P}$$
and so $w\rho_A \in K({\rm ker}\,\epsilon^{\P})$ as required.
\end{proof}


\subsection{Acknowledgments}
We would like to thank Enric Ventura for helpful conversations in which he conjectured that Theorem \ref{T1} holds, and Montse Casals-Ruiz for pointing out the reference \cite{BHMS} which allowed us to prove Theorem \ref{T2}.

The first author thanks Austrian Science Fund project FWF P29355-N35.
The third author was partially supported by CMUP (UID/MAT/00144/2019), which is funded by FCT (Portugal) with national (MCTES) and European structural funds through the programs FEDER, under the partnership agreement PT2020.
The last author was partially supported by the ERC Grant 336983, by the Basque Government grant IT974-16, by the grants MTM2014-53810-C2-2-P and MTM2017-86802-P of the Ministerio de Economia y Competitividad of Spain, and by the grant 346300 for IMPAN from the Simons Foundation and the matching 2015-2019 Polish MNiSW fund.

\end{document}